\newtheorem{thm}{Theorem}[section]
\newtheorem{cor}[thm]{Corollary}
\newtheorem{lem}[thm]{Lemma}
\newtheorem{prop}[thm]{Proposition}
\theoremstyle{definition}
\newtheorem{df}[thm]{Definition}
\theoremstyle{remark}
\newtheorem{rem}[thm]{Remark}
\newtheorem{nota}[thm]{Notation}
\numberwithin{equation}{section}
\newcommand{\R}{\mathbb R}
\newcommand{\indic}{1\!\!1}
\newcommand{\dd}{\mathrm{d}}
\DeclareMathOperator{\supp}{supp}
\author{Charles Collot}
\address{CNRS \& CY Cergy Paris Universit\'e,
 Laboratoire Analyse, G\'eom\'etrie et Mod\'elisation (AGM),
2 avenue Adolphe Chauvin, 95300, Pontoise, France}
\email{ccollot@cyu.fr}
\author{Helge Dietert}
\address{Universit\'e Paris Cité and Sorbonne Universit\'e, CNRS,
  Institut de Math\'ematiques de Jussieu-Paris Rive Gauche (IMJ-PRG),
  F-75013, Paris, France}
\email{helge.dietert@imj-prg.fr}
\author{Pierre Germain}
\address{Courant Institute of Mathematical Sciences, New York University,
251 Mercer Street, New York, NY 10003, United States of America.}
\email{pgermain@cims.nyu.edu}
\keywords{Kinetic wave equation, Kolmogorov-Zakharov spectra, Stability, Wave turbulence, Non-equilibrium steady states}
\subjclass[2020]{primary 35Q20, 35B35, 45G05, secondary 82C10}
\title[Stability for the Kolmogorov-Zakharov spectrum]{Stability and
  cascades for the Kolmogorov-Zakharov spectrum of wave
  turbulence}
\begin{document}

\begin{abstract}
  We consider the kinetic wave equation arising in wave turbulence to describe
  the Fourier spectrum of solutions to the cubic Schr\"odinger
  equation. This equation has two Kolmogorov-Zakharov steady states
  corresponding to out-of-equilibrium cascades transferring, for the
  first solution mass from \(\infty\) to \(0\) (small spatial scales
  to large scales), and for the second solution energy from \(0\) to
  \(\infty\). After conjecturing the generic development of the two
  cascades, we verify it partially in the isotropic case by proving
  the nonlinear stability of the mass cascade in the stationary
  setting. This constructs non-trivial out-of-equilibrium steady
  states with a direct energy cascade as well as an indirect mass
  cascade.
\end{abstract}

\maketitle

\section{The Kinetic wave equation and turbulence}

\subsection{The equation and its explicit solutions}

The kinetic wave equation for 4-wave interactions is in the isotropic
case
\begin{equation}
\label{KWE}
\partial_t f =  \mathcal{C}(f),
\end{equation}
where the collision operator is given by
\begin{equation}
  \label{collisionoperator}
  \mathcal{C}(f)(\omega_1)
  = \iint_{\omega_2,\omega_3,\omega_4 \geq 0}
  W [(f_1 + f_2)f_3 f_4 - (f_3 + f_4) f_1 f_2] \, \dd\omega_3 \,\dd\omega_4.
\end{equation}
Here, the distribution \(f\) is expressed in terms of the dispersion
relation $\omega (k)=|k|^2$, where $k$ is the momentum. Moreover, we
use the shorthand $f_i = f(t,\omega_i)$ and the notations
\begin{equation*}
  \omega_2 = \omega_3 + \omega_4 - \omega_1,
  \qquad \text{and} \qquad
  W = \frac{\min( \sqrt{\omega_1}, \sqrt{\omega_2}, \sqrt{\omega_3}, \sqrt{\omega_4})}{\sqrt{\omega_1}}.
\end{equation*}

The structure is similar to the Boltzmann equation in kinetic theory,
where the collision operator is quadratic compared to the cubic
interaction in \eqref{collisionoperator}. Like the Boltzmann collision
operator, the evolution preserves formally the mass and energy given by
\begin{equation}
  \label{eq:mass-energy-integral}
  M(f) = \int_0^\infty  f(t,\omega)\, \omega^{1/2} \,\dd\omega
  \qquad \text{and} \qquad
  E(f) = \int_0^\infty  f(t,\omega)\, \omega^{3/2} \,\dd\omega.
\end{equation}
As an analogue of the Boltzmann H-theorem, the
entropy
\begin{equation*}
  S(f) = \int_0^\infty \log(f)\, \omega^{1/2} \,\dd\omega
\end{equation*}
is formally increasing as
\begin{equation*}
  \frac{\dd}{\dd t} S(f) =  \frac 14
  \iint
  \min (\sqrt{\omega_1},\sqrt{\omega_2},\sqrt{\omega_3},\sqrt{\omega_4})
  f_1f_2f_3f_4 \left(
    \frac{1}{f_1}+\frac{1}{f_2}-\frac{1}{f_3}-\frac{1}{f_4}\right)^2
  \,\dd\omega_1\,\dd\omega_3\,\dd\omega_4.
\end{equation*}

\medskip

Turning to formal stationary solutions, the first class is the
Rayleigh-Jeans (RJ) solutions $f(k) = \frac{1}{\alpha + \beta \omega}$
with $\alpha, \beta>0$ where the collision integral vanishes as
$(f_1 + f_2)f_3 f_4 - (f_3 + f_4) f_1 f_2=0$. They correspond to
statistical equilibria giving equipartition of a linear combination of
mass and energy.

More interesting are the Kolmogorov-Zakharov (KZ) solutions which
correspond to out-of-equilibrium dynamics. They are given by $\omega^{-7/6}$ and $\omega^{-3/2}$,
see Subsection \ref{basiccancellation}, and correspond respectively to a constant flux of mass (particles)
from infinite to zero frequency, and a constant flux of energy from
zero to infinite frequency.

Among all these stationary solutions, the KZ spectrum $\omega^{-7/6}$
is a well-defined solution, while $\omega^{-3/2}$ and the RJ solutions
are only formal solutions as the collision integral is not absolutely
converging. Details regarding the well-definition of the collision integral are given in Section \ref{sec:criticalpowers}.

\subsection{Wave turbulence and dual cascade}

The classical theory of two-dimensional hydro\-dynamic
turbulence~\cite{KM,frisch-1995-turbulence} features a dual cascade:
kinetic energy is transported to smaller frequencies, and enstrophy to
higher frequencies. This behavior is different from three-dimensional
hydro\-dynamic turbulence, and is explained by the two conservation
laws associated to the two-dimensional Euler equation: energy and
enstrophy.

Wave turbulence follows a similar pattern: when the microscopic
model has two conserved quantities, it is expected that the associated
wave kinetic equation will exhibit a dual
cascade~\cite{ZLF,nazarenko-2011-wave}. This is typically the case for
problems whose Hamiltonian only contain even terms, for instance the
MMT toy model~\cite{ZDP},
elasticity~\cite{duering-josserand-rica-2017-wave}, gravitational
waves~\cite{GN}, or the nonlinear Schr\"odinger
equation~\cite{zakharov-1972-langmuir,dyachenko-newell-pushkarev-zakharov-1992-optical},
which is the focus of the present article.

Still in the class of problems exhibiting dual cascades, the (gravity)
water-wave equation has played a very important role in the
development of kinetic wave theory, see~\cite{NL} for a review. It is
speculated
in~\cite{komen-hasselmann-hasselmann-1984-existence-fully-developed-wind-sea-spectrum,Zakharov1}
that steady solutions of the associated forced kinetic wave equation
should display tails at low and high frequencies corresponding to the
inverse and direct cascades, respectively.

\subsection{Proposed dual cascade scenario}

Based on the preceding discussion, we expect that steady solutions
of~\eqref{KWE} will connect the solution $\omega^{-7/6}$ corresponding
to an inverse cascade of mass, with the solution $\omega^{-3/2}$
corresponding to a (direct) energy cascade. It is tempting to propose
the following scenario for the forced kinetic wave equation; we choose
for simplicity in this discussion the force $\phi$ to be smooth and
compactly supported on $(0,\infty)$.

\medskip
\noindent \textit{Static ideal scenario:} The equation $-\mathcal{C}(f)
= \phi$ admits stationary solutions $f$ with tails $\omega^{-7/6}$ and
$\omega^{-3/2}$, as $\omega \to 0$ and $\infty$ respectively. These
tails correspond to outgoing fluxes of mass and energy which
equilibrate the input of these quantities through $\phi$.

\medskip
\noindent \textit{Dynamic ideal scenario:} Solutions of the dynamical
problem $\partial_t f = \mathcal{C}(f) + \phi$ with arbitrary data
converge to one such stationary solution.
\medskip

In the present article, we partially validate the static scenario by
proving the existence of stationary solutions in a neighborhood of
$\omega^{-7/6}$ (mass cascade).

Establishing rigorously the validity of the full ideal scenario seems
very challenging. Compared to common studies in kinetic theory, we
work in \emph{out-of-equilibrium} dynamics so that variational
formulations seem not to apply. Furthermore, the equation is nonlocal,
and the structure of the collision operator is, as we shall see, more
intricate than may seem.

As a side remark, let us mention the Smoluchowski coagulation equation
\cite{wattis-2006,escobedo-mischler-2006-dust-smoluchowski}, which
describes the evolution of dust particles that can stick
together. There the appearance of a flux towards infinity is
well-understood and known as \emph{gelation}. However, the
corresponding collision integral is just quadratic and the evolution
only supports one cascade.

\subsection{Main result}

For any mass flux \(j_M\), we define the scaled Kolmogorov-Zakharov spectrum
$$
\mathfrak f_{j_M}= \left(\frac{j_M}{j_M^*}\right)^{1/3}\frac{1}{ \omega^{7/6}}
$$
where $j_M^*>0$ is a positive constant given by \eqref{id:KZflux}. It
is a steady state solution of \eqref{KWE} as
\(C( \mathfrak f_{j_M})(\omega)=0\) for all \(\omega >0\); a short
proof being given in \cref{lem:KZisstationary}.  The time variations
of the mass and energy densities present an inverse cascade of mass
with a source at $\infty$ and a sink at $0$ of $j_M$ mass per unit of
time, and no cascade of energy:
\begin{align}
  \label{id:massKZ}&- \omega^{1/2} \mathcal C( \mathfrak f_{j_M})=j_M \delta_\infty-j_M \delta_0 \\
  \nonumber & - \omega^{3/2} \mathcal C( \mathfrak f_{j_M})=0.
\end{align}
see \cref{def:mathcalCweak} and \eqref{def:diractdeltas} for the
precise meaning of these equalities.

Our main result shows that this solution is stable if one perturbs the
right-hand side of \eqref{id:massKZ} by adding a source term of finite
mass, and that this also triggers a direct cascade of energy towards
$\infty$.

We formulate the result in terms of the weighted \(L^\infty\) norm
\begin{equation}
\label{defEalphabeta}
  \| f \|_{\alpha,\beta}
  = \sup_{0<\omega<1} \omega^{\alpha} |f(\omega)|
  + \sup_{\omega>1}  \omega^{\beta} |f(\omega)|
\end{equation}
and denote \(E_{\alpha,\beta}(\R_+)\) the corresponding Banach space.

\begin{thm} \label{thm:main}

For all $0<\delta< 1/12$ there exists $\epsilon>0$  such that the
  following holds true. For all $j_M^\infty>0$ and
  $\phi \in L^\infty_{loc}(\mathbb{R}_+)$ satisfying
  \begin{equation} \label{th:id:massinput}
    \| \phi \|_{3/2-\delta,3/2+\delta} \leq \epsilon j_M^\infty,
  \end{equation}
  there exists a solution $f=  \mathfrak f_{j_M^\infty}+g$ to
  \begin{equation*}
   - \mathcal{C}(f)(\omega) =  \phi(\omega), \qquad \forall \omega>0
      \end{equation*}
      with
  $$
  f \geq 0 \qquad \mbox{and} \qquad
  \| g \|_{7/6,7/6+\delta} \lesssim \epsilon j_M^\infty.
  $$

  Furthermore, it satisfies:
  \begin{enumerate}[(i)]
  \item \emph{Stability estimates}. for all $\omega>0$,
    \begin{equation}     \label{th:bd:stability}
      f(\omega)=
      \begin{cases}
        \left(\frac{j_M^0}{j_M^*}\right)^{1/3} \omega^{-7/6}\left(1+O(\epsilon \omega^{\delta})\right)
        & \mbox{for } \omega\leq 1,\\
         \left(\frac{j_M^\infty}{j_M^*}\right)^{1/3} \omega^{-7/6} \left(1+O(\epsilon \omega^{-\delta})\right)
        &\mbox{for }\omega> 1,
      \end{cases}
    \end{equation}
    with the mass balance

    \begin{equation}       \label{th:id:massbalance}
      j_M^0= j_M^\infty + \int_0^\infty \omega^{1/2}\phi\, \dd\omega.
    \end{equation}
  \item \emph{Mass and energy cascades.} In the sense of \cref{def:mathcalCweak},
   \begin{align}
      \label{th:id:massflux}
    -  \omega^{1/2} \mathcal C(f)
      &= \omega^{1/2}\phi+j_M^\infty \delta_\infty-j_M^0 \delta_0 \\
      \label{th:id:energyflux}
    -  \omega^{3/2}\mathcal C(f)
      &= \omega^{3/2}\phi- \left(\int_0^\infty \tilde \omega^{3/2}\phi\, \dd\tilde \omega\right)\delta_\infty,
    \end{align}
where the second identity holds
    provided
    $\int \tilde \omega^{3/2}\phi=\lim_{R\to \infty}\int_0^R \tilde
    \omega^{3/2}\phi$ exists. Its mass and energy fluxes, defined by
    \eqref{def:JM} and \eqref{def:JE}, satisfy:
    \begin{equation} \label{th:id:fluxes}
      J_M(f)(\omega)= - j_M^0+\int_0^\omega \tilde \omega^{1/2}\phi(\tilde
      \omega)\,\dd\tilde \omega
      \quad \text{and} \quad
      J_E(f)(\omega)=\int_0^\omega \tilde \omega^{3/2}\phi(\tilde
      \omega) \,\dd\tilde \omega .
    \end{equation}
  \end{enumerate}
\end{thm}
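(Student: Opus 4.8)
The plan is a perturbative fixed-point construction around $\mathfrak f_{j_M^\infty}$, whose crucial ingredient is the spectral analysis of the linearised collision operator; the rest is a contraction argument. Because $W$ is $0$-homogeneous and $\dd\omega_3\,\dd\omega_4$ scales like $\omega^2$, one has $\mathcal{C}(\mu f(\lambda\,\cdot))(\omega)=\mu^3\lambda^{-2}\mathcal{C}(f)(\lambda\omega)$, which I would use first to normalise $j_M^\infty$ to a fixed value (rescaling $\phi$, whose norm in \eqref{th:id:massinput} is scale-adapted). Writing $f=\mathfrak f_{j_M^\infty}+g$, expanding the cubic operator around $\mathfrak f_{j_M^\infty}$, and using $\mathcal{C}(\mathfrak f_{j_M^\infty})=0$ (\cref{lem:KZisstationary}), the equation turns into
\begin{equation*}
  \mathcal{L}g=\phi+\mathcal{Q}(g),\qquad
  \mathcal{L}:=-D\mathcal{C}(\mathfrak f_{j_M^\infty}),\qquad
  \mathcal{Q}(g):=-\mathcal{C}(\mathfrak f_{j_M^\infty}+g)+\mathcal{C}(\mathfrak f_{j_M^\infty})+\mathcal{L}g,
\end{equation*}
where $\mathcal{Q}$ collects the terms quadratic and cubic in $g$. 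A scaling count shows that $\mathcal{C}$ sends weight $7/6$ to weight $3/2$, hence $\mathcal{L}$ raises the decay exponent by exactly $\tfrac13$; this is what matches the setting $g\in E_{7/6,7/6+\delta}$ against $\phi,\mathcal{Q}(g)\in E_{3/2-\delta,3/2+\delta}$.

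\emph{The linearised operator.} Since $\mathfrak f_{j_M^\infty}$ is scale-covariant, $\mathcal{L}$ acts diagonally on the Mellin transform — up to the singular contributions discussed in \cref{sec:criticalpowers} — namely $\mathcal{L}(\omega^{-s})=L(s)\,\omega^{-s-1/3}$ for an explicit meromorphic symbol $L$ obtained via a Zakharov-type change of variables. Its zeros are the critical exponents: the exponent $7/6$ is one of them, because differentiating $\mathcal{C}(\mathfrak f_{j_M})=0$ in $j_M$ gives $\partial_{j_M}\mathfrak f_{j_M}\propto\omega^{-7/6}\in\ker\mathcal{L}$; the conservation of mass and energy contributes more, through $\ker\mathcal{L}^*\ni\omega^{1/2},\omega^{3/2}$; and the complete list — together with the fact that $\delta<1/12$ is exactly the margin putting $7/6$, $7/6+\delta$, $3/2-\delta$, $3/2+\delta$ strictly between consecutive critical exponents — is worked out in \cref{sec:criticalpowers}. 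The main work is to prove a quantitative lower bound $|L(s)|\gtrsim1$ on the relevant vertical lines, plus enough decay of $L$ at infinity, so that $1/L$ defines a multiplicative convolution acting boundedly on the weighted $L^\infty$ norms; this provides a bounded right inverse $\mathcal{L}^{-1}$ between the corresponding spaces. This spectral analysis is the real obstacle.

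\emph{Resonance and fixed point.} The resonance at the exponent $7/6$ means $\mathcal{L}$ has, near $\omega=0$, a one-dimensional kernel $\R\,\omega^{-7/6}$ and, dually, a one-dimensional solvability obstruction given by the mass functional $h\mapsto\int h\,\omega^{1/2}\,\dd\omega$. I would resolve this by a Lyapunov–Schmidt reduction whose free parameter is the outgoing mass flux at $0$, i.e.\ $j_M^0$: set $j_M^0=j_M^\infty+\int_0^\infty\omega^{1/2}\phi\,\dd\omega$ (the integral converges and is $O(\epsilon j_M^\infty)$ by \eqref{th:id:massinput}) and write $g=(\mathfrak f_{j_M^0}-\mathfrak f_{j_M^\infty})\,\eta+h$ with $\eta$ a smooth cutoff, $\eta\equiv1$ near $0$, $\eta\equiv0$ near $\infty$, so that $f=\mathfrak f_{j_M^0}$ near $0$ and $f=\mathfrak f_{j_M^\infty}$ near $\infty$ up to the remainder $h$. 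The choice of $j_M^0$ is precisely what makes the effective source for $h$ mass-orthogonal, so that $h$ can be found in a space decaying strictly faster than $\omega^{-7/6}$ at both ends (weight $7/6-\delta$ at $0$, $7/6+\delta$ at $\infty$), away from the resonance; the energy is automatically accommodated because the weight $7/6+\delta$ at $\infty$ is energy-divergent, which is exactly the direct energy cascade. One then runs a contraction: $\mathcal{Q}$ is at least quadratic, and the trilinear collision form is bounded on these weighted spaces — the point at which absolute convergence of the collision integrals near $\omega_i=0$, near $\omega_i=\infty$, and against $W$ must be verified — so $h\mapsto\mathcal{L}^{-1}(\text{source}+\mathcal{Q})$ contracts on a ball of radius $\lesssim\epsilon j_M^\infty$ once $\epsilon$ is small.

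\emph{Positivity and cascades.} The fixed point gives the solution $f=\mathfrak f_{j_M^\infty}+g$ with $\|g\|_{7/6,7/6+\delta}\lesssim\epsilon j_M^\infty$; positivity $f\ge0$ then follows for $\epsilon$ small, since on a fixed $[a,A]$ one has $\mathfrak f_{j_M^\infty}\ge c>0$ and $|g|\lesssim\epsilon j_M^\infty$, near $0$ one has $f=\mathfrak f_{j_M^0}(1+O(\epsilon\omega^\delta))$ with $j_M^0>0$, and near $\infty$ one has $f=\mathfrak f_{j_M^\infty}(1+O(\epsilon\omega^{-\delta}))$ — which are \eqref{th:bd:stability}, and \eqref{th:id:massbalance} is the definition of $j_M^0$. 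Finally, testing $-\mathcal{C}(f)=\phi$ against $\omega^{1/2}$ and $\omega^{3/2}$ in the weak sense of \cref{def:mathcalCweak}, integrating, and evaluating the boundary terms at $0$ and $\infty$ from the now-known tails of $f$ yields \eqref{th:id:massflux}--\eqref{th:id:energyflux} and the flux formulas \eqref{th:id:fluxes}: the $\omega^{-7/6}$ tail at $0$ carries mass flux $-j_M^0$ and no energy flux, while the $\omega^{-7/6-\delta}$ tail at $\infty$ absorbs all the injected energy $\int\omega^{3/2}\phi$ into $\delta_\infty$, whenever this improper integral exists.
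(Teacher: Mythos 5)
Your overall architecture (perturb around the KZ profile, invert the linearised operator, which after the change of variables $\omega=e^x$ is a convolution/Mellin multiplier with a simple zero at the exponent $7/6$, then contract on the nonlinearity) is the same as the paper's. The genuine gap is in how you dispose of that resonance. You prescribe the amplitude at $0$ a priori through $j_M^0=j_M^\infty+\int_0^\infty\omega^{1/2}\phi\,\dd\omega$ and then claim that this choice makes the effective source for the remainder $h$ mass-orthogonal, so that $h$ can be obtained by a contraction in the space with weights $7/6-\delta$ at $0$ and $7/6+\delta$ at $\infty$. This is not true along the iteration: the effective source is $\phi$ plus the quadratic and cubic terms in $g=(\mathfrak f_{j_M^0}-\mathfrak f_{j_M^\infty})\eta+h$ plus the commutator terms generated by the cutoff $\eta$, and these carry a nonzero mass of size $O(\epsilon^2)$ which depends on the unknown $h$ itself; no a priori choice of $j_M^0$ can cancel it (the amplitude at $0$ enters the flux balance through a cube, so the relation between amplitudes and source mass is genuinely nonlinear, while your prescription only kills the linear part). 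Consequently, inverting $\mathcal L$ on such a source produces an $\omega^{-7/6}$ component — this is precisely the $c_1M(\cdot)\,G_0$ term isolated in \cref{lem:mainproofforcing} — and your map does not send the strictly-better-decay space into itself, so the contraction as described does not close. The exact mass balance \eqref{th:id:massbalance} is an identity satisfied by the solution a posteriori, via \cref{pr:frompointwisetofluxes}; it cannot be imposed beforehand as a linear orthogonality constraint on the source.

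The paper's resolution is to keep the resonant amplitude as an extra unknown: write $G=cG_0+H$ with $G_0$ a cutoff copy of $e^{-7x/6}$, use the splitting $\mathcal L^{-1}e^{x/3}F=c_1M(F)G_0+\ell_1(F)$, and run the fixed point in the pair $(c,H)$; the identification of the two asymptotic amplitudes with $j_M^0$ and $j_M^\infty$, and the mass balance, are then read off afterwards from the flux identities. (One could alternatively solve a projected equation and use the flux identity to show that the induced Lagrange multiplier vanishes because the prescribed amplitudes satisfy the exact balance, but that argument must be supplied — your "mass-orthogonality by choice of $j_M^0$" does not substitute for it.) Two smaller corrections: the non-vanishing of the symbol $a-\widehat{\mathcal K}$ on the strip, which you rightly single out as the key spectral input, is not what Section \ref{sec:criticalpowers} establishes (those critical powers concern convergence and locality of the collision integral); in the paper this input is imported from Escobedo--Mischler--Vel\'azquez and checked by the argument principle, and your proposal leaves it entirely open. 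Also, the restriction $\delta<1/12$ is not about fitting between consecutive critical exponents; it is what keeps the doubled exponents produced by the quadratic terms (such as $3/2\pm2\delta$) inside the admissible range of the trilinear estimates without logarithmic losses.
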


\begin{rem}[Positivity]
The solutions we construct are non-negative. As for the forcing $\phi$, it is required to be small, but not to have a sign, even though the physically relevant case is $\phi \geq 0$.\end{rem}

\begin{rem}[Cascades]
  The identity \eqref{th:id:massflux} shows that there is a source of
  $j_M^\infty$ mass of particles at $\infty$ per unit of time, which,
  together with the additional mass added by the forcing $\phi$, are
  dissipated in a sink of
  $j_M^0=j_M^\infty+\int \sqrt{\omega}\phi\, \dd\omega$ mass at $0$. The
  identity \eqref{th:id:energyflux} shows that there is no source of
  energy at $0$ and a sink of energy of
  $\int \omega^{3/2}\phi\, \dd\omega$ at $\infty$. Therefore, this
  solution presents an indirect cascade of mass as well as a direct
  cascade of energy.
\end{rem}

\begin{rem}[Optimality]
The condition \eqref{th:id:massinput} for some $\delta>0$ is optimal
  since for $\delta=0$ the forcing $\phi$ would have infinite mass. The upper bound $\delta<1/12$ allows for the shortest proof, avoiding to track logarithmic losses appearing in it; we did not try to optimise the range of $\delta$.
\end{rem}

\begin{rem}[Uniqueness]
  Finally, our proof shows that the constructed solution is unique in a neighbourhood of $\mathfrak f_{j_M^\infty}$.
\end{rem}

\subsection{Mathematical literature}

The kinetic wave equation~\eqref{KWE} is closely related to the
Uhling-Uhlenbeck equation (also called Boltzmann-Bose-Einstein
equation for hard spheres, or bosonic Nordheim equation). Both
equations share the same cubic terms, but differ in lower order
(quadratic) terms; their dynamics have many common features.

The basic theory of the Uhling-Uhlenbeck equation was established
in~\cite{lu-2004-boltzmann-bose-einstein,lu-2005-boltzmann-bose-einstein,lu-2013-boltzmann-bose-einstein,escobedo-velazquez-2015-schroedinger}:
existence of weak solutions, development of singularities
(Bose-Einstein condensation), and large-time
behavior. Escobedo-Mischler-Velazquez~\cite{escobedo-mischler-velazquez-2007-uehling-uhlenbeck,escobedo-mischler-velazquez-2008-singular-uehling-uhlenbeck,escobedo-2012-non}
further studied singular solutions corresponding to the KZ
solution $\mathfrak f_M$. They investigated linear stability
(following~\cite{balk-zakharov-1998-stability-kolmogorov}), and local
well-posedness in their neighborhood; see~\cite{EVa} for RJ
solutions. These articles will provide the foundation of the present
work.

Finally,~\cite{escobedo-velazquez-2015-schroedinger} focused
specifically on~\eqref{KWE}, proving the existence of weak solutions,
Bose-Einstein condensation, and characterizing large-time
dynamics. While all of the above-mentioned works dealt with the
spherically symmetric case,~\cite{germain-ionescu-tran-2020-optimal}
gives optimal local well-posedness result for the kinetic wave
equation without symmetry assumptions.

The kinetic equation for three-wave interactions involve a quadratic
integral kernel. Related phenomena have been studied, see
\cite{soffer-tran-2020,RST} and references therein.

The derivation of the kinetic wave equation from weakly turbulent
solutions to the cubic nonlinear Schr\"odinger equation in suitable
regimes has received a large attention recently, see
\cite{buckmaster-germain-hani-shatah-2021-onset-schroedinger,CG1,deng-hani-2021-nls,CG2,ACG}
for successive attempts and
\cite{lukkarinen-spohn-2011-weakly-schroedinger} for an earlier result
at equilibrium. Its full validity for certain regimes of parameters
has then been obtained in \cite{DH2,DH3}. Additional related recent
results on the kinetic description of wave turbulence may be
found in \cite{F,dymov-kuksin-2020-zakharov-lvov,dS,ST,M}.

\section{Notations}

We will simply write
\begin{equation*}
\mathfrak{f}(\omega)=\mathfrak f_{j_M^*}(\omega)= \frac{1}{ \omega^{7/6}}.
\end{equation*}
For the collision operator we introduce the trilinear operator
\begin{equation}
  \label{eq:collision-3-4-trilinear}
  \bar{\mathcal{C}}(f,g,h) = 2 \iint_{0<\omega_3<\omega_4,0<\omega_2}
  W [(f_1+f_2) g_3 h_4 - (g_3+g_4) h_1 f_2]\, \dd \omega_3\, \dd \omega_4
\end{equation}
and the symmetric form
\begin{equation} \label{id:polarized}
  \mathcal{C}(f,g,h) = \frac 16
  \left( \bar{\mathcal{C}}(f,g,h) + \bar{\mathcal{C}}(f,h,g)
    + \bar{\mathcal{C}}(g,f,h) + \bar{\mathcal{C}}(g,h,f)
    + \bar{\mathcal{C}}(h,f,g) + \bar{\mathcal{C}}(h,g,f)  \right),
\end{equation}
By construction, $\mathcal C(f)=\mathcal C(f,f,f)$.

Note that all forthcoming integrals will be implicitly performed on
the domain of integration
$\{ \omega_1,\omega_2,\omega_3,\omega_4>0, \
\omega_1+\omega_2=\omega_3+\omega_4, \ \omega_3<\omega_4 \}$, which we stop mentioning
explicitly. It is possible to assume that $\omega_3<\omega_4$ due to the symmetry between these variables.

For the estimates we decompose the collision integral according to the
minimal \((\omega_i)_{i=1,\dots,4}\) as
\begin{equation*}
  \mathcal{C}(f)(\omega_1)
  = \iint_{\omega_2,\omega_3,\omega_4 \geq 0, \ \omega_3<\omega_4}
  \left[ \indic_{\omega_1 \textup{min}} + \indic_{\omega_2
      \textup{min}} + \indic_{\omega_3 \textup{min}}   \right]
  \dots \,\dd\omega_3 \,\dd\omega_4
  = \mathcal{C}^1 + \mathcal{C}^2 + \mathcal{C}^3
\end{equation*}
where
\begin{equation*}
 \indic_{\omega_i \textup{min}} =\indic \{ \omega_i<\omega_j \ \mbox{ for } \ j\neq i\}
\end{equation*}
and we extend naturally this notation to other operators, e.g.
$\bar C^i$.

According to \(E_{\alpha,\beta}\) defined in~\eqref{defEalphabeta}, we define the weight
\begin{equation*}
  \rho(\omega)
  = \rho^{\alpha,\beta}(\omega)
  = \indic_{\omega < 1} \omega^{-\alpha} +  \indic_{\omega > 1} \omega^{-\beta}.
\end{equation*}
We obtain the space $E_{\alpha,\beta}(\mathbb{R})$ through the change
of coordinates
\begin{equation}
  \label{eq:change-omega-ex}
  \omega=e^x, \qquad f(\omega) = F(x);
\end{equation}
in other words the norm of $E_{\alpha,\beta}(\mathbb{R})$ is
\begin{equation*}
  \| F(x) \|_{\alpha,\beta}
  = \sup_{x < 0} e^{\alpha x} |F(x)| + \sup_{x>0} e^{\beta x} |F(x)|
  = \| f(\omega) \|_{\alpha,\beta}.
\end{equation*}
The spaces $E_{\alpha,\beta}(\mathbb{R}_+)$ and
$E_{\alpha,\beta}(\mathbb{R})$ can obviously be identified, and we
will systematically abuse notations by simply denoting
$E_{\alpha,\beta}$.

\section{Critical powers} \label{sec:criticalpowers}

\subsection{List of critical powers}

The equation $\partial_t f = \mathcal{C}(f)$ has a more complicated
structure than the simplicity of the formula for the collision
operator might suggest. This becomes clear once one looks at the
various powers which act as thresholds; we first give a list of these
threshold powers from the least to the most singular at $0$

\medskip

\noindent \underline{Power ${-1}$} As we will see below, the range for
local well-posedness is $\alpha < 1 < \beta$; equivalently, this is
the range for which $\mathcal{C}$ is bounded on
$E_{\alpha,\beta}$. The exponent $\alpha = \beta =1$ corresponds to
the scaling of the equation, and it is not clear whether local
well-posedness holds for $E_{1,1}$. However, the Rayleigh-Jeans
solutions are stationary solutions with decay $\omega^{-1}$ as
$\omega \to \infty$. That the RJ solutions actually solve the equation
is related to cancellations in the integrand of $\mathcal{C}$. For this reason, it seems unlikely
that the equation is locally well-posed in $E_{1,1}$, but it is
reasonable to expect nonlinear stability (or co-dimensional stability) of the RJ solutions in a
stronger topology.

\medskip

\noindent \underline{Power ${-7/6}$} The range $\alpha < 7/6 < \beta$
corresponds to solutions which conserve mass. Furthermore, the first
KZ solution is $\omega^{-7/6}$.

\medskip

\noindent \underline{Power $-5/4$} The integral defining the collision
operator converges absolutely for $\alpha < 5/4$ and $\beta>1$, see
\cref{proplocal} below. This is termed ``locality'' in the
work of Zakharov and collaborators.

\medskip

\noindent \underline{Power $-3/2$} The power $3/2$ gives first of all
the threshold for the finiteness of mass: solutions for which
$\alpha < 3/2 < \beta$ have finite mass. Second, though $\mathcal C$ may not consist of absolutely convergent integrals, it remains well-defined in a weak sense for $\alpha<3/2$ and $\beta>1$, see Lemma~\ref{lemweak}. Finally, the power $3/2$ corresponds to the second KZ solution, namely $\omega^{-3/2}$. Since $3/2 > 5/4$, the locality exponent, this KZ solution is deemed physically irrelevant by Balk and
Zakharov~\cite{balk-zakharov-1998-stability-kolmogorov}.

\subsection{Scaling}
The scaling invariance of the collision operator
$$
\mathcal{C}[f(\lambda \cdot)] = \lambda^{-2} \mathcal{C}[f](\lambda \cdot)
$$
implies that the transformation
$$
f(t,\omega) \mapsto \mu^{1/2} \lambda f(\mu t,\lambda \omega)
$$
leaves the set of solutions invariant.

The scaling (in $\lambda$ only) obtained for $\mu =1$, namely
$f(\omega) \mapsto \lambda f(\lambda \omega)$ leaves (amongst others)
the norm $\| \omega f(\omega) \|_\infty$ invariant. One can expect
local well-posedness to hold in spaces with this scaling (or slightly
more regular). This was indeed confirmed
in~\cite{germain-ionescu-tran-2020-optimal} in the non-radial case.

For the record, we include here the simple proof in the radial case.

\begin{prop} \label{propLWP}

  Assume $0\leq \alpha<1<\beta$, then the operator $\mathcal{C}$ is
  bounded from $E_{\alpha,\beta}$ to itself. As a consequence, the
  equation $\partial_t f = \mathcal{C}(f)$ is locally well-posed in
  $E_{\alpha,\beta}$.
\end{prop}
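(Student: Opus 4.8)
The plan is to prove the boundedness of $\mathcal{C}$ on $E_{\alpha,\beta}$ by direct estimation of the trilinear form $\bar{\mathcal{C}}$ (and hence, by symmetrization, $\mathcal{C}(f,g,h)$), splitting the integral according to which of $\omega_1,\omega_2,\omega_3,\omega_4$ is minimal, as set up in the Notations section. The desired conclusion is $\|\mathcal{C}(f)\|_{\alpha,\beta}\lesssim \|f\|_{\alpha,\beta}^3$, which by trilinearity reduces to bounding $\rho^{\alpha,\beta}(\omega_1)|\mathcal{C}(f,g,h)(\omega_1)|$ uniformly in $\omega_1$ by $\|f\|_{\alpha,\beta}\|g\|_{\alpha,\beta}\|h\|_{\alpha,\beta}$; equivalently, after replacing each $f_i$ by its pointwise bound $|f(\omega_i)|\le \|f\|_{\alpha,\beta}\,\rho^{\alpha,\beta}(\omega_i)^{-1}$, one must show that the scalar integral
\begin{equation*}
  \rho(\omega_1)\iint W\,\big[(\rho(\omega_1)^{-1}+\rho(\omega_2)^{-1})\rho(\omega_3)^{-1}\rho(\omega_4)^{-1} + \rho(\omega_3)^{-1}\rho(\omega_1)^{-1}\rho(\omega_2)^{-1}\big]\,\dd\omega_3\,\dd\omega_4
\end{equation*}
is finite uniformly in $\omega_1>0$. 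By the scaling $\mathcal{C}[f(\lambda\cdot)]=\lambda^{-2}\mathcal{C}[f](\lambda\cdot)$ it suffices to treat $\omega_1=1$, provided one is slightly careful because the weight $\rho$ is not scale invariant; in practice I would instead just prove the two bounds $\sup_{0<\omega_1<1}\omega_1^\alpha|\mathcal{C}(\cdots)|<\infty$ and $\sup_{\omega_1>1}\omega_1^\beta|\mathcal{C}(\cdots)|<\infty$ directly, using $\omega^{-\alpha}\le \omega^{-1}$ for the small-$\omega$ regime near the problematic threshold and $\omega^{-\beta}\le\omega^{-1}$ for large $\omega$.

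The key steps, in order: (1) In the region $\{\omega_3\ \mathrm{min}\}$, use $W=\sqrt{\omega_3}/\sqrt{\omega_1}$; here $\omega_3$ ranges over $(0,\min(\omega_1,\omega_4))$ and $\omega_4=\omega_1+\omega_2-\omega_3$ with $\omega_2\ge\omega_3$, so after integrating in $\omega_3$ first, the factor $\sqrt{\omega_3}\,\rho(\omega_3)^{-1}\sim\sqrt{\omega_3}\,\omega_3^{\alpha}$ is integrable near $0$ precisely because $\alpha>-1/2>-3/2$ — this is where the constraint $\alpha<1$ is comfortably satisfied — and the remaining $\omega_4$-integral is controlled by the decay $\rho(\omega_4)^{-1}$ together with $\omega_4\gtrsim\omega_1$, using $\beta>1$ to get integrability at $\infty$. (2) In $\{\omega_1\ \mathrm{min}\}$, $W=\sqrt{\omega_1}/\sqrt{\omega_1}=1$ and $\omega_1<\omega_2,\omega_3<\omega_4$, so $\omega_4=\omega_1+\omega_2-\omega_3$ with all three of $\omega_2,\omega_3,\omega_4$ bounded below by $\omega_1$; the dangerous endpoint is $\omega_3,\omega_4\to\infty$, handled by $\beta>1$, and $\omega_2\to\omega_1$ which is harmless. (3) In $\{\omega_2\ \mathrm{min}\}$, $W=\sqrt{\omega_2}/\sqrt{\omega_1}$ and $\omega_2=\omega_3+\omega_4-\omega_1<\omega_1$ forces $\omega_3,\omega_4\lesssim\omega_1$ with $\omega_4\gtrsim\omega_1$, a compact-type region; one integrates $\sqrt{\omega_2}\,\rho(\omega_2)^{-1}$ over small $\omega_2$, again integrable since $\alpha+1/2>-1$, with no issue at $\infty$. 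Throughout, the cancellation structure of $\mathcal{C}$ is \emph{not} needed: absolute convergence suffices in this range, which is exactly the point of the hypothesis $\alpha<1<\beta$ as opposed to the borderline $\alpha=\beta=1$.

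Once boundedness of $\mathcal{C}:E_{\alpha,\beta}\to E_{\alpha,\beta}$ is established, local well-posedness of $\partial_t f=\mathcal{C}(f)$ follows from a standard Picard/Banach fixed-point argument on $C([0,T],E_{\alpha,\beta})$: since $\mathcal{C}$ is a bounded trilinear map, the map $f\mapsto f_0+\int_0^t\mathcal{C}(f(s))\,\dd s$ is a contraction on a ball of $C([0,T],E_{\alpha,\beta})$ for $T$ small depending on $\|f_0\|_{\alpha,\beta}$, using the trilinear Lipschitz bound $\|\mathcal{C}(f)-\mathcal{C}(g)\|_{\alpha,\beta}\lesssim(\|f\|_{\alpha,\beta}^2+\|g\|_{\alpha,\beta}^2)\|f-g\|_{\alpha,\beta}$, which is immediate from $\mathcal{C}(f)-\mathcal{C}(g)=\mathcal{C}(f-g,f,f)+\mathcal{C}(g,f-g,f)+\mathcal{C}(g,g,f-g)$ and Step (1)–(3). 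I expect the main obstacle to be purely bookkeeping: correctly enumerating the sub-regions of the constraint set $\{\omega_1+\omega_2=\omega_3+\omega_4,\ \omega_3<\omega_4\}$ under each "$\omega_i$ minimal" indicator, choosing the right variable to integrate first in each, and checking that in every case the weight exponents land strictly inside the convergent range — no single estimate is hard, but there are several cases and one must verify that none of them is borderline under $\alpha<1<\beta$.
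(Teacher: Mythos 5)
Your proposal is correct and takes essentially the same route as the paper: the paper's proof simply invokes \cref{propC}, whose underlying proof (\cref{thm:collision-trilinear-estimate}) is exactly the decomposition according to the minimal frequency with absolute-convergence estimates on each region that you describe, followed by standard Cauchy--Lipschitz/Picard iteration for the local well-posedness. The only blemishes are cosmetic: your displayed majorant omits the term $\rho(\omega_4)^{-1}\rho(\omega_1)^{-1}\rho(\omega_2)^{-1}$, and your stated integrability criteria flip the sign of $\alpha$ (the correct conditions read $\alpha<3/2$, e.g.\ for $\int_0 \omega^{1/2-\alpha}\,\dd\omega$), but neither affects the conclusion in the range $0\le\alpha<1<\beta$.
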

\begin{proof}
  The estimate for \(\mathcal{C}\) follows directly from \cref{propC}. The local well-posedness in $E_{\alpha,\beta}$ then follows by standard Cauchy-Lipschitz theory.
\end{proof}

\subsection{Local property}

As defined in Balk and
Zakharov~\cite{balk-zakharov-1998-stability-kolmogorov}, a spectrum is
called local if each of the summands defining $\mathcal{C}$ is absolutely
convergent. These authors suggest that physically relevant spectra
must be local.

\begin{prop} \label{proplocal}

  A function~$f \in E_{\alpha,\beta}$ is local if $\beta > 1$ and
  $0\leq \alpha < \frac{5}{4}$.
\end{prop}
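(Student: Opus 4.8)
The plan is to bound each of the three pieces $\mathcal{C}^1,\mathcal{C}^2,\mathcal{C}^3$ of the collision operator, where $\mathcal{C}^i$ restricts to the region where $\omega_i$ is the minimal frequency, and show that the defining double integral converges absolutely under the stated hypotheses. Fix $f\in E_{\alpha,\beta}$, so $|f(\omega)|\le \|f\|_{\alpha,\beta}\,\rho^{\alpha,\beta}(\omega)^{-1}$, i.e.\ $|f(\omega)|\lesssim \omega^{-\alpha}$ for $\omega<1$ and $|f(\omega)|\lesssim\omega^{-\beta}$ for $\omega>1$. The integrand is controlled by $W\,[(|f_1|+|f_2|)|f_3||f_4| + (|f_3|+|f_4|)|f_1||f_2|]$, and on each region one substitutes the corresponding power-law bounds together with the weight $W = \min(\sqrt{\omega_1},\sqrt{\omega_2},\sqrt{\omega_3},\sqrt{\omega_4})/\sqrt{\omega_1}$, which on region $i$ equals $\sqrt{\omega_i}/\sqrt{\omega_1}$.

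First I would do the harmless case, $\mathcal{C}^1$ (where $\omega_1$ is minimal, so $W=1$). Here one integrates in $\omega_3,\omega_4$ over $\{\omega_3<\omega_4,\ \omega_1<\omega_3\}$ with $\omega_2=\omega_3+\omega_4-\omega_1$ comparable to $\omega_4$; the bound reduces to a two-dimensional integral of a product of three power-laws and is finite for $\alpha<1<\beta$ — slightly better than needed. The delicate cases are $\mathcal{C}^2$ and $\mathcal{C}^3$, where one of the \emph{large} frequencies, say $\omega_3$ (region $3$) or $\omega_2$ (region $2$), is minimal; then $W=\sqrt{\omega_{\min}}/\sqrt{\omega_1}$ is a genuinely small factor that must be exploited. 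By the symmetry that is already used to reduce to $\omega_3<\omega_4$, region $2$ and region $3$ are essentially the same after relabelling, so it suffices to treat $\mathcal{C}^3$, where $\omega_3=\omega_{\min}$ and $\omega_1,\omega_2,\omega_4$ are all $\ge\omega_3$ with $\omega_1+\omega_2=\omega_3+\omega_4$, hence $\omega_4\ge\max(\omega_1,\omega_2)-\omega_3$ and $\omega_4\sim\max(\omega_1,\omega_2)$ when $\omega_3$ is small. One changes variables to $(\omega_3,\omega_4)$ (or $(\omega_3,\omega_2)$) and splits into the dyadic sub-regions $\omega_1\lessgtr 1$, $\omega_3\lessgtr 1$, $\omega_4\lessgtr 1$; in each, substitute the appropriate $\omega^{-\alpha}$ or $\omega^{-\beta}$ bounds and the factor $\sqrt{\omega_3}/\sqrt{\omega_1}$.

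The main obstacle is the convergence of the $\omega_3$-integral near $\omega_3=0$ in $\mathcal{C}^3$ (and symmetrically $\mathcal{C}^2$): there the integrand behaves like $\sqrt{\omega_3}\cdot\omega_3^{-\alpha}$ (from $|f_3|$, with the other factors and $\omega_4\sim\omega_1+\omega_2-\omega_3$ bounded away from the singularity), so the $\omega_3$-integral near $0$ converges precisely when $\frac12-\alpha>-1$, i.e.\ $\alpha<\frac32$ — but one must check that the \emph{remaining} $\omega_4$ (or $\omega_2$) integral, carrying the leftover powers at infinity, does not cost more, and this is where the sharper threshold $\alpha<5/4$ emerges rather than $3/2$. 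Concretely, after integrating $\omega_3$ over $(0,1)$ one is left with an integral over the large variable whose integrand decays like $\omega_4^{-\beta}\cdot\omega_4^{?}$ with an exponent depending on $\alpha$ through the $(f_1+f_2)$ versus $(f_3+f_4)$ structure: in the term $(f_3+f_4)f_1f_2$ one of the large factors is bounded by $\omega_4^{-\beta}$ while $f_1,f_2$ together contribute at worst $\omega^{-2\alpha}$-type growth when $\omega_1$ or $\omega_2$ is between $\omega_3$ and $1$, and balancing $W\sim\sqrt{\omega_3}$ against this forces $\alpha<5/4$. I would organise the bound so that each of the (finitely many) dyadic cases produces an explicitly convergent elementary integral, collect the constraints, and observe they are all implied by $\beta>1$ and $0\le\alpha<5/4$; the cross term $(f_1+f_2)f_3f_4$ is strictly easier since then $f_3,f_4$ both carry decay in the small variable region. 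This completes the verification that every summand defining $\mathcal{C}(f)$ is absolutely convergent, i.e.\ that $f$ is local.
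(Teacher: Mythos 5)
Your overall strategy --- decompose according to which of $\omega_1,\omega_2,\omega_3$ is minimal, replace $f$ by the power-law bound given by $\rho^{\alpha,\beta}$, and check absolute convergence case by case --- is exactly the route of the paper, which proves the stronger weighted estimate of \cref{thm:collision-trilinear-estimate} and obtains \cref{proplocal} as a corollary. Your identification of $\mathcal{C}^1$ as the harmless piece and of the regions where $W=\sqrt{\omega_{\min}}/\sqrt{\omega_1}$ as the delicate ones is correct. Two points in the sketch are off, however. First, regions $2$ and $3$ are not exchanged by any exact relabelling: the $\omega_3\leftrightarrow\omega_4$ symmetry has already been spent in imposing $\omega_3<\omega_4$, and $\omega_1$ is the fixed evaluation point while $\omega_2=\omega_3+\omega_4-\omega_1$ is determined by the integration variables, so $\indic_{\omega_2 \textup{min}}$ and $\indic_{\omega_3 \textup{min}}$ must both be estimated; the computations are analogous (which is presumably what you mean), but neither literally reduces to the other.

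Second, and more substantively, your localization of the $5/4$ threshold is misstated. At fixed $\omega_1>0$ the dangerous corner is where the two \emph{integrated} small frequencies both carry a factor $\omega^{-\alpha}$ against the single gain $\sqrt{\omega_{\min}}$: in region $3$ this is $\omega_3$ and $\omega_2$ both small (equivalently $\omega_4$ close to $\omega_1$), and the model integral
\begin{equation*}
  \iint_{0<\omega_3<\omega_2<1}\sqrt{\omega_3}\,\omega_3^{-\alpha}\,\omega_2^{-\alpha}\,\dd\omega_2\,\dd\omega_3
\end{equation*}
converges precisely when $\alpha<5/4$ (for $\alpha>1$ the inner integral is $\sim\omega_3^{1-\alpha}$, leaving $\int_0\omega_3^{3/2-2\alpha}\,\dd\omega_3$). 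In particular, your claim that the term $(f_1+f_2)f_3f_4$ is ``strictly easier since $f_3,f_4$ both carry decay'' is false: in region $3$ its piece $f_2f_3f_4$ has $\omega_2,\omega_3$ both small and $\omega_4\approx\omega_1$ bounded, so it is exactly as singular as the piece $f_3f_1f_2$ of the other term and also requires $\alpha<5/4$. Since that is the hypothesis, the conclusion is unharmed, but as written this case would have been skipped. Likewise, attributing the critical growth to $f_1f_2$ is not what drives the pointwise convergence, since $\omega_1$ is not integrated; its factor $\omega_1^{-\alpha}$ only matters as the output weight in the uniform estimate of \cref{thm:collision-trilinear-estimate}.
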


This is a direct corollary of
\cref{thm:collision-trilinear-estimate}. When either $\alpha\geq 5/4$
or $\beta\leq 1$, explicit examples can be found following the proof
of \cref{thm:collision-trilinear-estimate}, for which the collision
integrals may no longer be absolutely convergent, hence the optimality
of the range of parameters in \cref{proplocal}.

\subsection{Weak formulation}
For smooth and localized $f$ and $\phi$, symmetrizing leads to the identity
\begin{equation} \label{id:weakdefinitionmathcalC}
\int \sqrt{\omega_1}\mathcal{C}(f) \, \varphi_1 \,d\omega_1
  = \frac 12 \iiint \sqrt{\omega_1} W f_1 f_2 (f_3+f_4)
  \left(  \varphi_3 + \varphi_4- \varphi_1 - \varphi_2 \right)
  \,\dd\omega_1\, \dd\omega_3 \,\dd\omega_4.
\end{equation}
This weak formulation features a cancellation in the term $\left(  \varphi_3 + \varphi_4- \varphi_1 - \varphi_2 \right)$, which allows to make sense of this identity for a larger set of
functions than allowed by Proposition \ref{proplocal}; this is the content of the following lemma.

\begin{lem} \label{lemweak}
  Assume $f\in E_{\alpha,\beta}$ with $\alpha<3/2$ and $\beta>1$ and
  $\varphi \in \mathcal C^\infty_0((0,\infty))$. Then the integral in
  the right-hand side of \eqref{id:weakdefinitionmathcalC} converges
  absolutely.
\end{lem}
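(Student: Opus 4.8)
The plan is to reduce the claim to showing that the integral of the absolute value of the integrand
\[
  \sqrt{\omega_1}\,W\,|f_1|\,|f_2|\,(|f_3|+|f_4|)\,\bigl|\varphi_3+\varphi_4-\varphi_1-\varphi_2\bigr|
\]
over $\{\omega_1,\omega_3,\omega_4>0,\ \omega_3<\omega_4\}$ (with $\omega_2=\omega_3+\omega_4-\omega_1>0$) is finite, by splitting the domain into finitely many regions. The three ingredients I would use in each region are: the pointwise bound $|f_i|\le\|f\|_{\alpha,\beta}\,\rho(\omega_i)$; the identity $\sqrt{\omega_1}\,W=\min_i\sqrt{\omega_i}$, which provides a gain $\sqrt{\omega_{\min}}$ at whichever frequency is smallest; and the cancellation carried by $\varphi_3+\varphi_4-\varphi_1-\varphi_2$. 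For the latter, note that $\omega_1+\omega_2=\omega_3+\omega_4$ together with $\omega_3<\omega_4$ forces, after using the symmetry $\omega_1\leftrightarrow\omega_2$ to assume $\omega_1\le\omega_2$, one of the two interlacings $\omega_1\le\omega_3\le\omega_4\le\omega_2$ or $\omega_3\le\omega_1\le\omega_2\le\omega_4$; in either case $\{\omega_1,\omega_3\}$ are the two smallest frequencies, $\{\omega_2,\omega_4\}$ the two largest, and $|\omega_1-\omega_3|=|\omega_2-\omega_4|$, whence
\[
  \bigl|\varphi_3+\varphi_4-\varphi_1-\varphi_2\bigr|\ \lesssim_\varphi\ \min\bigl(1,\,|\omega_1-\omega_3|\bigr).
\]
Finally, since $\varphi$ is supported in a fixed compact $[a,b]\subset(0,\infty)$, the integrand vanishes unless some $\omega_i\in[a,b]$; combined with the constraint this prevents three of the frequencies from tending to $0$ simultaneously, so singular behaviour at $0$ can only come from $\omega_1$ and $\omega_3$.

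I would then decompose according to the minimal frequency, as in the splitting $\mathcal C=\mathcal C^1+\mathcal C^2+\mathcal C^3$ (the variable $\omega_4$ never realizes the minimum), and further according to whether the frequencies are $\le1$ or $>1$. In a region where only one frequency $\omega_i\in\{\omega_1,\omega_3\}$ is small and the others are bounded away from $0$, the weight gives $\sqrt{\omega_i}$ and $|f_i|\lesssim\omega_i^{-\alpha}$, so the $\omega_i$-integral near $0$ is $\int_0^1\omega_i^{1/2-\alpha}\,\dd\omega_i$, finite precisely because $\alpha<3/2$; the remaining integrations run over compact sets, or are controlled by $\int_1^\infty\omega^{-\beta}\,\dd\omega<\infty$ and $\iint_{1<\omega_3<\omega_4}(\omega_3+\omega_4)^{-\beta}\omega_3^{-\beta}\,\dd\omega_3\,\dd\omega_4<\infty$, both finite since $\beta>1$. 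In the region where $\omega_1$ and $\omega_3$ are both small (so $\omega_2\approx\omega_4$, both staying in a fixed compact subset of $(0,\infty)$), only $\varphi_2$ and $\varphi_4$ survive and $|\varphi_2-\varphi_4|\le\|\varphi'\|_\infty|\omega_2-\omega_4|=\|\varphi'\|_\infty|\omega_1-\omega_3|\le\|\varphi'\|_\infty\max(\omega_1,\omega_3)$; multiplying this by $\sqrt{\min(\omega_1,\omega_3)}$, $|f_1|\lesssim\omega_1^{-\alpha}$ and $|f_3|\lesssim\omega_3^{-\alpha}$ gives an integrand $\lesssim\min(\omega_1,\omega_3)^{1/2-\alpha}\max(\omega_1,\omega_3)^{1-\alpha}$, and (the $\omega_4$-range being an interval of length $\le b-a$ by the constraint) one is left with $\iint_{0<\omega_1,\omega_3<1}\min(\omega_1,\omega_3)^{1/2-\alpha}\max(\omega_1,\omega_3)^{1-\alpha}\,\dd\omega_1\,\dd\omega_3\asymp\int_0^1\omega^{5/2-2\alpha}\,\dd\omega$, finite for $\alpha<7/4$. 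The large-frequency regions are handled dually: the constraint forbids one pair from being large while the other stays bounded, so when a frequency is large at least two others are, the minimal frequency (hence the weight) stays bounded, and one reduces to the two $\beta>1$ integrals above; a mixed region (one small frequency, one or more large) factorizes into a small-frequency integral ($\alpha<3/2$) and a large-frequency one ($\beta>1$). Summing the finitely many bounded contributions yields the lemma.

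The step I expect to be the main obstacle is the two-small-frequencies regime: there $\alpha$ may lie in $[5/4,3/2)$, above the locality threshold, so the crude bound $|f_1|\,|f_3|\lesssim\omega_1^{-\alpha}\omega_3^{-\alpha}$ together with $\sqrt{\min(\omega_1,\omega_3)}$ alone does not produce an integrable singularity, and one genuinely needs the extra factor $|\omega_1-\omega_3|$ extracted from the smoothness of $\varphi$ via the resonance relation $\omega_3-\omega_1=\omega_2-\omega_4$. Everything else is a routine bookkeeping of finitely many one- and two-dimensional power-law integrals.
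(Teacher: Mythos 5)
Your proposal is correct and follows essentially the same route as the paper: the paper also reduces (after symmetrizing to $\omega_1\le\omega_2$, $\omega_3<\omega_4$) to the observation that the only dangerous regime is $\omega_1,\omega_3$ both small with $\omega_2,\omega_4$ pinned near $\supp\varphi$, where the Lipschitz cancellation $|\varphi_4-\varphi_2|\lesssim|\omega_4-\omega_2|=|\omega_1-\omega_3|$ restores integrability precisely for $\alpha<3/2$, while everywhere else crude bounds with $\sqrt{\omega_{\min}}$, $\alpha<3/2$ and $\beta>1$ suffice. One small slip that does not affect the argument: in the two-small-frequencies region the double integral $\iint_{(0,1)^2}\min(\omega_1,\omega_3)^{1/2-\alpha}\max(\omega_1,\omega_3)^{1-\alpha}\,\dd\omega_1\,\dd\omega_3$ still requires $\alpha<3/2$ (the inner $\min$-integral diverges otherwise), so it is not genuinely finite up to $\alpha<7/4$, but under the hypothesis $\alpha<3/2$ this is immaterial.
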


One can check that this result is optimal; for either $\alpha\geq 3/2$
or $\beta\geq 1$ there exists $f \in E_{\alpha,\beta}$ for which the
integral is not absolutely convergent if
\(\varphi \in C_0^\infty((0,\infty))\) is nonzero.

\begin{proof}
  By symmetry, we can restrict to the case \(\omega_1<\omega_2\) and \(\omega_3<\omega_4\) and consider
  \begin{equation*}
    I = \iiint_{\omega_1<\omega_2, \omega_3<\omega_4}
    \min\left(\sqrt{\omega_1},\sqrt{\omega_3}\right)
    f_1 f_2 (f_3+f_4)
    \left(\varphi_3 + \varphi_4- \varphi_1 - \varphi_2 \right)
    \,\dd\omega_1\, \dd\omega_3 \,\dd\omega_4.
  \end{equation*}

  Since \(\varphi\) is a compactly supported test function, it can be assumed to have  \(\supp \varphi \subset [2\delta,\delta^{-1}]\) for some
  \(\delta > 0\). Then \(\omega_2 \ge \delta\) if \( \varphi_3 + \varphi_4- \varphi_1 - \varphi_2 \neq 0\). We decompose accordingly
  \begin{align*}
    I & = \underbrace{\int_{\omega_2=\delta}^{\infty}
        \int_{\omega_1=0}^\delta
        \int_{\omega_3=0}^\delta
        \min\left(\sqrt{\omega_1},\sqrt{\omega_3}\right)
        f_1 f_2 (f_3+f_4)\,
        (\varphi_4-\varphi_2)\, \dd \omega_3\, \dd \omega_1\, \dd \omega_2.}_{=I_1}\\
      &+ \underbrace{\int_{\omega_2=\delta}^{\infty}
        \iint_{\max(\omega_1,\omega_3)\geq \delta }
        \min\left(\sqrt{\omega_1},\sqrt{\omega_3}\right)
        f_1 f_2 (f_3+f_4)\,
        (\varphi_4-\varphi_2+\varphi_3-\varphi_1)\, \dd \omega_3\, \dd \omega_1\, \dd \omega_2.}_{=I_2}
  \end{align*}

  The integral $I_1$ contains the contribution of the mass for small
  \(\omega\). As \(\varphi\) is Lipschitz we find that
  \(|\varphi_4-\varphi_2| \le L |\omega_4-\omega_2| = L
  |\omega_3-\omega_1|\). Using also $|f_3+f_4|\lesssim \rho_3$ and $\int_\delta^\infty |f_2|d\omega_2<\infty$ as $\beta>1$, the integral $I_1$ is
  absolutely converging as soon as
  \begin{equation*}
    \int_{\omega_1=0}^\delta
    \int_{\omega_3=0}^\delta
    \min\left(\sqrt{\omega_1},\sqrt{\omega_3}\right)
    \rho_1 \rho_3 |\omega_1-\omega_3|\,
    \dd \omega_3\, \dd \omega_1
  \end{equation*}
  is integrable. It can be bounded by
  \begin{align*}
    \int_{\omega_1=0}^\delta
    \int_{\omega_3=0}^\delta
    \min\left(\sqrt{\omega_1},\sqrt{\omega_3}\right)
    \rho_1 \rho_3 |\omega_1-\omega_3|\,
    \dd \omega_3\, \dd \omega_1
    \le 4
    \int_{\omega_1=0}^\delta
    \int_{\omega_3=\omega_1}^\delta
    \sqrt{\omega_1}
    \omega_1^{-\alpha}
    \omega_3^{-\alpha}
    \omega_3
    \, \dd \omega_3\, \dd \omega_1,
  \end{align*}
 which is finite as soon as \(\alpha < 3/2\). Alternatively, the
  integral is bounded if \(M(\indic (\omega<1)f) < \infty\).

  As for the second integral $I_2$, we bound by brute force
  $ \min\left(\sqrt{\omega_1},\sqrt{\omega_3}\right)\lesssim
  \min(1,\sqrt{\omega_1},\sqrt{\omega_3})$ on the support of
  $ (\varphi_4-\varphi_2+\varphi_3-\varphi_1)$ and
  $|\varphi_4-\varphi_1+\varphi_3-\varphi_1|\lesssim 1$ so that
  \begin{equation*}
    |I_2| \lesssim \int_{\omega_2=\delta}^\infty
    \iint_{\max(\omega_1,\omega_3)\geq \delta }
    \min(1,\sqrt{\omega_1},\sqrt{\omega_3}) \rho_1\rho_2\rho_3\,
    \dd\omega_1\,\dd\omega_2\,\dd\omega_3
    <\infty. \qedhere
  \end{equation*}
\end{proof}

\subsection{How to interpret the equation}
Consider a function
$$
f \in \mathcal{C}^1([0,T],E_{\alpha,\beta}).
$$
When is it a solution of the kinetic wave equation~\eqref{KWE}? In light of the discussion above, we see that the equation can be
given different meanings, depending on the range of $\alpha$ and
$\beta$. We discuss the evolution problem, but the extension to
stationary solutions is obvious.

\medskip

\noindent \underline{If $\alpha < 1 < \beta$}, by Proposition~\ref{propLWP}, the
equation~\eqref{KWE} can be interpreted as an equality in
$\mathcal{C}^1([0,T],E_{\alpha,\beta})$.

\medskip

\noindent \underline{If $\alpha < \frac{5}{4}$, $\beta>1$}, by
\cref{proplocal}, the equation~\eqref{KWE} can be
understood pointwise in $[0,T] \times (0,\infty)$.

\medskip

\noindent \underline{If $\alpha < \frac{3}{2}$ or finite mass and
  $\beta>1$}, by Lemma~\ref{lemweak}, the equation~\eqref{KWE}
can be understood weakly (tested against compactly supported Lipschitz
functions).

\begin{rem}
  This has not been mentioned yet, but the equation~\eqref{KWE} is
  always considered in the present paper in the
  \emph{non-interacting condensate} regime, following the
  terminology of~\cite{escobedo-velazquez-2015-schroedinger}. This
  means that the mass which is transfered to zero momentum is not
  allowed to interact with the rest of the solution, or equivalently
  that it is instantaneously absorbed by an infinite sink.
\end{rem}

\section{Cascades, fluxes of mass and energy}

The fluxes of the equation are a common tool for studying the equation
\cite{dyachenko-newell-pushkarev-zakharov-1992-optical,spohn-2010-kinetics-bose-einstein,escobedo-velazquez-2015-schroedinger}
and they provide a natural notion of solution.

\subsection{Definition of the fluxes}

By \cref{proplocal} and the computation of \cref{basiccancellation},
we have that $\mathcal C(\mathfrak f)(\omega)$ is well-defined
(i.e. by a convergent integral), and equal to $0$ for every
$\omega$. However, ``$\mathcal C(\mathfrak f)(\omega)=0$'' is not an
appealing identity, since one expects $\mathfrak f$ to be a steady
state with constant mass flux from $\infty$ to $0$. Rather, we propose here in \cref{def:mathcalCweak} a weak definition of
$\omega^{1/2}\mathcal C(f)$ and $\omega^{3/2}\mathcal C(f)$ (which
would be the time variation of the mass and energy densities for a
solution of \eqref{KWE}) for general $f\in E_{7/6,7/6}$. The KZ
solution $\mathfrak f$ will then present a source of mass at
$\infty$ and a sink at $0$. These definitions of
$\omega^{1/2}\mathcal C(f)$ and $\omega^{3/2}\mathcal C(f)$ will
moreover be proved to be natural, since corresponding to the
approximation by finite size systems, i.e. to the limit of
$\omega^{1/2}\mathcal C(f_n)$ and $\omega^{3/2}\mathcal C(f_n)$ for
any sequence $f_n\to f$ of smooth and localised functions $f_n$.

We first derive the mass flux. After symmetrizing, for $f$ smooth and
localised we obtain that
\begin{equation}
  \label{symmetrizing}
  \begin{split}
    \int \sqrt{\omega_1}\mathcal{C}(f) \, \varphi_1 \,d\omega_1
    & =  \iint_{\omega_2,\omega_3,\omega_4 \geq 0} \sqrt{\omega_1}W
    [(f_1 + f_2)f_3 f_4 - (f_3 + f_4) f_1 f_2] \varphi_1 \,\dd\omega_1\, \dd\omega_3 \,\dd\omega_4 \\
    & = \int \sqrt{\omega_1} W f_1 f_2 f_3
    \left(  \varphi_3 + \varphi_4- \varphi_1 - \varphi_2 \right) \,\dd\omega_1\, \dd\omega_3 \,\dd\omega_4.
  \end{split}
\end{equation}

We now write
\begin{align*}
  & \varphi_3  - \varphi_2
    = \indic_{\omega_3 > \omega_2} \int_{\omega_2}^{\omega_3} \varphi'
    - \indic_{\omega_2 > \omega_3} \int_{\omega_3}^{\omega_2} \varphi' \\
  & \varphi_4 - \varphi_1
    = \indic_{\omega_4 > \omega_1} \int_{\omega_1}^{\omega_4} \varphi'
    - \indic_{\omega_1 > \omega_4} \int_{\omega_4}^{\omega_1} \varphi' .
\end{align*}

Using the formulas above to express
$\left( \varphi_3 + \varphi_4 - \varphi_1 - \varphi_2 \right) $,
inserting the result and rearranging, we find that
\begin{equation} \label{id:massfluxduality}
  \int \sqrt{\omega}\mathcal{C}(f) \, \varphi \,\dd\omega
  = \int J_M(f) \, \varphi'  \,\dd\omega.
\end{equation}
Above, $J_M$ is a flux whose orientation is from $0$ to $\infty$
with\footnote{We suspect that there is a typo in
  \cite{escobedo-velazquez-2015-schroedinger} for \(J_1\).}
\begin{align}
  \label{def:JM}
  &J_M(f) = J_1(f) +J_2(f) - J_3(f) - J_4(f)\\
  \label{def:J1}
  &J_1(f)(\omega) = \int_{\substack{\omega_1 >0 \\ 0<\omega_2< \omega \\ \omega < \omega_3 < \omega_1 + \omega_2}}  \sqrt{\omega_1} W f_1 f_2 f_3 \,\dd\omega_1\, \dd\omega_2 \,\dd\omega_3 \\
  \label{def:J2}
  &J_2(f)(\omega) = \int_{\substack{0<\omega_1 <\omega \\ \omega_2> \omega-\omega_1>0 \\ 0 < \omega_3 < \omega_1 + \omega_2-\omega}}  \sqrt{\omega_1} W f_1 f_2 f_3 \,\dd\omega_1\, \dd\omega_2 \,\dd\omega_3 \\
  \label{def:J3}
  &J_3(f)(\omega) =  \int_{\substack{\omega_1 >0 \\ \omega_2> \omega \\ 0 < \omega_3 < \omega}}  \sqrt{\omega_1} W f_1 f_2 f_3 \,\dd\omega_1\, \dd\omega_2 \,\dd\omega_3 \\
  \label{def:J4}
  &J_4(f)(\omega) =  \int_{\substack{\omega_1 >\omega \\ 0<\omega_2 \\ \omega_1 +\omega_2 -\omega < \omega_3 < \omega_1+\omega_2}}  \sqrt{\omega_1} W f_1 f_2 f_3 \,\dd\omega_1\, \dd\omega_2 \,\dd\omega_3
\end{align}

Next, integrating by parts in \eqref{id:massfluxduality}, and using
$\int \sqrt{\omega}C(f)=0$ we get
\begin{equation} \label{id:energyfluxduality}
\int \omega^{3/2}\mathcal{C}(f) \, \varphi \,d\omega = \int J_E(f) \, \varphi ' \,\dd\omega,
\end{equation}
where
\begin{equation} \label{def:JE}
  J_E(f)(\omega)
  = \omega J_M(\omega)-\int_0^\omega J_M(\tilde \omega)\, \dd\tilde \omega.
\end{equation}

\subsection{Locality of the fluxes}

\begin{lem} \label{lem:localflux}

  If $\alpha<\frac 54$ and $\beta>1$ then the fluxes
  $(J_i)_{1\leq i \leq 4}$ are local (i.e. the integrals converge
  absolutely).
\end{lem}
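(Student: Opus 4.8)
\emph{Proof sketch.} The plan is to show, for every fixed $\omega>0$, that each of the four integrals $J_1(f)(\omega),\dots,J_4(f)(\omega)$ converges absolutely, by the same elementary power counting as in the proof of \cref{lemweak}. Write $\sqrt{\omega_1}W=\min(\sqrt{\omega_1},\sqrt{\omega_2},\sqrt{\omega_3},\sqrt{\omega_4})$ and $|f_i|\le\|f\|_{\alpha,\beta}\,\rho_i$. On each domain the constraint $\omega_1+\omega_2=\omega_3+\omega_4$ together with the defining inequalities pins down which frequency realises the minimum: on the domains of $J_1$ and $J_4$ one has $\omega_2<\omega_3$ and $\omega_4<\omega_1$ (for $J_1$, $\omega_4=\omega_1+\omega_2-\omega_3<\omega_1$ because $\omega_3>\omega>\omega_2$; for $J_4$, $\omega_4<\omega<\omega_1$), hence $\sqrt{\omega_1}W=\min(\sqrt{\omega_2},\sqrt{\omega_4})$; on the domains of $J_2$ and $J_3$ one has $\omega_3<\omega_2$ and $\omega_1<\omega_4$, hence $\sqrt{\omega_1}W=\min(\sqrt{\omega_1},\sqrt{\omega_3})$. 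It then remains to bound $\int_{D_i}\min(\sqrt{\omega_a},\sqrt{\omega_b})\,\rho_1\rho_2\rho_3$ for the relevant pair $(a,b)$.

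The case $J_3$ is the model one and already displays both thresholds. Here the integral factorises:
\[ |J_3(f)(\omega)|\le\|f\|_{\alpha,\beta}^3\Big(\int_{\omega}^{\infty}\rho_2\,\dd\omega_2\Big)\Big(\iint_{\omega_1>0,\ 0<\omega_3<\omega}\min(\sqrt{\omega_1},\sqrt{\omega_3})\,\rho_1\rho_3\,\dd\omega_1\,\dd\omega_3\Big). \]
The first factor is finite exactly because $\beta>1$ (the cut-off $\omega_2>\omega>0$ makes the behaviour near $0$ irrelevant, so no condition on $\alpha$ enters there). For the second factor, split according to whether $\omega_1<\omega_3$ or $\omega_3<\omega_1$ and integrate out the larger variable; on $\{\omega_3<\omega_1\}$ one is left with $\int_0^{\omega}\sqrt{\omega_3}\,\rho_3\big(\int_{\omega_3}^{\infty}\rho_1\,\dd\omega_1\big)\dd\omega_3$, and since $\int_{\omega_3}^{\infty}\rho_1\,\dd\omega_1\lesssim 1+\omega_3^{1-\alpha}$ for $\alpha\ge1$ (bounded if $\alpha<1$, a harmless logarithm if $\alpha=1$), the dangerous term is $\int_0\omega_3^{3/2-2\alpha}\,\dd\omega_3$, which converges precisely when $\alpha<5/4$; the region $\{\omega_1<\omega_3\}$ is identical, and all remaining contributions (variables confined to a fixed bounded interval, or ranging over $(1,\infty)$, where $\beta>1$ suffices) are clearly finite.

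For $J_1$, $J_2$ and $J_4$ the three variables no longer decouple, being tied either by $\omega_3+\omega_4=\omega_1+\omega_2$ (for $J_1,J_4$) or by the $\omega$-dependent endpoint of the $\omega_3$-range (for $J_2$); the remedy is to integrate first in the coupled variable. For $J_2$ one fixes $(\omega_1,\omega_2)$ and estimates $\int_0^{\omega_1+\omega_2-\omega}\min(\sqrt{\omega_1},\sqrt{\omega_3})\,\rho_3\,\dd\omega_3$, splitting at $\omega_3=\omega_1$: on the first piece $\omega_3\le\omega_1<\omega$, so it is bounded, by $\lesssim\omega_1^{3/2-\alpha}$ when $\omega_1<1$; on the second piece $\int_{\omega_1}^{\infty}\rho_3\,\dd\omega_3\lesssim1+\omega_1^{1-\alpha}$ and it is present only when $\omega_2>\omega$. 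Integrating then in $\omega_2$ (using $\beta>1$ whenever the range is $(\omega,\infty)$, and $\int_{\omega-\omega_1}^{\infty}\rho_2\lesssim1+(\omega-\omega_1)^{1-\alpha}$ otherwise) and finally in $\omega_1\in(0,\omega)$ against $\rho_1$ leaves again $\int_0\omega_1^{3/2-2\alpha}\,\dd\omega_1$ as the binding term, i.e. $\alpha<5/4$. For $J_1$ and $J_4$ one passes to coordinates $(\omega_1,\omega_2,\omega_4)$ with $\omega_3=\omega_1+\omega_2-\omega_4$, integrates first in $\omega_4$ over its range $(0,\omega_1+\omega_2-\omega)$, and observes that in the only regime where the last remaining variable fails to be bounded below one automatically has $\omega_4$ (hence the minimum) small and $\omega_3$ comparable to $\omega$; a short computation then shows $J_1(f)(\omega),J_4(f)(\omega)<\infty$ already for $\alpha<3/2$, so these are not the binding cases.

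The only real difficulty is the bookkeeping: one must cover every sub-region, keep the weight $\rho_i$ attached to the last variable integrated --- it is exactly this weight that upgrades the harmless $\int_0\omega^{1/2-\alpha}\,\dd\omega$ into the binding $\int_0\omega^{3/2-2\alpha}\,\dd\omega$ --- and avoid bounding $\min(\sqrt{\omega_a},\sqrt{\omega_b})$ by $\sqrt{\omega_b}$ alone across a half-line, since $\int_1^{\infty}\omega^{1/2-\beta}\,\dd\omega$ would then force $\beta>3/2$ rather than $\beta>1$; the point is that in every dangerous regime the minimal frequency stays confined to $(0,\omega)$. This is entirely parallel to the computation in the proof of \cref{lemweak}, with the test-function cancellation there replaced here by the localisation at the single frequency $\omega$, which is why the threshold is $5/4$ in place of $3/2$.
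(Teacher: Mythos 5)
Your proposal is correct and follows essentially the same route as the paper: bound $|f_i|$ by the weights $\rho_i$, use $\sqrt{\omega_1}W=\min(\sqrt{\omega_1},\dots,\sqrt{\omega_4})$, split each $J_i$-domain into sub-regions, and power-count, with $\beta>1$ controlling the tails and the pairing of the square-root weight with two small frequencies producing the threshold $\alpha<5/4$ (the paper simply rescales to $\omega=1$ and compares each $\omega_i$ to $1$, while you organize the split by which frequency realises the minimum). Your additional observation that on the $J_1$ and $J_4$ domains the minimum is $\min(\sqrt{\omega_2},\sqrt{\omega_4})$, so that these two fluxes converge already for $\alpha<3/2$ and only $J_2,J_3$ are binding at $5/4$, is a correct (slightly sharper) refinement of the paper's cruder but sufficient bounds.
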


\begin{proof}
  By a rescaling of variables, it suffices to take $\omega=1$ and
  prove $J_n(\rho)(1)<\infty$ for $1\leq n\leq 4$. We introduce
  $\tilde W_i=\sqrt{\omega_i}$ and
  $\tilde W_{i,j}=\min (\sqrt{\omega_i},\sqrt{\omega_j})$.

  Observe first the following integrals are finite since $\alpha<5/4$
  and $\beta>1$:
  \begin{equation} \label{bd:localflux1}
    \int_{\omega_i>1} \rho_i\,\dd\omega_i<\infty,
    \qquad \int_{\omega_i<1}\tilde W_i\rho_i\,\dd\omega_i<\infty,
    \qquad \int_{\omega_i,\omega_j<1}\tilde W_{i,j}\rho_i\rho_j\,
    \dd\omega_i \, \dd \omega_j<\infty.
  \end{equation}

  For the first flux:
  \begin{align*}
    J_1(\rho)(1)
    & =\int_{\substack{\omega_1,\omega_2<1\\
    1<\omega_3<\omega_1+\omega_2}}
    \sqrt{\omega_1}W\rho_1\rho_2\rho_3\,
    \dd\omega_1\,\dd\omega_2\,\dd\omega_3
    +\int_{\substack{\omega_1>1 \\ \omega_2<1\\ 1<\omega_3<\omega_1+\omega_2}} \sqrt{\omega_1}W\rho_1\rho_2\rho_3 \,\dd\omega_1\,\dd\omega_2\,\dd\omega_3 \\
    & \leq \int_{\substack{\omega_1,\omega_2<1\\ 1<\omega_3}} \tilde
    W_{1,2} \rho_1\rho_2\rho_3\, \dd\omega_1\,\dd\omega_2\,\dd\omega_3
    +\int_{\substack{\omega_1,\omega_3 >1 \\ \omega_2<1}} \tilde W_2\rho_1\rho_2\rho_3 \,\dd\omega_1\,\dd\omega_2\,\dd\omega_3 \quad <\infty
  \end{align*}
  using a combination of the identities in \eqref{bd:localflux1}. For the second, similarly:
  \begin{align*}
    J_2(\rho)(1) & = \int_{\substack{\omega_1<1/2 \\ 1-\omega_1<\omega_2 \\ \omega_3<\min (\omega_1+\omega_2-1,1)}}  ...+ \int_{\substack{\omega_1<1/2 \\ 1-\omega_1<\omega_2 \\ 1<\omega_3 <\omega_1+\omega_2-1}}  ...+\int_{\substack{1/2 <\omega_1<1 \\ 1-\omega_1<\omega_2<1\\ 0<\omega_3<\omega_1+\omega_2-1}} ...\\
                 &\qquad +\int_{\substack{1/2 <\omega_1<1 \\ \omega_2>1\\ 0<\omega_3<1/2}}...+\int_{\substack{1/2 <\omega_1<1 \\ \omega_2>1\\ 1/2<\omega_3<\omega_1+\omega_2-1}}...  \\
    \leq &   \int_{\substack{\omega_1,\omega_3<1 \\ \omega_2>1/2}}  \tilde W_{1,3} \rho_1\rho_2\rho_3+ \int_{\substack{\omega_1<1/2 \\ 1/2<\omega_2 \\ 1<\omega_3 }}\tilde W_1 \rho_1\rho_2\rho_3 +\int_{\substack{1/2 <\omega_1<1 \\ 0<\omega_2,\omega_3<1}}\tilde W_{2,3} \rho_1\rho_2\rho_3\\
                 &\qquad  +\int_{\substack{1/2 <\omega_1<1 \\ \omega_2>1\\ 0<\omega_3<1}} \tilde W_{3}\rho_2\rho_3+\int_{\substack{1/2 <\omega_1<1 \\ \omega_2>1\\ 1/2<\omega_3}} \rho_2\rho_3 \quad   <\infty.
  \end{align*}
  For the third and fourth, analogously:
  \begin{align*}
    J_3(\rho)(1) &=   \int_{\substack{\omega_1<1 \\ \omega_2>1 \\ \omega_3<1}}  \tilde W_{1,3} \rho_1\rho_2\rho_3+  \int_{\substack{\omega_1>1 \\ \omega_2>1 \\ \omega_3<1}}  \tilde W_{3} \rho_1\rho_2\rho_3 <\infty,\\
    J_4(\rho)(1) &=   \int_{\substack{\omega_1>1 \\ \omega_2>1 \\ \omega_1+\omega_2-1<\omega_3<\omega_1+\omega_2}} ... +  \int_{\substack{\omega_1>1 \\ \omega_2<1 \\ \omega_1+\omega_2-1<\omega_3<1}} ... +  \int_{\substack{\omega_1>1 \\ \omega_2<1 \\  1<\omega_3< \omega_1+\omega_2}} ... \\
                 &\leq \int_{\substack{\omega_1>1 \\ \omega_2>1 }} \rho_1\rho_2  +  \int_{\substack{\omega_1>1 \\ \omega_2<1 \\ \omega_3<1}} \tilde W_{2,3}\rho_1\rho_2\rho_3+  \int_{\substack{\omega_1>1 \\ \omega_2<1 \\  1<\omega_3}} \tilde W_2 \rho_1\rho_2\rho_3 \quad <\infty
  \end{align*}
  where for the first sum in $J_4$ we used that
  $\rho_3\sqrt{\omega_1}W\lesssim 1$ in the domain of
  integration. This concludes the proof.
\end{proof}

By a rescaling of the variables, one easily checks that the flux of
mass of the KZ solution $\mathfrak f=\omega^{-7/6}$
is constant:
\begin{equation} \label{id:KZflux}
  \forall \omega>0, \qquad J_M(\mathfrak f)(\omega)=-j_M^*<0.
\end{equation}
Its negativity, see
\cite{spohn-2010-kinetics-bose-einstein,escobedo-velazquez-2015-schroedinger},
indicates mass flowing from infinity to $0$. Combining \eqref{def:JM}
and \eqref{id:KZflux} shows there is no flux of energy:
\begin{equation} \label{id:KZflux2}
  \forall \omega>0, \qquad J_E(\mathfrak f)(\omega)=0.
\end{equation}

\begin{cor} \label{cor:fluxes}

  Let $f\in E_{7/6,7/6}$. Then
  \begin{enumerate}[(i)]
  \item There holds $J_M(f)\in L^\infty(0,\infty)$ and
    $\omega^{-1}J_E(f)\in L^\infty(0,\infty)$ with
    \begin{equation} \label{bd:JMJE}
      \| J_M(f)\|_{L^\infty(0,\infty)}+\| \omega^{-1}J_E(f)\|_{L^\infty(0,\infty)}\lesssim \| f\|_{7/6,7/6}^3.
    \end{equation}
  \item If $f_n$ is a bounded sequence in $E_{7/6,7/6}$ such that
    $f_n\rightarrow f$ almost everywhere, then $J_M(f_n)\to J_M(f)$
    and $J_E(f_n)\to J_E(f)$ uniformly on every compact sets of
    $(0,\infty)$ .
  \end{enumerate}
\end{cor}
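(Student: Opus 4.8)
The plan is to reduce both statements to the locality estimate of \cref{lem:localflux}, together with a uniform (in the approximating sequence) domination and a pointwise convergence argument. First I would observe that for $f \in E_{7/6,7/6}$ we have the pointwise bound $|f(\omega)| \le \|f\|_{7/6,7/6}\,\mathfrak g(\omega)$ where $\mathfrak g = \rho^{7/6,7/6}$. Since $7/6 < 5/4$, \cref{lem:localflux} (applied with $\alpha=\beta=7/6$, noting $7/6>1$) shows that each $J_n(\mathfrak g)(1)$ is finite, and the rescaling of variables used in that proof gives $J_n(\mathfrak g)(\omega) = \omega^{c_n} J_n(\mathfrak g)(1)$ for the appropriate scaling exponents $c_n$; a direct check (using that $\mathfrak g$ has the scaling-critical exponent $7/6$ with respect to the mass flux, exactly as in \eqref{id:KZflux}) shows $J_M(\mathfrak g)$ is a bounded function of $\omega$, and then $\omega^{-1} J_E(\mathfrak g)$ is bounded by \eqref{def:JE}. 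By the trilinearity/monotonicity of the integrands $\sqrt{\omega_1} W f_1 f_2 f_3$ in the absolute values of $f$, we get $|J_M(f)(\omega)| \le \|f\|_{7/6,7/6}^3 \, J_M^{\mathrm{abs}}(\mathfrak g)(\omega)$ where $J_M^{\mathrm{abs}}$ denotes the flux computed with $|J_i|$ replaced by $\int |\sqrt{\omega_1} W f_1 f_2 f_3|$; but $\mathfrak g \ge 0$ so this is just $\sum_i J_i(\mathfrak g)(\omega) \lesssim \|f\|^3_{7/6,7/6}$, and similarly for $J_E$. This proves (i) and the bound \eqref{bd:JMJE}.

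For (ii), fix a compact $K \subset (0,\infty)$. By (i) and the scaling, there is a single locally integrable (in fact $L^\infty_{loc}$ in $\omega$) majorant $G(\omega) = C \sup_n \|f_n\|^3_{7/6,7/6}\, \sum_i J_i(\mathfrak g)(\omega)$ for all the integrands defining $J_M(f_n)(\omega)$ and $J_E(f_n)(\omega)$, $\omega \in K$. Since $f_n \to f$ almost everywhere and $|f_n| \le (\sup_m\|f_m\|)\,\mathfrak g$, the integrands $\sqrt{\omega_1}W (f_n)_1 (f_n)_2 (f_n)_3$ converge almost everywhere on each of the four flux domains (for every fixed $\omega \in K$) to the corresponding integrand for $f$, and are dominated uniformly by the integrable function appearing in the proof of \cref{lem:localflux}. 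Dominated convergence gives $J_i(f_n)(\omega) \to J_i(f)(\omega)$ pointwise, hence $J_M(f_n) \to J_M(f)$ and $J_E(f_n) \to J_E(f)$ pointwise on $K$. To upgrade to uniform convergence I would use equicontinuity: each $J_i(f_n)$ is, by dominated convergence in the $\omega$ variable (the domains of integration depend continuously on $\omega$ and the majorant is locally bounded), uniformly continuous on $K$ with a modulus independent of $n$; together with pointwise convergence this yields uniform convergence on $K$ by a standard Arzelà–Ascoli-type argument.

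The main obstacle I expect is the equicontinuity step for the uniform-convergence claim: one must check that moving $\omega$ slightly changes the domains in \eqref{def:J1}--\eqref{def:J4} by a set whose $G$-measure is small uniformly in $n$. This requires a little care because the domains have faces like $\{\omega_3 = \omega_1 + \omega_2 - \omega\}$ that are unbounded, so "small perturbation of $\omega$" must be paired with the decay of $\mathfrak g$ at infinity (which is exactly where $\beta = 7/6 > 1$ is used, just as in the finiteness part of \cref{lem:localflux}). An alternative that sidesteps equicontinuity entirely: show directly that $\omega \mapsto J_i(f_n)(\omega)$ is bounded in $C^{0,\gamma}(K)$ uniformly in $n$ for some $\gamma>0$ by differentiating (or taking difference quotients) under the integral sign and re-running the locality estimates with one extra power of $\omega_i$ absorbed; compact embedding $C^{0,\gamma} \hookrightarrow C^0$ then promotes pointwise to uniform convergence. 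Either route is routine once the domination from (i) is in hand.
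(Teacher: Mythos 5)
Your proof of (i) is essentially the paper's: both bound $|f|\le \|f\|_{7/6,7/6}\,\omega^{-7/6}$, invoke \cref{lem:localflux} together with the exact scale invariance of $\omega^{-7/6}$ to see that each $J_k(\omega^{-7/6})(\omega)$ is a finite constant independent of $\omega$, and then deduce the bound on $\omega^{-1}J_E$ from \eqref{def:JE}. For (ii) you take a genuinely different route. The paper never needs equicontinuity: it splits $f$ and $f_n$ into the truncated piece $\indic(\epsilon<\omega<\epsilon^{-1})f$ and its tail, expands $J_k$ trilinearly, notes that every term containing a tail factor is small as $\epsilon\to 0$ uniformly in $n$ and in $\omega$ on a compact (by \cref{lem:localflux}), and that for fixed $\epsilon$ the fully truncated term converges uniformly in $\omega$ simply because its difference is controlled by a quantity of the form $\int_\epsilon^{1/\epsilon}|f_n-f|\,\dd\omega\to 0$, which does not depend on $\omega$; $J_E$ is then handled through \eqref{def:JE} using the $L^\infty$ bound of (i) to control the portion of $\int_0^\omega J_M(f_n)$ lying below the compact set. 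Your route instead combines dominated convergence at each fixed $\omega$ with a uniform-in-$n$ modulus of continuity for $\omega\mapsto J_i(f_n)(\omega)$, obtained from slab estimates on the symmetric differences of the domains \eqref{def:J1}--\eqref{def:J4}. This can be made to work: the slabs are treated by freezing the variable confined to a thin interval and re-running the locality bounds in the remaining two variables, where the factor $\min(\sqrt{\omega_1},\dots,\sqrt{\omega_4})$ and the decay exponent $7/6>1$ give uniform smallness, and pointwise convergence plus uniform equicontinuity indeed upgrades to uniform convergence on compacts. But it is the costlier option, since you must carry out these estimates separately for the four domains (in effect redoing \cref{lem:localflux} with one variable frozen), whereas the paper's truncation trick obtains uniformity in $\omega$ for free. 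One small point to repair in your write-up: $J_E$ is not given by a single integral over an $\omega$-dependent domain, so the equicontinuity/domination argument should be stated for $J_M$ only and then transferred to $J_E$ via \eqref{def:JE} together with the uniform bound \eqref{bd:JMJE}, as in the paper.
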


\begin{proof}

  We mentioned right before the corollary that the integrals defining
  $J_k(\mathfrak f_M)(\omega)$ for $k=1,2,3,4$ are convergent, with a
  value that does not depend on $\omega$. This directly implies part
  $(i)$ of the corollary.

  For the second one, fix $0<\omega_-<\omega_+$ and
  $k\in \{1,2,3,4\}$. Decompose any function $g$ as
  $g=g_1^\epsilon+g_2^\epsilon$ where
  $g_1^\epsilon=\indic (\epsilon<\omega<\epsilon^{-1})g$. Then
  $$
  J_k(f)-J_k(f_n)=\sum_{1\leq i_1,i_2,i_3\leq 2} J_k(f^\epsilon_{i_1},f^\epsilon_{i_2},f^\epsilon_{i_3})-J_k(f^\epsilon_{n,i_1},f^\epsilon_{n,i_2},f^\epsilon_{n,i_3})
  $$
  where $J_k(a,b,c)$ is given by the corresponding expression in
  \eqref{def:J1}-\eqref{def:J4}, with $f_1,f_2,f_3$ replaced by
  $a,b,c$ respectively. On the one hand, by \cref{lem:localflux}, if
  $(i_1,i_2,i_3)\neq (1,1,1)$ then
  $J_k(f^\epsilon_{i_1},f^\epsilon_{i_2},f^\epsilon_{i_3})\to 0$ and
  $J_k(f^\epsilon_{n,i_1},f^\epsilon_{n,i_2},f^\epsilon_{n,i_3})\to 0$
  uniformly for $\omega \in [\omega_-,\omega_+]$ and $n\in \mathbb
  N$. On the other hand, for each $\epsilon>0$, still by
  \cref{lem:localflux},
  $J_k(f^\epsilon_{n,1},f^\epsilon_{n,1},f^\epsilon_{n,1})\to
  J_k(f^\epsilon_{1},f^\epsilon_{1},f^\epsilon_{1})$ uniformly on
  $[\omega_-,\omega_+]$ as $n\to \infty$. Combining, we obtain that
  $J_M(f_n)\to J_M(f)$ uniformly on $[\omega_-,\omega_+]$. Using this
  convergence for any $0<\omega_-<\omega_+$, the identity
  \eqref{def:JM} and the first bound in \eqref{bd:JMJE} shows that
  $J_E(f_n)\to J_E(f)$ as well uniformly on any compact.
\end{proof}

\subsection{Cascades}

We are able to make sense of mass and energy cascades by considering
their fluxes in the framework of distributions. We define the space of
test functions $\mathcal D([0,\infty))$ as the set of all functions
$\varphi \in \mathcal C^\infty([0,\infty))$ such that $\varphi'$ has
compact support. This space can be equipped with a similar topology as
the space of usual test functions for usual distributions. Hence, we
abuse notations and still call distributions linear forms on
$\mathcal D$.

\begin{df} \label{def:mathcalCweak}

  For $f\in E_{7/6,7/6}$, we define $\sqrt{\omega}\mathcal C(f)$ and
  $\omega^{3/2} \mathcal C(f)$ in the distributional sense by:
  \begin{align} \label{def:Cweaksense}
    \forall \varphi \in \mathcal D, \qquad
    \langle \sqrt{\omega} \mathcal C(f),\varphi\rangle=\int_{0}^\infty J_M(f)(\omega)\, \varphi'(\omega)\, \dd\omega,\\
    \label{def:Cweaksense2} \forall \varphi \in \mathcal D, \qquad
    \langle \omega^{3/2} \mathcal C(f),\varphi\rangle=\int_{0}^\infty J_E(f)(\omega)\, \varphi'(\omega)\, \dd\omega.
  \end{align}
\end{df}

\begin{rem}
  These weak definitions $\sqrt{\omega} \mathcal C(f)$ and
  $\omega^{3/2} \mathcal C(f)$ for any $f\in E_{7/6,7/6}$ are natural
  because of \cref{cor:fluxes}. Indeed, let us approximate $f$ by a
  sequence $f_n$ of smooth and localised functions. Then for all
  $\varphi\in \mathcal D$ we have, on the one hand by item (ii) that
  $\langle \sqrt{\omega} \mathcal C(f_n),\varphi\rangle \rightarrow
  \langle \sqrt{\omega} \mathcal C(f),\varphi\rangle $, and on the
  other hand that
  $\langle \sqrt{\omega} \mathcal C(f_n),\varphi\rangle=\int \mathcal
  C(f_n)\varphi$ because of \eqref{id:massfluxduality}. Thus the
  formula \eqref{def:Cweaksense} is such that
  $ \langle \sqrt{\omega} \mathcal C(f),\varphi\rangle=\lim_{n\to
    \infty}\int \mathcal C(f_n)\varphi$ is the limit obtained from any
  approximation of $f$ by non-singular functions $f_n$. The same
  approximation holds true for $\omega^{3/2}f$ as well.
\end{rem}

Thanks to \eqref{id:KZflux}-\eqref{id:KZflux2} we have for the KZ
steady state $\mathfrak f$ that, in the sense of
\eqref{def:Cweaksense}-\eqref{def:Cweaksense2}:
$$
-\sqrt{\omega} \mathcal C(\mathfrak f)=j_M^*\delta_\infty-j_M^*\delta_0, \qquad -\omega^{3/2} \mathcal C(\mathfrak f)=0,
$$
where we introduced the Dirac deltas:
\begin{equation} \label{def:diractdeltas}
\forall \varphi \in \mathcal D, \qquad \langle \delta_0, \varphi\rangle =\varphi (0) \qquad \mbox{and} \qquad \langle \delta_\infty, \varphi\rangle =\lim_{\omega\to \infty}\varphi (\omega).
\end{equation}
This shows an indirect cascade of mass from $\infty$ to $0$ and no
cascade of energy.

The next proposition relates the fluxes of a solution to
$-\mathcal C(f)=\phi$ to its asymptotic behaviours near $0$ and
$\infty$.

\begin{prop} \label{pr:frompointwisetofluxes}
  Assume $f\in E_{7/6,7/6}$ and $\phi \in L^1(\sqrt{\omega}d\omega)$ solve
  $$
  \forall \omega>0, \qquad - \mathcal C(f)(\omega)=\phi(\omega),
  $$
  with
  $$
  f(\omega)=c_0 \omega^{-7/6}+o(\omega^{-7/6}) \mbox{ as }\omega \downarrow 0 \quad \mbox{and }\quad f(\omega)=c_\infty \omega^{-7/6}+o(\omega^{-7/6}) \mbox{ as }\omega \rightarrow \infty
  $$
  for some $c_0,c_\infty \in \mathbb R$. Then, in the sense of
  distributions given by \cref{def:mathcalCweak}:
  \begin{align} \label{id:weakformulation}
 &   -  \sqrt{\omega}\mathcal C(f)=c_\infty^3 j_M^* \delta_\infty-c_0^3 j_M^* \delta_0+\omega^{1/2}\phi,\\
  &  \label{id:weakformulationthird} - \omega^{3/2}\mathcal C(f)=\omega^{3/2}\phi-E(\phi) \delta_\infty
  \end{align}
  where the second identity holds provided
  $E(\phi)=\lim_{R\to \infty} \int_0^R \omega^{3/2}\phi \,\dd\omega$
  exists, and there holds:
  \begin{equation} \label{id:compatibilityweakformulation}
    c_0^3=c_\infty^3+\frac{M(\phi)}{j_M^*}.
  \end{equation}
  Moreover,
  \begin{equation} \label{id:weakformulationsecond}
    J_M(f)(\omega)=-c_0^3j_M^*+\int_0^\omega \tilde
    \omega^{1/2}\phi(\tilde \omega)\,\dd\tilde \omega ,
    \qquad J_E(f)(\omega)=\int_0^\omega \tilde \omega^{3/2}\phi(\tilde \omega)\,\dd\tilde \omega .
  \end{equation}
\end{prop}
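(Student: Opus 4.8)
The plan is to first pin down the mass flux $J_M(f)$ explicitly, then read off the compatibility relation \eqref{id:compatibilityweakformulation} and the energy flux $J_E(f)$, and finally obtain the distributional identities \eqref{id:weakformulation}--\eqref{id:weakformulationthird} by inserting these flux formulas into \cref{def:mathcalCweak} and integrating by parts.

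\textbf{An ODE for $J_M(f)$.} Since $7/6<5/4$ and $7/6>1$, the solution $f\in E_{7/6,7/6}$ is local by \cref{proplocal}, so $\mathcal C(f)$ is a genuine function. For $\varphi\in C_0^\infty((0,\infty))$ one has, applying \cref{cor:fluxes}(ii), \cref{thm:collision-trilinear-estimate} and \eqref{id:massfluxduality} to an approximating sequence of smooth localized functions $f_n\to f$ bounded in $E_{7/6,7/6}$, the identity $\int J_M(f)\,\varphi'=\int\sqrt\omega\,\mathcal C(f)\,\varphi$. Testing $-\mathcal C(f)=\phi$ then gives $\int J_M(f)\,\varphi'=-\int\sqrt\omega\,\phi\,\varphi$, i.e.\ $(J_M(f))'=\omega^{1/2}\phi$ in $\mathcal D'((0,\infty))$. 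As $\phi\in L^1(\sqrt\omega\,\dd\omega)$ we have $\omega^{1/2}\phi\in L^1((0,\infty))$, and $J_M(f)$ is bounded by \cref{cor:fluxes}(i) and continuous (the integrals \eqref{def:JM}--\eqref{def:J4} depend continuously on $\omega$ by dominated convergence, via \cref{lem:localflux}); hence $J_M(f)(\omega)=\ell_0+\int_0^\omega\tilde\omega^{1/2}\phi(\tilde\omega)\,\dd\tilde\omega$ with $\ell_0:=\lim_{\omega\downarrow0}J_M(f)(\omega)$, the limit existing because the integral tends to $0$, and correspondingly $\lim_{\omega\to\infty}J_M(f)(\omega)=\ell_0+M(\phi)$.

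\textbf{Endpoint values of $J_M(f)$ (the crux).} For $\lambda>0$ set $f_\lambda(\omega):=\lambda^{7/6}f(\lambda\omega)$; this scaling fixes $\mathfrak f$, obeys $\|f_\lambda\|_{7/6,7/6}\le 2\|f\|_{7/6,7/6}$, and, after a change of variables in \eqref{def:JM}--\eqref{def:J4}, satisfies
\[
  J_M(f_\lambda)(\omega)=J_M(f)(\lambda\omega).
\]
The asymptotics of $f$ give $f_\lambda\to c_0\mathfrak f$ pointwise on $(0,\infty)$ as $\lambda\downarrow 0$ and $f_\lambda\to c_\infty\mathfrak f$ pointwise as $\lambda\uparrow\infty$. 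Applying \cref{cor:fluxes}(ii) along an arbitrary sequence $\lambda_n\downarrow 0$, together with the cubic homogeneity of $J_M$ and \eqref{id:KZflux}, gives $J_M(f)(\lambda_n)=J_M(f_{\lambda_n})(1)\to J_M(c_0\mathfrak f)(1)=c_0^3\,J_M(\mathfrak f)(1)=-c_0^3 j_M^*$, so $\ell_0=-c_0^3 j_M^*$; this is the first identity in \eqref{id:weakformulationsecond}. The same argument along $\lambda_n\uparrow\infty$ yields $\lim_{\omega\to\infty}J_M(f)(\omega)=-c_\infty^3 j_M^*$, and comparing with $\ell_0+M(\phi)$ from the previous step produces $-c_\infty^3 j_M^*=-c_0^3 j_M^*+M(\phi)$, i.e.\ \eqref{id:compatibilityweakformulation}. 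Finally, substituting $J_M(f)(\omega)=-c_0^3 j_M^*+\int_0^\omega\tilde\omega^{1/2}\phi$ into \eqref{def:JE} and evaluating $\int_0^\omega\int_0^{\tilde\omega}$ by Fubini gives the second identity in \eqref{id:weakformulationsecond}, $J_E(f)(\omega)=\int_0^\omega\tilde\omega^{3/2}\phi(\tilde\omega)\,\dd\tilde\omega$ (well defined since $\tilde\omega^{3/2}\phi\in L^1_{loc}$).

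\textbf{The distributional identities.} Let $\varphi\in\mathcal D$; it is smooth on $[0,\infty)$ with $\varphi'$ compactly supported and $\varphi\equiv\varphi(\infty):=\lim_{\omega\to\infty}\varphi$ near $\infty$. By \cref{def:mathcalCweak}, $-\langle\sqrt\omega\,\mathcal C(f),\varphi\rangle=-\int_0^\infty J_M(f)\,\varphi'$; writing $J_M(f)=-c_0^3 j_M^*+P$ with $P(\omega)=\int_0^\omega\tilde\omega^{1/2}\phi$, using $\int_0^\infty\varphi'=\varphi(\infty)-\varphi(0)$, and integrating $\int_0^\infty P\varphi'$ by parts ($P(0)=0$, $P'=\omega^{1/2}\phi$, $P\to M(\phi)$, so it equals $M(\phi)\varphi(\infty)-\int_0^\infty\omega^{1/2}\phi\,\varphi$), one obtains
\[
  -\langle\sqrt\omega\,\mathcal C(f),\varphi\rangle=\bigl(c_0^3 j_M^*-M(\phi)\bigr)\varphi(\infty)-c_0^3 j_M^*\,\varphi(0)+\int_0^\infty\omega^{1/2}\phi\,\varphi .
\]
By \eqref{id:compatibilityweakformulation} the coefficient of $\varphi(\infty)$ is $c_\infty^3 j_M^*$, so \eqref{def:diractdeltas} gives \eqref{id:weakformulation}. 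Identity \eqref{id:weakformulationthird} follows in the same way from $J_E(f)(\omega)=\int_0^\omega\tilde\omega^{3/2}\phi$: the boundary term at $+\infty$ now reads $E(\phi)\varphi(\infty)$, which is precisely why the existence of $E(\phi)=\lim_{R\to\infty}\int_0^R\tilde\omega^{3/2}\phi$ is assumed, and the remaining term is $-\int_0^\infty\omega^{3/2}\phi\,\varphi$, understood as $\lim_{R\to\infty}\int_0^R$, which exists because $\varphi$ is eventually constant. The only non-routine point in the whole argument is the passage from the pointwise asymptotics of $f$ to the constant endpoint values of the nonlocal cubic flux; the self-similar rescaling $f\mapsto f_\lambda$ is what turns this into an application of \cref{cor:fluxes}(ii) at the single point $\omega=1$, with limit a pure power, and everything else is integration by parts, the only additional care concerning the non-compactly-supported test functions of $\mathcal D$ and, for the energy balance, the conditional convergence of $\int\tilde\omega^{3/2}\phi$.
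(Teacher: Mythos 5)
Your proposal is correct and follows essentially the same route as the paper: the crux in both is the rescaling $f_\lambda$ together with the locality/continuity of the fluxes (\cref{lem:localflux}, \cref{cor:fluxes}) and \eqref{id:KZflux} to identify the endpoint values $-c_0^3 j_M^*$ and $-c_\infty^3 j_M^*$ of $J_M(f)$. The only difference is bookkeeping order — you first integrate the local relation $(J_M(f))'=\omega^{1/2}\phi$ to get \eqref{id:weakformulationsecond} and then deduce \eqref{id:weakformulation}–\eqref{id:weakformulationthird} by integration by parts against $\varphi\in\mathcal D$, whereas the paper proves \eqref{id:weakformulation} first via an $\epsilon$-cutoff of the test function and reads off the fluxes afterwards — which is an equivalent reorganization.
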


\begin{proof}
  Let $f_\lambda (\omega)=\lambda^{-7/6}f(\omega/\lambda)$ for
  $\lambda>0$. Then $f_{\lambda}$ is bounded uniformly in
  $E_{7/6,7/6}$ for all $\lambda>0$, with
  $f\rightarrow c_0\omega^{-7/6}$ as $\lambda\to \infty$ and
  $f\rightarrow c_\infty\omega^{-7/6}$ as $\lambda \to 0$, in
  $L^\infty_{loc}((0,\infty))$. By \cref{lem:localflux} and
  \eqref{id:KZflux}, this implies $J(f_\lambda)(1)\to c_0^{3}j_M^*$
  and $J(f_\lambda)(1)\to c_\infty^{3}j_M^*$ respectively. Since by
  rescaling, $J(f)(\lambda)=J(f_{\lambda^{-1}})(1)$ we get
  \begin{equation} \label{id:weakformulationlimitfluxes}
    \lim_{\omega \downarrow 0} J(f)(\omega)= - c_0^{3}j_M^* \quad \mbox{and} \quad \lim_{\omega \rightarrow \infty} J(f)(\omega)= - c_\infty^{3} j_M^*.
  \end{equation}
  Fix now $\varphi \in \mathcal D$. Let $\chi$ be a smooth cut-off
  function with $\chi(\omega)=1$ for $\omega\leq 1$ and
  $\chi (\omega)=0$ for $\omega \geq 2$. For $R>0$ we let
  $\chi^R(\omega)=\chi(\omega/R)$. For $\epsilon>0$ we decompose
  $\varphi=\varphi^\epsilon_1+\varphi_2^\epsilon+\varphi^\epsilon_3$
  where $\varphi_1^\epsilon=\chi^\epsilon \varphi $ and
  $\varphi^\epsilon_3=(1-\chi^{1/\epsilon})\varphi$ so that
  \begin{equation} \label{id:weakformulation1}
    \langle \sqrt{\omega}\mathcal C(f),\varphi \rangle
    =\sum_{k=1}^3\langle \sqrt{\omega}\mathcal C(f),\varphi_k^\epsilon \rangle.
  \end{equation}
  On the one hand, by \eqref{def:Cweaksense} and
  \eqref{id:weakformulationlimitfluxes} we have:
  \begin{equation} \label{id:weakformulation2}
    \langle \sqrt{\omega}\mathcal C(f),\varphi_1^\epsilon \rangle \to c_0^3j_M^*\varphi(0) \qquad \mbox{and} \qquad \langle \sqrt{\omega}\mathcal C(f),\varphi_3^\epsilon \rangle \to - c_\infty^3j_M^* \lim_{\omega \to \infty}\varphi(\omega)
  \end{equation}
  as $\epsilon\to 0$. On the other hand, for each fixed $\epsilon>0$,
  the derivation of the formula \eqref{id:massfluxduality} is valid
  for $\varphi_2^\epsilon$ as it has compact support in $(0,\infty)$
  and using \cref{lem:localflux}, which combined with the identity
  $-\mathcal C(f)(\omega)=\phi(\omega)$ for all $\omega>0$ gives:
  \begin{equation*}
    \langle \sqrt{\omega}\mathcal C(f),\varphi_2^\epsilon \rangle
    = - \int_0^\infty \sqrt{\omega}\phi \varphi_3^\epsilon \,\dd\omega.
  \end{equation*}
  Hence
  $\langle \sqrt{\omega}\mathcal C(f),\varphi_2^\epsilon \rangle
  \rightarrow - \int_0^\infty \sqrt{\omega}\phi \varphi d\omega$ as
  $\epsilon \to 0$, which, combined with \eqref{id:weakformulation1}
  and \eqref{id:weakformulation2} implies \eqref{id:weakformulation}.

  The identity \eqref{id:compatibilityweakformulation} then directly
  follows upon using \eqref{id:weakformulation} with the test function
  $\varphi=1$.

  The identity \eqref{id:weakformulation} implies the first identity
  in \eqref{id:weakformulationsecond}. Injecting
  \eqref{id:compatibilityweakformulation} and the first identity in
  \eqref{id:weakformulationsecond} in the expression \eqref{def:JM}
  for $J_E$ shows the second identity in
  \eqref{id:weakformulationsecond}. This, in turns, injected in
  \eqref{def:Cweaksense2}, implies the identity
  \eqref{id:weakformulationthird} .
\end{proof}

\section{Proof of the main result}

This section is devoted to the proof of \cref{thm:main}. By scaling
invariance, it suffices to prove the result in the case $j_M^0=j^*_M$,
and the general result follows. We thus aim at solving
\begin{equation} \label{id:mainequation}
  \forall \omega>0, \qquad \qquad - \mathcal{C}(f)(\omega) =  \phi(\omega),
\end{equation}
in a neighborhood of $\mathfrak{f}(\omega) = \omega^{-7/6}$, and with
a forcing satisfying
\begin{equation} \label{id:mainmassinput}
  \| \phi \|_{3/2-\delta,3/2+\delta}  \leq \epsilon .
\end{equation}
Expressed in terms of the perturbation \(g\) around \(\mathfrak{f}\),
i.e.\  $\mathfrak{f}$, ie $f = \mathfrak{f} + g$, the equation
\eqref{id:mainequation} becomes
\begin{equation}
  \label{eq:main-perturbation}
  \mathcal{L} g - \mathcal{Q}(g) - \mathcal{C}(g) = \phi,
\end{equation}
where we denote $\mathcal{L}$, $\mathcal{Q}$ and $\mathcal{C}$ for the
linear, quadratic and cubic terms. Using the power-law behaviour of
\(\mathfrak f\) the linear operator can be expressed as
\begin{equation}
  \label{eq:linearised-op}
  \mathcal{L} g(\omega)
  = \frac{a}{\omega^{1/3}} g
  - \frac{1}{\omega^{4/3}}
  \int_0^\infty k \left( \frac{r}{\omega} \right) f(r) \,\dd r
\end{equation}
where \(a>0\) and \(k\) can be expressed by explicit but lengthy
integrals as in
\cite[Appendix~A]{escobedo-mischler-velazquez-2007-uehling-uhlenbeck}. Using
the notation of \eqref{id:polarized}, the quadratic term can be
expressed as
\begin{equation*}
  \mathcal{Q}(g) = 3 \mathcal{C}(\mathfrak f,g,g).
\end{equation*}

This equation is more naturally expressed in the variables of
\eqref{eq:change-omega-ex} with
\begin{align*}
  \omega = e^x, \qquad g(k) = G(x), \qquad \phi(\omega) = \Phi(x), \qquad \mathcal{K}(x) = e^{-x} k(e^{-x})
\end{align*}
so that \eqref{eq:main-perturbation} becomes
\begin{equation}
  \label{eq:main-perturbation-x}
  {\mathcal{L}} G - e^{x/3}{\mathcal{Q}}(G) - e^{x/3}{\mathcal{C}}(G) = e^{x/3} \Phi,
\end{equation}
where
\begin{align*}
  {\mathcal{L}} &= a - \mathcal{K} *, \\
  {\mathcal{Q}}(G)(x) &= \mathcal{Q}(g)(\omega), \\
  {\mathcal{C}}(G)(x) &= \mathcal{C}(g)(\omega).
\end{align*}

\begin{nota}
  The reader will have noticed that we abuse notations by denoting
  $\mathcal{Q}$ and $\mathcal{C}$ for the quadratic and cubic
  operators, regardless of the coordinate choice being considered
  ($\omega$ or $x$). But this will be clear from the context, and
  upper case unknowns are always functions of $x$, while lower case
  indicates functions of $\omega$.
\end{nota}

Inverting ${\mathcal{L}}$, we arrive at the equivalent fixed-point
problem
\begin{equation}
  \label{fixedpoint}
  G = {\mathcal{L}}^{-1} e^{x/3} \left[  \Phi +  {\mathcal{Q}}(G) + {\mathcal{C}}(G)  \right].
\end{equation}

\begin{rem}
  The naive approach would be to apply the Banach fixed-point theorem
  in a space containing the constant term
  ${\mathcal{L}}^{-1} e^{x/3} \Phi$. Due to the constant term in
  \(\mathcal L\), one expects that \(\mathcal L^{-1}\) is a bounded
  operator from some \(E_{\alpha,\beta}\) to itself. For controlling
  the cubic term we thus would arrive to the following series of inequalities
  \begin{equation*}
    \| {\mathcal{L}}^{-1} e^{x/3} {\mathcal{C}}(G) \|_{\alpha,\beta}
    \lesssim \| e^{x/3} {\mathcal{C}}(G) \|_{\alpha,\beta}
    = \| \mathcal{C}(G) \|_{\alpha+\frac{1}{3},\beta+\frac 1 3} \lesssim \| G \|_{\alpha,\beta}^3.
  \end{equation*}
  where the last step only holds if \(\beta \ge 7/6\) by
  \cref{propC}. The same series of inequalities is needed for the
  quadratic term, where the last step only holds if
  \(\alpha \le 7/6\). But the function \(e^{\frac 76 x}\) is in the kernel of
  the linear operator \(\mathcal L\), which therefore cannot be inverted.

Of course, this obstacle should have been expected: it is related to the scaling invariance of the problem, or equivalently the need to modulate along the family of solutions $c \omega^{-7/6}$.
\end{rem}

The argument can be salvaged by improving it in
two respects: first, by peeling off the leading order behavior, we
find an inverse \(\mathcal L^{-1}\) from \(E_{\alpha,\beta}\) to
itself for \(\alpha=7/6-\delta_1\) and \(\beta=7/6+\delta_2\) and
second, by using the cancellations present in the equation, control
the leading order contribution.

We introduce $\chi$ a smooth cut-off function with $\chi(x)=1$ for
$x\geq 1$ and $\chi(x)=0$ for $x\leq 0$ and
\begin{equation*}
  G_0(x) = \chi(x) e^{-\frac{7}{6}x}.
\end{equation*}
In \cref{sec:linearised}, we will show that \(\mathcal L^{-1}\) can be
decomposed in the leading order.

\begin{prop}\label{lem:mainproofforcing}

  Let $0<\delta_1,\delta_2<1/6$. Then for any
  $\Phi \in E_{3/2-\delta_1,3/2+\delta_2}$, it is possible to
  decompose ${\mathcal{L}}^{-1} \left[ e^{x/3} \Phi \right]$ into
  leading order (as $x \to \infty$) plus remainder as
  \begin{equation*}
    {\mathcal{L}}^{-1} \left[ e^{x/3} \Phi \right] = c_1 M(\Phi) G_0(x) + \ell_1(\Phi),
  \end{equation*}
  where $c_1=-\frac{1}{3j_M^*}$, \(M(\Phi)\) is the mass integral
  \eqref{eq:mass-energy-integral} and \(\ell_1\) is a bounded linear
  operator from \(E_{3/2-\delta_1,3/2+\delta_2}\) to
  $E_{7/6-\delta_1, 7/6+\delta_2}$.
\end{prop}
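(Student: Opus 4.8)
The plan is to diagonalise $\mathcal{L}$ by Fourier--Laplace transform in the variable $x$, where $\mathcal{L} = a - \mathcal{K}*$ is a convolution operator and hence acts on exponentials by $\mathcal{L}(e^{sx}) = m(s)\,e^{sx}$, with symbol $m(s) = a - \widehat{\mathcal{K}}(s)$, $\widehat{\mathcal{K}}(s) = \int_{\R}\mathcal{K}(z)\,e^{-sz}\,\dd z$. I would use the following facts about $m$, to be read off from the explicit formulas for $k$: it is holomorphic and decays rapidly at imaginary infinity on the open strip $\{\operatorname{Re}s > -4/3\}$ (equivalently $\mathcal{K}(z) = O(e^{-4z/3})$ as $z\to+\infty$, i.e.\ $k(r)=O(r^{1/3})$ as $r\downarrow 0$, and $\mathcal{K}$ smooth and decaying at $-\infty$); it vanishes at $s_0=-7/6$, which is exactly the identity $\int_{\R}\mathcal{K}(z)e^{7z/6}\,\dd z = a$, i.e.\ $\mathcal{L}\mathfrak{f}=0$; and this zero is \emph{simple} and is the \emph{only} zero of $m$ on the closed strip $\{-4/3\le\operatorname{Re}s\le -1\}$. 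The hypotheses $0<\delta_1,\delta_2<1/6$ enter precisely here, since they place the decay rates $7/6-\delta_1$ and $7/6+\delta_2$ strictly inside $(1,4/3)$.

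First I would record the bookkeeping: $\Phi\in E_{3/2-\delta_1,3/2+\delta_2}$ gives $e^{x/3}\Phi\in E_{7/6-\delta_1,7/6+\delta_2}$ together with $|M(\Phi)|\lesssim\|\Phi\|_{3/2-\delta_1,3/2+\delta_2}$, while $G_0=\chi(x)e^{-7x/6}$ satisfies $\mathcal{L}G_0\in E_{7/6-\delta_1,7/6+\delta_2}$, since $\mathcal{L}G_0(x)=O(e^{-4x/3})$ as $x\to+\infty$ and is bounded as $x\to-\infty$ (the cutoff in $G_0$ is what kills the $e^{-7x/6}$ growth at $-\infty$). One also checks by Fubini, using $\int|\mathcal{K}(z)|e^{7z/6}\,\dd z<\infty$ and $\int|H(x)|e^{7x/6}\,\dd x<\infty$ for $H\in E_{7/6-\delta_1,7/6+\delta_2}$, that $\int_{\R}e^{7x/6}\,\mathcal{L}H\,\dd x=0$ for every such $H$ (linearised mass conservation) — whereas this fails for $H=G_0$, which is the source of the correction term. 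One then seeks the solution of $\mathcal{L}G=e^{x/3}\Phi$ in the form $G=c_1 M(\Phi)\,G_0+G_1$; pairing the equation with $e^{7x/6}$ and using the vanishing above for $G_1$ forces
\[
  c_1=\Big(\int_{\R}e^{7x/6}\,\mathcal{L}G_0\,\dd x\Big)^{-1}.
\]
The denominator equals $m'(s_0)$: $\widehat{\mathcal{L}G_0}$ is holomorphic on $\{-4/3<\operatorname{Re}s<0\}$, so its value at $s_0$ is $\int e^{7x/6}\mathcal{L}G_0\,\dd x$, and on $\{\operatorname{Re}s>-7/6\}$ it coincides with $m(s)\widehat{G_0}(s)$, which has a removable singularity at $s_0$ with value $m'(s_0)$ since $\widehat{G_0}$ has residue $1$ there. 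Undoing the change of variables, $\int e^{7x/6}\mathcal{L}G_0\,\dd x=\int_0^\infty\omega^{1/2}\mathcal{L}g_0\,\dd\omega$ with $g_0$ a smooth cutoff of $\mathfrak{f}$ near $\omega=\infty$, and linearising $\int\sqrt{\omega}\,\mathcal{C}(f)\varphi=\int J_M(f)\varphi'$ about $\mathfrak{f}$, tested against $\varphi=1-\chi(\cdot/R)$ as $R\to\infty$ and using $J_M(\mathfrak{f})\equiv -j_M^*$, gives $\int_0^\infty\omega^{1/2}\mathcal{L}g_0\,\dd\omega=-3j_M^*$. Hence $m'(s_0)=-3j_M^*\ne 0$ and $c_1=-\frac{1}{3j_M^*}$, as claimed, while $G_1$ must solve $\mathcal{L}G_1=\widetilde H$ with $\widetilde H:=e^{x/3}\Phi-c_1M(\Phi)\,\mathcal{L}G_0\in E_{7/6-\delta_1,7/6+\delta_2}$, $\|\widetilde H\|\lesssim\|\Phi\|_{3/2-\delta_1,3/2+\delta_2}$, and $\int e^{7x/6}\widetilde H\,\dd x=0$.

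It remains to invert $\mathcal{L}$ on a source with vanishing $e^{7x/6}$-moment, which I would do by peeling off the pole of $1/m$. Write $\frac{1}{m(s)}=\frac{1}{a}+\frac{r_0}{s-s_0}+p(s)$ with $r_0=\frac{1}{m'(s_0)}$ and $p(s):=\frac{1}{m(s)}-\frac{1}{a}-\frac{r_0}{s-s_0}$; using $\widehat{\mathcal{K}}(s_0)=a$ one checks $p$ is holomorphic across $s_0$, hence on a strip $\{-(7/6+\delta_2+\eta)<\operatorname{Re}s<-(7/6-\delta_1-\eta)\}$ for some small $\eta>0$ (this is where the absence of other zeros and $\delta_i+\eta<1/6$ are used), and decays at imaginary infinity, so $p=\widehat b$ for a smooth kernel $b$ with $|b(z)|\lesssim e^{-(7/6+\delta_2+\eta)z}$ for $z>0$ and $|b(z)|\lesssim e^{-(7/6-\delta_1-\eta)z}$ for $z<0$. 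Setting
\[
  \mathcal{L}^{-1}\widetilde H:=\tfrac{1}{a}\widetilde H+r_0\,K_0*\widetilde H+b*\widetilde H,\qquad K_0(z):=e^{-7z/6}\indic_{z>0},
\]
one verifies on the symbol side that $\mathcal{L}\,\mathcal{L}^{-1}\widetilde H=\widetilde H$, and each term lies in $E_{7/6-\delta_1,7/6+\delta_2}$ with norm $\lesssim\|\widetilde H\|$: the first trivially; $b*\widetilde H$ because $b$ is localised slightly more strongly than the weight of $E_{7/6-\delta_1,7/6+\delta_2}$; and the middle term equals $r_0 e^{-7x/6}\int_{-\infty}^x e^{7y/6}\widetilde H(y)\,\dd y$, where the cancellation $\int e^{7y/6}\widetilde H=0$ lets one rewrite $\int_{-\infty}^x$ as $-\int_x^\infty$ and conclude it is $O(e^{-(7/6+\delta_2)x})$ as $x\to+\infty$ (no spurious $e^{-7x/6}$ tail), while at $x\to-\infty$ it is directly $O(e^{-(7/6-\delta_1)x})$. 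Taking $\ell_1(\Phi):=G_1=\mathcal{L}^{-1}\big(e^{x/3}\Phi-c_1M(\Phi)\mathcal{L}G_0\big)$ then yields the asserted bounded linear operator $E_{3/2-\delta_1,3/2+\delta_2}\to E_{7/6-\delta_1,7/6+\delta_2}$, and $\mathcal{L}^{-1}[e^{x/3}\Phi]=c_1M(\Phi)G_0+\ell_1(\Phi)$.

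The step I expect to be the main obstacle is the package of spectral facts about $m$ assumed at the outset. Holomorphy and rapid imaginary decay on a strip reaching past $-4/3$ follow from the explicit (lengthy) formulas of \cite[Appendix~A]{escobedo-mischler-velazquez-2007-uehling-uhlenbeck}, but ruling out \emph{any} other zero of $m$ on the full two-dimensional closed strip $\{-4/3\le\operatorname{Re}s\le -1\}$ — not merely on the real segment — which is exactly what dictates the admissible range $\delta_1,\delta_2<1/6$, and pinning down $m'(-7/6)$, are the delicate points; the former presumably combines convexity of $c\mapsto\widehat{\mathcal{K}}(c)$ on the real axis (a sign of the kernel) with the imaginary-decay bound, and the latter is the linearised-flux computation sketched above. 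A secondary subtlety, already handled by the decomposition, is that $\Phi$ carries no regularity at all, so a naive Fourier inversion would control $\ell_1(\Phi)$ only in an $L^2$- or $L^1$-type norm rather than in the sup-type space $E_{7/6-\delta_1,7/6+\delta_2}$; this is why one keeps $\frac{1}{a}\widetilde H$ untouched, treats the pole term as an explicit one-sided convolution, and lets the transform produce only the genuinely smoothing remainder $b$.
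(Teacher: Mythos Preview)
Your approach is correct and closely parallel to the paper's, but organised differently. Both hinge on the same spectral input (holomorphy of $m=a-\widehat{\mathcal K}$ on the strip, the unique simple zero at $s_0=-7/6$, and decay of $\widehat{\mathcal K}$ at imaginary infinity), which the paper imports from \cite{escobedo-mischler-velazquez-2007-uehling-uhlenbeck} exactly as you anticipate. The paper, however, does not pre-subtract a moment: it invokes the Paley--Wiener theorem for Volterra equations to produce two resolvents $R_\pm\in L^1(e^{b_\pm x}\,\dd x)$ with $b_\pm=7/6\mp\delta_{1,2}$, observes by contour shifting that $R_-(x)=R_+(x)+ac_1 e^{-7x/6}$ (residue at $s_0$), and then writes $\mathcal L^{-1}\Psi=\tfrac1a\Psi+\tfrac1a(\int_{y<0}R_-+\int_{y>0}R_+)\Psi(x-y)\,\dd y+c_1\int_{y>0}e^{-7y/6}\Psi(x-y)\,\dd y$; the last integral is then split into $c_1 M(\Phi)G_0$ plus a remainder. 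Your partial-fraction route $\tfrac1m=\tfrac1a+\tfrac{r_0}{s-s_0}+p$ together with the moment cancellation on $\widetilde H$ is an equivalent bookkeeping of the same residue; your $K_0*\widetilde H$ with the vanishing-moment trick is precisely what the paper's $\int_{y>0}e^{-7y/6}\Psi$ term becomes after the leading order is extracted.

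One genuine difference: you compute $c_1=-\tfrac{1}{3j_M^*}$ directly by identifying $m'(s_0)=\int e^{7x/6}\mathcal L G_0\,\dd x=\int_0^\infty\omega^{1/2}\mathcal L g_0\,\dd\omega$ and evaluating the latter via the linearised mass flux. The paper instead leaves $c_1$ as the abstract residue throughout the proof of the proposition and determines its value \emph{a posteriori} in a separate remark, by applying the completed Theorem~\ref{thm:main} to a one-parameter family of forcings and matching against the mass-balance identity~\eqref{th:id:massbalance}. Your route is more self-contained.

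Two minor imprecisions worth flagging. First, $\mathcal L G_0(x)$ is $O(e^{-x})$ as $x\to-\infty$, not bounded (since $|\mathcal K(x)|\lesssim e^{-x}$ there is only an upper bound, not decay); this is still comfortably in $E_{7/6-\delta_1,\cdot}$ because $\delta_1<1/6$. Second, the kernel $b$ is not smooth: $k(\lambda)$ has a $|\lambda-1|^{-5/6}$ singularity at $\lambda=1$, so $\mathcal K$ is singular at the origin and $\widehat{\mathcal K}(s)$ (hence $p(s)$) decays only like $|\Im s|^{-1/6}$ at imaginary infinity, too slowly for a pointwise Fourier-inversion bound on $b$. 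What you actually need, and what the paper obtains, is $b\in L^1$ with the exponential weights; this comes not from decay of $p$ but from the Wiener--L\'evy/Paley--Wiener machinery (your $b$ equals $\tfrac1a R_+-r_0K_0$, and $R_+\in L^1(e^{b_+x}\,\dd x)$ is exactly the paper's resolvent). With that correction your convolution estimates go through unchanged.
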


The idea is now to extract the leading order term from the left and
right-hand sides of~\eqref{fixedpoint}: to this effect, we split the
sought perturbation $G$ into
\begin{equation*}
  G =  c G_0 + H.
\end{equation*}
Using \cref{lem:mainproofforcing}, we express the right-hand side of
\eqref{fixedpoint} for $F \in E_{\alpha,\beta}$ with
\(\alpha=3/2-\delta_1\) and \(\beta=3/2+\delta_2\) as
\begin{equation*}
  \mathcal{L}^{-1} e^{x/3} F = \ell_0(F) G_0 + \ell_1(F).
\end{equation*}
where \(\ell_0(F) = c_1 M(F)\). Introducing
\begin{equation} \label{id:defell}
  \ell =  \begin{pmatrix} \ell_0 \\ \ell_1 \end{pmatrix}
\end{equation}
we can now write the fixed point problem as
\begin{equation*}
  \begin{pmatrix} c \\ H \end{pmatrix}
  = \begin{pmatrix} \ell_0 \\ \ell_1 \end{pmatrix}
  ( \Phi +  {\mathcal{Q}}(G) + {\mathcal{C}}(G) )
  = \ell ( \Phi) + \ell (N(c,H)),
\end{equation*}
where
\begin{align*}
  N(c,H) & = {\mathcal{Q}}(G) + {\mathcal{C}}(G) \\
         & = c^2 \mathcal{Q}(G_0) +\mathcal{Q}(H)
           + 2 c \mathcal{Q}(G_0,H) + c^3 \mathcal C (G_0)
           +\mathcal{C}(H) + 3 c^2 \mathcal{C}(G_0,G_0,H)
           + 3 c \mathcal{C}(G_0,H,H).
\end{align*}

Adopting the natural norm on $\mathbb{R} \times E_{\alpha,\beta}$
\begin{equation*}
  \| (c,H) \|_{\alpha,\beta} = |c| + \| H \|_{\alpha,\beta},
\end{equation*}
the nonlinear term acts as a contraction:

\begin{prop}\label{pr:mainproofnonlinear}
 Assume
  $c\in \mathbb R$ and $H\in E_{\alpha,\beta}$ with
  $\alpha=7/6-\delta_1$ and $\beta=7/6+\delta_2$ where
  $0<\delta_1,\delta_2<1/12$. Then
  $\ell (N(c,H))\in E_{\alpha,\beta+\delta_2}$ with:
  \begin{equation} \label{bd:nonlinearterms}
    \left\| \ell (N(c,H)) \right\|_{\alpha+\delta_1,\beta} \lesssim \| (c,H) \|_{\alpha,\beta}^2 + \| (c,H) \|_{\alpha,\beta}^3.
  \end{equation}
\end{prop}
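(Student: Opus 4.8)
The plan is to estimate each of the seven terms appearing in $N(c,H)$, compose with the bounded linear operator $\ell$, and sum. The overarching principle is the chain of inequalities sketched in the remark before the statement: for any bilinear/trilinear piece $\mathcal{C}(A,B,C)$ (or quadratic $\mathcal{Q}$) built from the collision operator, one has, using $\ell = (\ell_0,\ell_1)$ with $\ell_0 = c_1 M(\cdot)$ and $\ell_1$ bounded $E_{3/2-\delta_1,3/2+\delta_2}\to E_{7/6-\delta_1,7/6+\delta_2}$ by \cref{lem:mainproofforcing},
\begin{equation*}
  \|\ell(\mathcal{C}(A,B,C))\|_{\alpha+\delta_1,\beta}
  \lesssim |M(\mathcal{C}(A,B,C))| + \|\mathcal{C}(A,B,C)\|_{3/2-\delta_1,3/2+\delta_2},
\end{equation*}
so it suffices to bound $\mathcal{C}(A,B,C)$ in $E_{3/2-\delta_1,3/2+\delta_2}$ and to bound its mass integral. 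The norm bound follows from the trilinear estimate \cref{propC}/\cref{thm:collision-trilinear-estimate} applied with appropriate exponents: writing $\mathcal{C}$ in the $x$-variable costs a factor $e^{x/3}$, i.e.\ shifts exponents by $1/3$, so one needs the inputs to land in spaces whose exponents, after adding $1/3$ to each, give at least $3/2-\delta_1$ at zero and at most... wait — one needs the \emph{output} exponents $(3/2-\delta_1,3/2+\delta_2)$, equivalently $\mathcal{C}$ itself in $(7/6-\delta_1,7/6+\delta_2)$, which by \cref{propC} requires each input factor to have decay exponent $\ge 7/6$ at infinity and $\le 7/6$ at zero — precisely the borderline case. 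This is why $H$ is measured in $E_{7/6-\delta_1,7/6+\delta_2}$ and $G_0 \sim e^{-7x/6}$ sits exactly at the threshold.

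First I would record the mapping properties of the reference profile: $G_0 = \chi(x)e^{-7x/6}$ lies in $E_{\alpha',7/6}$ for \emph{every} $\alpha'$ (it is supported in $x\ge 0$), in particular in $E_{\alpha,\beta}$, and $\mathcal{Q}(G_0) = 3\mathcal{C}(\mathfrak f,G_0,G_0)$, $\mathcal{C}(G_0)=\mathcal{C}(G_0,G_0,G_0)$ are computed using that $\mathfrak f = e^{-7x/6}$ exactly and the basic cancellation of \cref{basiccancellation} (the KZ profile is stationary). The key point is that $\mathcal{C}(\mathfrak f,\mathfrak f,\mathfrak f)=0$ pointwise, and $G_0$ differs from $\mathfrak f$ only the cutoff region near $x=0$ (i.e.\ $\omega \sim 1$), so $\mathcal{C}(G_0)$ and $\mathcal{Q}(G_0)$ are \emph{compactly supported-ish} — they decay fast at infinity because there the cancellation is exact, and are bounded near zero. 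Hence $c^2\mathcal{Q}(G_0)$ and $c^3\mathcal{C}(G_0)$ contribute $O(|c|^2 + |c|^3)$ to the $E_{3/2-\delta_1,3/2+\delta_2}$ norm and to the mass integral, which is the $\|(c,H)\|^2 + \|(c,H)\|^3$ on the right-hand side. Then I would treat the purely-$H$ terms $\mathcal{Q}(H) = 3\mathcal{C}(\mathfrak f,H,H)$ and $\mathcal{C}(H)=\mathcal{C}(H,H,H)$ by direct application of \cref{propC}: $\|\mathcal{C}(H,H,H)\|_{7/6-\delta_1,7/6+\delta_2} \lesssim \|H\|_{7/6-\delta_1,7/6+\delta_2}^3$ (here one needs $7/6+\delta_2$ compatible with locality, which holds for $\delta_2<1/12<1/4$), giving an $O(\|H\|^3)$ and $O(\|H\|^2)$ contribution. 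The mixed terms $\mathcal{Q}(G_0,H)$, $\mathcal{C}(G_0,G_0,H)$, $\mathcal{C}(G_0,H,H)$ are handled the same way, using $G_0 \in E_{\alpha,\beta}$ with the relevant exponents and multilinearity of \cref{propC}, producing the remaining $O(|c|\|H\| + |c|^2\|H\| + |c|\|H\|^2)$ terms.

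The main obstacle — and the reason the hypothesis $\delta_1,\delta_2<1/12$ (rather than $<1/6$) appears — is the borderline behavior of the terms containing $G_0$ or $\mathfrak f$ as a factor: $\mathfrak f = e^{-7x/6}$ sits \emph{exactly} on the threshold $\beta = 7/6$ where \cref{propC} is sharp, so naively $\mathcal{C}(\mathfrak f, H, H)$ would only be borderline-bounded, with a logarithmic loss. The cancellation structure must be exploited: one must show that near $\omega \to \infty$ the quadratic-in-$H$ collision term $\mathcal{C}(\mathfrak f,H,H)$ gains the extra $e^{-\delta_2 x}$ decay from the decay of $H$, and similarly near $\omega\to 0$ an extra $e^{\delta_1 x}$, and that the leading-order (non-decaying) part is exactly the piece that gets absorbed into the $c_1 M(\cdot)G_0$ modulation via $\ell_0$ — i.e.\ $M(\mathcal{C}(\mathfrak f,H,H))$ is finite and controlled (this uses the weak formulation / \cref{lemweak}-type cancellation in $\varphi_3+\varphi_4-\varphi_1-\varphi_2$ with $\varphi \equiv$ the mass weight). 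Concretely I would split $\mathcal{C} = \mathcal{C}^1+\mathcal{C}^2+\mathcal{C}^3$ along the minimal frequency, as set up in \cref{sec:criticalpowers}, and in each region track where the $1/12$ threshold is saturated; the choice $\delta<1/12$ is exactly what makes every exponent that arises strictly subcritical, so no logarithms need tracking (as the Optimality remark states). The finiteness of $M(N(c,H))$ — needed so $\ell_0 = c_1 M(\cdot)$ is even defined — is itself a nontrivial point requiring the weak cancellation, and is where I would expect to spend the most care.
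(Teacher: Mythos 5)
Your overall architecture is the paper's: bound $N(c,H)$ in a space $E_{3/2-\delta_1',3/2+\delta_2'}$ (its mass then being finite automatically) and apply \cref{lem:mainproofforcing}, with the cancellation $\mathcal C(\mathfrak f)=0$ reserved for the pure-$G_0$ terms; your second paragraph, treating $c^2\mathcal Q(G_0)+c^3\mathcal C(G_0)$ via the compact support of $\mathfrak f-G_0$, is exactly \cref{pr:G0estimates}. The genuine gap is in your final paragraph, which misidentifies where the difficulty lies and leaves the core estimates undone. You assert that $\mathcal Q(H)=3\mathcal C(\mathfrak f,H,H)$ and the mixed terms are borderline with a logarithmic loss, that a ``non-decaying leading-order part'' of them must be absorbed into the modulation, and that even the finiteness of $M(N(c,H))$ requires a weak-formulation cancellation in the spirit of \cref{lemweak}. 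None of this is needed, and none of it is how the bound is obtained. Since $\mathfrak f\in E_{7/6,7/6}$ lies strictly inside the locality range ($7/6<5/4$ at zero, $7/6>1$ at infinity), the absolutely convergent trilinear estimate \cref{thm:collision-trilinear-estimate}, in the form of \cref{propQ} and \cref{propC}, applies directly; the strict gain at infinity comes from the extra $\delta_2$ decay of $H$ itself, giving for instance $\beta'=\min(5/3,\beta+1/2,2\beta-5/6)=3/2+2\delta_2$ for $\mathcal Q(H)$, $\beta'=3/2+\delta_2$ for $\mathcal Q(G_0,H)$ and $\mathcal C(G_0,G_0,H)$, $\beta'=3/2+2\delta_2$ for $\mathcal C(G_0,H,H)$, and $\beta'=3/2+3\delta_2$ for $\mathcal C(H)$, with analogous exponents $3/2-O(\delta_1)$ at zero. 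The hypotheses $\alpha_i\neq1$, $\beta_f+\beta_g\neq 5/2$ together with $\delta_1,\delta_2<1/12$ keep every exponent strictly noncritical, so there are no logarithms to beat and no cancellation to exploit in any $H$-dependent term. Once $N(c,H)\in E_{3/2-2\delta_1,3/2+\delta_2}$ is established, $M(N(c,H))$ is finite trivially, since $\omega^{1/2}$ times such a function is integrable; $\ell_0$ poses no separate problem.

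In short, the only term for which the naive multilinear bound fails is $N_1(c)=c^2\mathcal Q(G_0)+c^3\mathcal C(G_0)$, where \cref{propC}--\cref{propQ} give only $\omega^{-3/2}$ decay at infinity, which is insufficient for \cref{lem:mainproofforcing}; this is repaired by $\mathcal C(\mathfrak f)=0$ as you describe. For every $H$-dependent term the proof is the exponent bookkeeping you did not carry out, and the alternative mechanism you sketch for them (splitting along the minimal frequency and using the $\varphi_3+\varphi_4-\varphi_1-\varphi_2$ cancellation, absorbing a non-decaying piece into $\ell_0$) is both unnecessary and unspecified: as written it would not yield the pointwise weighted bound in $E_{3/2-\delta_1,3/2+\delta_2}$ that the inversion of the linearized operator requires, and it contradicts your own (correct) claim two paragraphs earlier that these terms follow from the multilinear estimates directly.
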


\begin{proof}
  We decompose
  $N(c,H) =N_1(c)+N_2(c,H)+\mathcal C(H) $ where
  \begin{align*}
    N_1(c) &= c^2 \mathcal{Q}(G_0) +c^3 \mathcal C (G_0) ,\\
    N_2(c,H) &= \mathcal{Q}(H) + 2 c \mathcal{Q}(G_0,H) + 3 c^2 \mathcal{C}(G_0,G_0,H) + 3 c \mathcal{C}(G_0,H,H).
  \end{align*}
  Using the nonlinear estimates from \cref{sec:nonlinear-estimates},
  we can estimate the different terms.

  By \cref{pr:G0estimates}, we conclude that $N_1(c)\in E_{7/6,5/3}$ with
  \begin{equation} \label{bd:nonlinearN1}
    \| N_1(c)\|_{7/6,5/3}\lesssim |c|^2+|c|^3.
  \end{equation}
  We now estimate all terms in $N_2$. We apply \cref{propQ} and obtain
  using $0<\delta_1,\delta_2<1/12$ that
  $\mathcal Q(H)\in E_{\alpha_1',\beta_1'}$,
  $\mathcal Q(G_0,H)\in E_{\alpha_2',\beta_2'}$,
  $\mathcal C(G_0,G_0,H)\in E_{\alpha_3',\beta_43}$ and
  $\mathcal C(G_0,H,H)\in E_{\alpha_4',\beta_4'}$ with:
  \begin{align*}
    & \alpha_1'=\frac 32-2\delta_1,
    & \beta_1'&=\frac 32+2\delta_2,
    & \| \mathcal Q(H)\|_{\alpha_1',\beta_1'} &\lesssim \| H\|_{\alpha,\beta}^2,\\
    & \alpha_2'=\frac 43-\delta_1,
    & \beta_2'&=\frac 32+\delta_2,
    & \| \mathcal Q(G_0,H)\|_{\alpha_2,\beta_2} &\lesssim \| H\|_{\alpha,\beta}, \\
    & \alpha_3'=\frac 76-\delta_1,
    & \beta_3'&=\frac 32 +\delta_2,
    & \| \mathcal C(G_0,G_0,H)\|_{\alpha_3',\beta_3'} &\lesssim  \| H\|_{\alpha,\beta}, \\
    & \alpha_4'=\frac 43-\delta_1,
    & \beta_4'&=\frac 32+2\delta_2,
    & \|  \mathcal C(G_0,H,H)\|_{\alpha_4',\beta_4'} &\lesssim \| H\|_{\alpha,\beta}^2.
  \end{align*}
  Combining, we obtain $N_2(c,H) \in E_{3/2-2\delta_1,3/2+\delta_2}$ with
  \begin{equation} \label{bd:nonlinearN2}
    \| N_2(c,H)\|_{E_{3/2-2\delta_1,3/2+\delta_2}}
    \lesssim \| H\|_{\alpha,\beta}\left(|c|+c^2+\| H\|_{\alpha,\beta}+|c|\| H\|_{\alpha,\beta}\right)
  \end{equation}
  We finally apply \cref{propC} using $0<\delta_1,\delta_2<1/12$ that
  $\mathcal C(H)\in E_{\alpha_5',\beta_5'}$ with
  \begin{equation} \label{bd:nonlinearCH}
    \alpha_5'=\frac 32-3\delta_1,
    \qquad \beta_5'=\frac 32+3\delta_2,
    \qquad  \|  \mathcal C(H)\|_{\alpha_5',\beta_5'}\lesssim \| H\|_{\alpha,\beta}^3.
  \end{equation}
  The three estimates \eqref{bd:nonlinearN1}, \eqref{bd:nonlinearN2}
  and \eqref{bd:nonlinearCH} imply
  $N(c,H)\in E_{3/2-2\delta_1,3/2+\delta_2}$ with
  \begin{equation*}
    \| N(c,H)\|_{3/2-2\delta_1,3/2+\delta_2}\lesssim \| (c,H) \|_{\alpha,\beta}^2 + \| (c,H) \|_{\alpha,\beta}^3.
  \end{equation*}
  We finally apply \cref{lem:mainproofforcing} and get
  $\ell (N(c,H))\in E_{7/6-2\delta_1,7/6+\delta_2}$ with the desired
  estimate~\eqref{bd:nonlinearterms}.
\end{proof}

We can now end the proof of our main theorem.

\begin{proof}[Proof of \cref{thm:main}]
  Consider the map
  $$
  \begin{array}{l l l l l}
    \Theta : &  \mathbb R\times E_{7/6-\delta,7/6+\delta} & \longrightarrow &  \mathbb R\times E_{7/6-\delta,7/6+\delta} \\
             & (c,H) & \mapsto & \ell (\Phi)+\ell (N(c,H)).
  \end{array}
  $$
  Combining \cref{lem:mainproofforcing}, \cref{pr:mainproofnonlinear}
  and the estimate \eqref{id:mainmassinput} shows that for all
  $0<\delta<1/12$, there exist $\epsilon^*>0$ and $K>0$ such that, for
  all $0<\epsilon\leq \epsilon^*$, the map $\Theta$ is a contraction
  on the ball of $\mathbb R\times E_{7/6-\delta,7/6+\delta}$ of radius
  $K\epsilon$. By the Banach fixed-point theorem, there exists a fixed
  point $(c^*,H^*)$. We define accordingly
  \begin{equation} \label{id:fixedpointsolution}
    f=\mathfrak f+g, \qquad g=c^*G_0+H^*,
  \end{equation}
  so that
  \begin{equation} \label{id:prooftheorem1}
    f(\omega)=
    \begin{cases}
      \omega^{-7/6}+O(\epsilon \omega^{-7/6+\delta})
      & \mbox{for } \omega\leq 1,\\
      (1+c^*) \omega^{-7/6}\left(1+O(\epsilon \omega^{-\delta})\right)
      &\mbox{for }\omega> 1.
    \end{cases}
  \end{equation}

  We have by definition of $\Theta $ that $f$ solves the equation on
  $(0,\infty)$:
  \begin{equation*}
    \forall \omega>0, \qquad \mathcal C(f)(\omega)=\phi(\omega).
  \end{equation*}
  The estimate \eqref{id:prooftheorem1} is the desired estimate
  \eqref{th:bd:stability} of the theorem. The remaining identities
  \eqref{th:id:massbalance}, \eqref{th:id:massflux},
  \eqref{th:id:energyflux} and \eqref{th:id:fluxes} are all
  consequences of \cref{pr:frompointwisetofluxes}. This ends the proof
  of \cref{thm:main} in the case $j_M^0=j_M^*$ i.e.
  $j_M^\infty=j_M^*-\int \sqrt{\omega}\phi\, \dd\omega$, and the general
  case $j_M^\infty\neq j_M^*-\int \sqrt{\omega}\phi\, \dd\omega$ follows
  by scaling invariance.
\end{proof}

\section{The linearized problem around $\omega^{-7/6}$} \label{sec:linearised}

In this section we prove \cref{lem:mainproofforcing}. The kernel $k$
of the linearised operator in \eqref{eq:linearised-op} is an analytic
function on \((0,1) \cup (1,\infty)\) with bounds
\begin{itemize}
\item \(|k(\lambda)| \lesssim \lambda^{1/3}\) for \(\lambda \ll 1\)
\item \(|k(\lambda)| \lesssim |\lambda-1|^{-5/6}\) for \(\lambda \to
  1\)
\item \(|k(\lambda)| \lesssim 1\) for \(\lambda \to \infty\)
\end{itemize}
This follows directly from the definition of \(k\) as written in
\cite[Appendix~A]{escobedo-mischler-velazquez-2007-uehling-uhlenbeck}. Note
that \cite{escobedo-mischler-velazquez-2007-uehling-uhlenbeck} obtain
further decay, but these more direct estimates are sufficient for us.

In terms of the new variable \(x\), this translates into the fact that
\begin{equation*}
  |\mathcal K(x)| \lesssim e^{-x} \quad \text{ as } x \to -\infty
  \qquad \text{and} \qquad
  |\mathcal K(x)| \lesssim e^{-4x/3} \quad \text{ as } x \to \infty.
\end{equation*}
Hence for any \(b \in (1,3/2)\) we find that
\begin{equation}
  \label{eq:integral-bound-k}
  \int_{x=-\infty}^{\infty} |\mathcal K(x)|\, e^{bx}\, \dd x < \infty.
\end{equation}

For the Fourier transform with the convention
\begin{equation*}
  \widehat{\mathcal K}(\xi) = \int_{-\infty}^\infty e^{-i x \xi} \mathcal K(x) \,\dd x
\end{equation*}
we therefore find that it is bounded and analytic in the strip
\begin{equation*}
  S_{\delta_1,\delta_2}
  = \{ z \in \mathbb C : \Im z \in [7/6-\delta_1,7/6+\delta_2] \}
\end{equation*}
for \(0<\delta_1,\delta_2<1/6\). Moreover, by a variant of the
Riemann-Lebesgue Lemma, see e.g.\ \cite[Theorem~2.8,
Chapter~2]{gripenberg-londen-staffans-1990-volterra},
\(\widehat{\mathcal K}(\xi) \to 0\) as \(|\Re \xi| \to \infty\) over
\(S_{\delta_1,\delta_2}\) .

To solve the linear problem
\begin{equation}
  \label{eq:linear-problem}
  \mathcal L G = \Psi
\end{equation}
we find formally in Fourier that
\begin{equation*}
  (a - \widehat{\mathcal{K}})(\xi)\, G(\xi) = \Psi(\xi)
\end{equation*}
yielding the spectral condition that
\(a - \widehat{\mathcal{K}} \not = 0\). Due to the analytic structure of
\(a-\widehat{\mathcal{K}}\), it is straightforward to numerically verify
the spectral condition by the argument principle on the strip
\(S_{\delta_1,\delta_2}\) and as shown in
\cite{escobedo-mischler-velazquez-2007-uehling-uhlenbeck} the only
zero in \(S_{\delta_1,\delta_2}\) for \(0<\delta_1,\delta_2<1/6\) is
at \(\xi=7i/6\), is a simple root, and corresponds to the stationary solution
\(\mathfrak f\).

For \(b \in (1,3/2)\) with \((a-\widehat K)(x+ib) \not = 0\) for all
\(x \in \R\), the bound \eqref{eq:integral-bound-k} implies that the
Paley-Wiener theorem
\cite[Chapter~2]{gripenberg-londen-staffans-1990-volterra} (as used
often for Volterra equations) yields a resolvent \(R_b\) with
\begin{equation*}
  \int_{-\infty}^{\infty} |R_{b}(x)|\, e^{bx}\, \dd x < \infty
\end{equation*}
such that, if $1 \leq p \leq \infty$ and \(\Psi \in L^p(\R,e^{bx}\, \dd x)\), then the problem
\eqref{eq:linear-problem} has a solution \(G \in L^p(\R,e^{bx}\, \dd
x)\) given by
\begin{equation*}
  G(x) = \frac{1}{a} (\Psi(x) + R_{b} * \Psi(x)).
\end{equation*}

Given \(0<\delta_1,\delta_2<1/6\), we can therefore find resolvents
\(R_-\) and \(R_+\) for the linear problem \eqref{eq:linear-problem}
with the bounds
\begin{equation*}
  \int_{-\infty}^{\infty} |R_{-}(x)|\, e^{(7/6-\delta_1)x}\, \dd x < \infty
  \quad\text{and}\quad
  \int_{-\infty}^{\infty} |R_{+}(x)|\, e^{(7/6+\delta_2)x}\, \dd x < \infty.
\end{equation*}
By the integrability and as \(R_{-}\) solves the linear problem, it
can be expressed through Fourier as
\begin{equation*}
  R_-(x) = \frac{1}{2\pi}
  \int_{-\infty}^{\infty} e^{ix(z+ib_{-})} \frac{\widehat
    K}{a-\widehat{K}}(z+ib_{-}) \,\dd z\qquad
  \text{for }
  b_{-} = 7/6-\delta_1.
\end{equation*}
Likewise for \(R_{+}\) we find
\begin{equation*}
  R_+(x) = \frac{1}{2\pi}
  \int_{-\infty}^{\infty} e^{ix(z+ib_{+})} \frac{\widehat
    K}{a-\widehat{K}}(z+ib_{+}) \,\dd z\qquad
  \text{for }
  b_{+} = 7/6+\delta_2.
\end{equation*}

As \(\widehat{K}\) is analytic and bounded in the strip
\(S_{\delta_1,\delta_2}\) with decay as \(|\Re z| \to \infty\) and
\(a-\widehat{K}\) has only a simple root in the strip, we find,
similar to \cite[Theorem~2.1,
Chapter~7]{gripenberg-londen-staffans-1990-volterra}, for a constant
\(c_1\) (from the residue) that
\begin{equation*}
  R_-(x) = R_+(x) + ac_1 e^{-7x/6}.
\end{equation*}
Hence we can write the solution to the linear problem \eqref{eq:linear-problem} as
\begin{equation*}
  \begin{split}
  G(x) &= \frac{\Psi}{a}(x) + \frac 1a \int_{y<0} R_-(y) \Psi(x-y)\, \dd y
  + \frac 1a \int_{y>0} R_-(y) \Psi(x-y)\, \dd y \\
  &= \frac{\Psi}{a}(x) + \frac 1a \int_{y<0} R_-(y) \Psi(x-y)\, \dd y
  + \frac 1a \int_{y>0} R_+(y) \Psi(x-y)\, \dd y
  + \int_{y>0} c_1 e^{-7y/6} \Psi(x-y)\, \dd y.
  \end{split}
\end{equation*}

The first three terms in the final expression satisfy the desired decay for
the remainder, i.e. each define a bounded linear operator from
\(E_{7/6-\delta_1,7/6+\delta_2}\) onto itself. The leading order comes
from the last integral which we express as
\begin{equation*}
  \begin{split}
    \int_{y>0} c_1 e^{-7y/6} \Psi(x-y)\, \dd y
    &= c_1 \chi(x) e^{-7x/6} \int_{y\in\mathbb R} \Psi(x-y) e^{7(x-y)/6}\,
    \dd y \\
    &\quad- c_1 \chi(x) e^{-7x/6} \int_{y<0} \Psi(x-y) e^{7(x-y)/6}\,
    \dd y \\
    &\quad+ (1-\chi(x)) c_1 e^{-7x/6} \int_{y>0} \Psi(x-y)
    e^{7(x-y)/6}\, \dd y.
  \end{split}
\end{equation*}
The first term is the sought leading order while the last two terms
can be included in \(\ell_1\) of \cref{lem:mainproofforcing}.

This shows the claimed splitting of \cref{lem:mainproofforcing}.

\begin{rem}[Value of \(c_1\)]
  We are going to use \cref{thm:main} and its proof, which are
  actually valid without knowing the exact value of $c_1$, to
  determine it. Indeed, pick any
  $\bar \Phi \in \mathcal C^\infty_0((0,\infty))$ with
  $M(\bar \Phi)=1$ and for $\epsilon>0$ let
  $\Phi_\epsilon=\epsilon \bar \Phi$ and
  \begin{equation} \label{bd:computationc12}
    j_{M,\epsilon}^\infty=1-\epsilon.
  \end{equation}
  For $\epsilon $ small, applying \cref{thm:main}, using the
  fixed-point relation \eqref{id:fixedpointsolution}, we get a
  solution of the form $f=\mathfrak f+c_\epsilon^*G_0+H_\epsilon^*$
  with
  \begin{equation} \label{bd:computationc11}
    |c_\epsilon^*|+\| H^*_\epsilon\|_{7/6-\delta^*,7/6+\delta^*}\lesssim \epsilon
  \end{equation}
  for some $\delta^*>0$ independent of $\epsilon$, and with $j_M^0=1$
  by mass balance \eqref{th:id:massbalance}. Still by the fixed-point
  relation \eqref{id:fixedpointsolution}, using \eqref{id:defell}, we
  obtain
  $c_\epsilon^*=\ell_0(\Phi_\epsilon+N(c_\epsilon^*,H_\epsilon^*))$. Using
  \cref{lem:mainproofforcing}, \cref{pr:mainproofnonlinear} and
  \eqref{bd:computationc11} this gives
  $c_\epsilon^*=c_1 \epsilon+O(\epsilon^2).$ Injecting this identity
  in the flux identity \eqref{id:weakformulationlimitfluxes} shows
  \begin{equation} \label{bd:computationc13}
    j^\infty_{M,\epsilon}=j_M^*(1+c_\epsilon^*)^3=j_M^*+3j_M^*c_1 \epsilon+O(\epsilon^2).
  \end{equation}
  The identities \eqref{bd:computationc12} and
  \eqref{bd:computationc13} impose that $c_1=-\frac{1}{3j_M^*}$ as
  desired.
\end{rem}

\section{Nonlinear estimates}
\label{sec:nonlinear-estimates}

\subsection{Estimates on the trilinear operator $\bar{\mathcal{C}}$}

Recall the trilinear operator \(\bar{\mathcal{C}}\) from
\eqref{eq:collision-3-4-trilinear}.  In the definition, we estimate
the absolute value of the integral as
\begin{equation}
  \label{eq:collision-3-4-trilinear-abs}
  |\bar{\mathcal{C}}(f,g,h)|
  \le
  \tilde{\mathcal{C}}(f,g,h) = 2 \iint_{0<\omega_3<\omega_4,0<\omega_2}
  W [(f_1+f_2) g_3 h_4 + (g_3+g_4) h_1 f_2]\, \dd \omega_3\, \dd \omega_4.
\end{equation}
The different estimates are then deduced from the following lemma.
\begin{lem}
  \label{thm:collision-trilinear-estimate}

  Suppose that \(f \in E_{\alpha_f,\beta_f}\),
  \(g \in E_{\alpha_g,\beta_g}\) and \(h \in E_{\alpha_h,\beta_h}\)
  for \(0\leq \alpha_f,\alpha_g,\alpha_h < 5/4\) with
  $\alpha_f,\alpha_g,\alpha_h \neq 1$ and
  \(\beta_f,\beta_g,\beta_h > 1\) with $\beta_f+\beta_g\neq 5/2$. Then
  for the absolute value of the collision integral
  \(\tilde{\mathcal{C}}\) we have
  \begin{equation*}
    \| \tilde{\mathcal{C}}(f,g,h) \|_{\alpha',\beta'}
    \lesssim
    \| f \|_{\alpha_f,\beta_f}
    \| g \|_{\alpha_g,\beta_g}
    \| h \|_{\alpha_h,\beta_g}
  \end{equation*}
  for
  \begin{equation*}
    \alpha' =
    \max\left(\alpha_f,\alpha_h,\alpha_f+\alpha_g+\alpha_h-2,\alpha_f+\alpha_g-1,\alpha_h+\alpha_g-1\right)
  \end{equation*}
  and
  \begin{equation*}
    \beta' =
    \min\left(\beta_f+\beta_g+\beta_h-2,\beta_h+\frac 12\right).
  \end{equation*}
\end{lem}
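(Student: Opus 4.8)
The plan is to reduce the estimate to a purely computational statement and then organise the computation by a systematic region decomposition. First I note that $\tilde{\mathcal{C}}$ is built entirely from nonnegative quantities, hence monotone in each of its three slots, and that $|f|\le \|f\|_{\alpha_f,\beta_f}\,\rho^{\alpha_f,\beta_f}$ pointwise (and likewise for $g,h$). Writing $\rho^f:=\rho^{\alpha_f,\beta_f}$, $\rho^g:=\rho^{\alpha_g,\beta_g}$, $\rho^h:=\rho^{\alpha_h,\beta_h}$ and $\rho^f_i:=\rho^f(\omega_i)$ etc., this gives $\tilde{\mathcal{C}}(f,g,h)\le \|f\|_{\alpha_f,\beta_f}\|g\|_{\alpha_g,\beta_g}\|h\|_{\alpha_h,\beta_h}\,\tilde{\mathcal{C}}(\rho^f,\rho^g,\rho^h)$, so it suffices to prove the pointwise bound $\tilde{\mathcal{C}}(\rho^f,\rho^g,\rho^h)(\omega_1)\lesssim \rho^{\alpha',\beta'}(\omega_1)$ for every $\omega_1>0$. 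This removes the functions from the problem and leaves a weighted integral to estimate.

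Next I would fix $\omega_1$ and split the $(\omega_3,\omega_4)$-integration by which of $\omega_1,\omega_2,\omega_3$ is the minimum of $(\omega_1,\omega_2,\omega_3,\omega_4)$ — note $\omega_4$ is never the minimum since $\omega_3<\omega_4$ — which is exactly the decomposition $\tilde{\mathcal{C}}=\tilde{\mathcal{C}}^1+\tilde{\mathcal{C}}^2+\tilde{\mathcal{C}}^3$. In each region $W$ collapses to a monomial, and the identity $\omega_1+\omega_2=\omega_3+\omega_4$ forces a complete ordering of the four frequencies: on $\tilde{\mathcal{C}}^1$ one has $W=1$ and $\omega_1\le\omega_3<\omega_4\le\omega_2$; on $\tilde{\mathcal{C}}^2$ one has $W=\sqrt{\omega_2/\omega_1}$ and $\omega_2\le\omega_3<\omega_4\le\omega_1$; on $\tilde{\mathcal{C}}^3$ one has $W=\sqrt{\omega_3/\omega_1}$ and $\omega_3\le\min(\omega_1,\omega_2)\le\max(\omega_1,\omega_2)\le\omega_4$. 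In all three regions $\omega_4\simeq\max(\omega_1,\omega_2,\omega_3,\omega_4)$.

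Within each region I would decompose further according to whether $\omega_1\lessgtr 1$ and, using the ordering, whether each of $\omega_2,\omega_3,\omega_4$ is $\lessgtr 1$; on every resulting piece the weights $\rho^f_i,\rho^g_i,\rho^h_i$ are pure powers. After the linear change of variables $\omega_2=\omega_3+\omega_4-\omega_1$ (or integrating directly in the ordered variables), each piece becomes an elementary integral of a monomial over a simplex, and reading off the resulting power of $\omega_1$ produces, as $\omega_1\to 0$, one of the exponents in the maximum defining $\alpha'$, and as $\omega_1\to\infty$ one of the two in the minimum defining $\beta'$. For instance, factoring $\rho^f_1$ out of the first term of the integrand leaves a bounded integral and yields the bare exponents $(\alpha_f,\beta_f)$; the configuration in $\tilde{\mathcal{C}}^1$ where all four frequencies are comparably small (there $W=1$ and $\dd\omega_3\,\dd\omega_4\simeq\omega_1^2$) yields $\alpha_f+\alpha_g+\alpha_h-2$; the $\omega_2$-minimal part of the terms containing $\rho^f_2$ yields the decay $\beta_h+\tfrac12$ at infinity thanks to the $\sqrt{\omega_2/\omega_1}$ gain; and the remaining candidates $\alpha_f+\alpha_g-1$, $\alpha_h+\alpha_g-1$ arise similarly from configurations in which exactly two of the three weights are simultaneously singular. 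Taking the maximum over all pieces for the behaviour near $0$ and the minimum for the behaviour near $\infty$ recovers $\alpha'$ and $\beta'$; the hypotheses $\alpha_\bullet<5/4$, $\alpha_\bullet\neq 1$, $\beta_\bullet>1$ and $\beta_f+\beta_g\neq 5/2$ are precisely what guarantees that each of these model integrals is absolutely convergent rather than logarithmically divergent (the excluded values being exactly the borderline powers).

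The main obstacle I anticipate is not any individual estimate but the case bookkeeping: three regions, two terms, and several size configurations each, so a dozen-plus model integrals must be computed and each checked against the claimed $\alpha',\beta'$. Two families are genuinely delicate. The first is the ``all frequencies small'' configuration, where the measure factor $\dd\omega_3\,\dd\omega_4\simeq\omega_1^2$ must be balanced against three singular weights to produce $\alpha_f+\alpha_g+\alpha_h-2$ and nothing worse; this is where $\alpha_\bullet<5/4$ (equivalently, $\sqrt{\omega}\,\rho^\bullet$ integrable near $0$, together with the $W$-gain) is used sharply. The second is the near-resonant set $\omega_2=\omega_3+\omega_4-\omega_1\to 0$ in the terms containing $\rho^f_2$: there $\rho^f_2$ blows up, and the only thing preventing divergence is the factor $W=\sqrt{\omega_2/\omega_1}$, which is available precisely because $\omega_2$ is then the minimum — this is the structural cancellation that makes $\tilde{\mathcal{C}}$ better behaved than a naive power count suggests. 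Verifying that this vanishing is exactly enough, and organising the regions so each produces an admissible exponent, is the crux of the argument.
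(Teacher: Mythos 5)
Your proposal follows essentially the same route as the paper's proof: reduce by monotonicity to the pure weights \(\rho^{\alpha,\beta}\), split \(\tilde{\mathcal{C}}=\tilde{\mathcal{C}}^1+\tilde{\mathcal{C}}^2+\tilde{\mathcal{C}}^3\) according to which of \(\omega_1,\omega_2,\omega_3\) is minimal (where \(W\) is bounded by \(1\), \(\sqrt{\omega_2/\omega_1}\), \(\sqrt{\omega_3/\omega_1}\) respectively and \(\omega_4\simeq\max_i\omega_i\)), then split at \(\omega=1\) and power-count elementary monomial integrals, with the excluded exponent values exactly ruling out logarithmic borderline cases. The only loose point is the illustrative aside that factoring out \(\rho^{\alpha_f,\beta_f}(\omega_1)\) ``yields the bare exponent \(\beta_f\)'' at infinity: no piece decays merely like \(\omega_1^{-\beta_f}\) (the companion integral there is over frequencies \(\gtrsim\omega_1\) and itself decays), which is consistent with \(\beta_f\) not appearing alone in \(\beta'\); carried out as planned, your computation reproduces the paper's estimates.
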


\begin{rem}
  The restrictions $\alpha_f,\alpha_g,\alpha_h \neq 1$ and
  $\beta_f+\beta_g\neq 5/2$ prevent logarithmic losses.
\end{rem}

\begin{proof}
It suffices to show that
  \(\| \tilde{\mathcal{C}}(f,g,h) \|_{\alpha',\beta'} \lesssim 1\)
  when \(f \le \rho_{\alpha_f,\beta_f}\), \(g \le \rho_{\alpha_g,\beta_g}\) and \(h \le \rho_{\alpha_h,\beta_h}\). We
  then estimate the different parts of the integral depending on
  whether \(\omega_1\), \(\omega_2\) or \(\omega_3\) is minimal.

  \medskip \noindent \underline{$\omega_1$ minimal:}
  In this case we find
  \begin{equation*}
    |\tilde{\mathcal{C}}^1|
    \lesssim
    \int_{\omega_3=\omega_1}^\infty \int_{\omega_4=\omega_3}^\infty
    \rho_{f,1} \rho_{g,3} \rho_{h,4}
    +
    \int_{\omega_3=\omega_1}^\infty \int_{\omega_2=2\omega_3-\omega_1}^\infty
    \rho_{h,1} \rho_{g,3} \rho_{f,2}.
  \end{equation*}
  For \(\omega_1 \ge 1\) we thus find
  \begin{equation*}
    |\tilde{\mathcal{C}}^1|
    \lesssim \omega_1^{2-\beta_f-\beta_g-\beta_h}
    \le \omega_1^{-\beta'}.
  \end{equation*}
  For \(\omega_1 \le 1\) we find
  \begin{align*}
    |\tilde{\mathcal{C}}^1|
    &\lesssim
      \omega_1^{-\alpha_f}
      \left(1 + \int_{\omega_3=\omega_1}^1
      \left(1 + \int_{\omega_4=\omega_3}^1 \rho_{h,4}\, \dd \omega_4
      \right)
      \rho_{g,3}\, \dd \omega_3 \right)\\
    &\qquad
      +
      \omega_1^{-\alpha_h}
      \left(1 + \int_{\omega_3=\omega_1}^1
      \left(1 + \int_{\omega_2=\omega_3}^1 \rho_{f,2}\, \dd \omega_4
      \right)
      \rho_{g,3}\, \dd \omega_3 \right)\\
    &\lesssim \omega_1^{-\alpha_f} +\omega_1^{-\alpha_h}+ \omega_1^{1-\alpha_f-\alpha_g}
      + \omega_1^{1-\alpha_g-\alpha_h}
      + \omega_1^{2-\alpha_f-\alpha_g-\alpha_h}
      \lesssim \omega_1^{-\alpha'}.
  \end{align*}

  \medskip \noindent \underline{$\omega_2$ minimal:}
  Here we find
  \begin{equation*}
    |\tilde{\mathcal{C}}^2|
    \lesssim
    \int_{\omega_2=0}^{\omega_1}
    \int_{\omega_3=\omega_2}^{(\omega_1+\omega_2)/2}
    \sqrt{\frac{\omega_2}{\omega_1}}\,
    [ \rho_{f,2} \rho_{g,3} \rho_{h,4}
    + \rho_{g,3} \rho_{h,1} \rho_{f,2} ].
  \end{equation*}
  In this case \(\omega_4 \ge \omega_1/2\) so that
  \begin{equation*}
    |\tilde{\mathcal{C}}^2|
    \lesssim
    \rho_{h,1}
    \int_{\omega_2=0}^{\omega_1}
    \int_{\omega_3=\omega_2}^{(\omega_1+\omega_2)/2}
    \sqrt{\frac{\omega_2}{\omega_1}}\,
    \rho_{g,3} \rho_{f,2}.
  \end{equation*}
  For \(\omega_1\le 1\) this shows
  \begin{equation*}
    |\tilde{\mathcal{C}}^2|
    \lesssim
    \omega_1^{2-\alpha_f-\alpha_g-\alpha_h}
    \le
    \omega_1^{-\alpha'}.
  \end{equation*}
  For \(\omega_1 \ge 1\) we find
  \begin{equation*}
    |\tilde{\mathcal{C}}^2|
    \lesssim
    \omega_1^{-\beta_h-\frac 12}
    \int_{\omega_2=0}^{\omega_1}
    \sqrt{\omega_2} \rho_{f,2}
    (\indic_{\omega_2 \le 1}(1+\omega_2^{1-\alpha_g}) + \indic_{\omega_2 \ge 1}
    \omega_2^{1-\beta_g})\,
    \dd \omega_2
    \lesssim \omega_1^{-\beta_h - \frac 12}
    + \omega_1^{2-\beta_f-\beta_g-\beta_h}
    \lesssim \omega_1^{-\beta'}.
  \end{equation*}

  \medskip \noindent \underline{$\omega_3$ minimal:}
  Here we find
  \begin{equation*}
    |\tilde{\mathcal{C}}^3|
    \lesssim
    \iint_{0<\omega_3<\omega_1,\omega_2,\omega_4}
    \sqrt{\frac{\omega_3}{\omega_1}}\,
    [ (\rho_{f,1}+\rho_{f,2}) \rho_{g,3} \rho_{h,4}
    + \rho_{g,3} \rho_{h,1} \rho_{f,2} ].
  \end{equation*}
  In this case \(\omega_4 \ge \omega_1\) so that
  \begin{equation*}
    |\tilde{\mathcal{C}}^3|
    \lesssim
    \int_{\omega_3=0}^{\omega_1} \int_{\omega_4=\omega_1}^\infty
    \sqrt{\frac{\omega_3}{\omega_1}}\,
    \rho_{f,1} \rho_{g,3} \rho_{h,4}
    +
    \int_{\omega_3=0}^{\omega_1} \int_{\omega_2=\omega_3}^\infty
    \sqrt{\frac{\omega_3}{\omega_1}}
    \rho_{h,1} \rho_{g,3} \rho_{f,2}.
  \end{equation*}
  Hence we find for \(\omega_1 \le 1\) that
  \begin{equation*}
    |\tilde{\mathcal{C}}^3|
    \lesssim
      \omega_1^{-\alpha_f-\frac 12}
      (1 + \omega_1^{1-\alpha_h})
      \int_{\omega_3=0}^{\omega_1} \omega_3^{\frac 12 - \alpha_g}\,
      \dd \omega_3
      +
      \omega_1^{-\alpha_h-\frac 12}
      \int_{\omega_3=0}^{\omega_1} \omega_3^{\frac 12 -\alpha_g}
      (1+\omega_3^{1-\alpha_f})
      \, \dd
      \omega_3
    \lesssim \omega_1^{-\alpha'}.
  \end{equation*}
  For \(\omega_1 \ge 1\) we find that
  \begin{equation*}
    |\tilde{\mathcal{C}}^3|
    \lesssim
      \omega_1^{-\beta_f - \frac 12}
      \omega_1^{1-\beta_h}
      (1 + \omega_1^{\frac 32 - \beta_g})
      + \omega_1^{-\beta_h-\frac 12}
      (1 + \omega_1^{\frac 52 -\beta_f - \beta_g})
    \lesssim \omega_1^{-\beta'}.
  \end{equation*}

  Hence we have found the required bound for all parts of the integral.
\end{proof}

\subsection{Nonlinear estimates on the collision operator}

We can deduce the immediate corollary for the collision operator.

\begin{cor} \label{propC}
Let \(0\le \alpha<5/4\) and \(\beta>1\) with $\alpha\neq 1$ and
  $\beta\neq 5/4$. Then the collision operator $\mathcal{C}$ is
  bounded from $E_{\alpha,\beta}$ to $E_{\alpha',\beta'}$, where
  \begin{equation*}
    \alpha' = \alpha + \max(0,2\alpha-2)
    \quad\text{and}\quad
    \beta' = \beta + \min(2\beta -2 ,\frac{1}{2}).
  \end{equation*}
  For \(\alpha<1\) there is no loss at small \(\omega\) as in this
  case \(\alpha' = \alpha\).
\end{cor}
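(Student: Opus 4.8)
The plan is to deduce everything from the trilinear estimate of \cref{thm:collision-trilinear-estimate}. Recall that $\mathcal{C}(f) = \mathcal{C}(f,f,f)$, and that, by \eqref{id:polarized}, $\mathcal{C}(f,f,f) = \bar{\mathcal{C}}(f,f,f)$ since all six summands coincide; combining this with the pointwise bound \eqref{eq:collision-3-4-trilinear-abs} gives $|\mathcal{C}(f)(\omega_1)| \le \tilde{\mathcal{C}}(f,f,f)(\omega_1)$ for every $\omega_1 > 0$. Hence it suffices to estimate $\tilde{\mathcal{C}}(f,f,f)$, which is exactly what \cref{thm:collision-trilinear-estimate} provides.

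First I would check the hypotheses of \cref{thm:collision-trilinear-estimate} with $\alpha_f = \alpha_g = \alpha_h = \alpha$ and $\beta_f = \beta_g = \beta_h = \beta$: the requirements $0 \le \alpha < 5/4$, $\beta > 1$ and $\alpha \neq 1$ are among the assumptions, while $\beta_f + \beta_g \neq 5/2$ becomes precisely $\beta \neq 5/4$. The lemma then yields $\| \mathcal{C}(f) \|_{\alpha',\beta'} \lesssim \| f \|_{\alpha,\beta}^3$ with
\begin{equation*}
  \alpha' = \max\bigl(\alpha,\ 3\alpha-2,\ 2\alpha-1\bigr), \qquad \beta' = \min\bigl(3\beta-2,\ \beta + \tfrac 12\bigr).
\end{equation*}

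The remaining step is pure arithmetic. For $\beta'$ one simply factors out $\beta$ to get $\beta' = \beta + \min(2\beta-2, 1/2)$, as claimed. For $\alpha'$ one notes that $\max(\alpha, 3\alpha-2) = \alpha + \max(0, 2\alpha-2)$, so it only remains to see that the term $2\alpha - 1$ is never dominant: when $\alpha \le 1$ one has both $2\alpha - 1 \le \alpha$ and $3\alpha - 2 \le \alpha$, so the maximum is $\alpha$; when $\alpha \ge 1$ one has $2\alpha - 1 \le 3\alpha - 2$, so the maximum is $3\alpha - 2$. In either case $\alpha' = \alpha + \max(0, 2\alpha-2)$. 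The final assertion of the statement follows at once: if $\alpha < 1$ then $2\alpha - 2 < 0$, hence $\alpha' = \alpha$, i.e.\ no loss at small $\omega$.

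There is essentially no obstacle here, since all the analytic work sits inside \cref{thm:collision-trilinear-estimate}. The only point requiring a moment's care is verifying that the $2\alpha-1$ contribution to $\alpha'$ (which originates from the ``$\omega_i$ minimal'' regime with two small factors) is always absorbed by $\max(\alpha, 3\alpha-2)$, so that the clean formula $\alpha' = \alpha + \max(0, 2\alpha-2)$ holds; this is the short case distinction carried out above.
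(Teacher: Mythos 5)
Your proposal is correct and is exactly the paper's route: the corollary is stated there as an immediate consequence of \cref{thm:collision-trilinear-estimate} applied with $\alpha_f=\alpha_g=\alpha_h=\alpha$, $\beta_f=\beta_g=\beta_h=\beta$ (so that $\beta_f+\beta_g\neq 5/2$ is precisely $\beta\neq 5/4$), followed by the same arithmetic simplification of the exponents. Your explicit check that the $2\alpha-1$ term is always absorbed by $\max(\alpha,3\alpha-2)$ is the only detail the paper leaves unsaid, and it is done correctly.
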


On the remaining quadratic and cubic terms of the perturbation around
\(\mathfrak{f}\) we find the following direct corollary.
\begin{cor} \label{propQ}

  Let \(0\le \alpha<5/4\) and \(\beta>1\) with $\alpha\neq 1$ and
  $\beta \notin \{5/4,4/3\}$. Then one has the collection of estimates
  for $H\in E_{\alpha,\beta}$:
  \begin{align}
    \label{bd:nonlinear:QHH}
    \|\mathcal{Q}(H,H)\|_{\alpha_1',\beta_1'}
    &\lesssim \| H \|_{\alpha,\beta}^2,
    & \alpha_1' &= \max\left(2\alpha-\frac 56,\frac 76\right),
    & \beta_1' &= \min\left(\frac 53,\beta+\frac 12,2\beta-\frac{5}{6}\right),\\
    \label{bd:nonlinear:QG0H}
    \|\mathcal{Q}(G_0,H)\|_{\alpha_2',\beta_2'}
    &\lesssim \| H \|_{\alpha,\beta},
    & \alpha_2' &= \max\left(\alpha+\frac 1 6,\frac 76\right),
    & \beta_2' &= \min\left(\frac 53,\beta+\frac 13\right),\\
    \label{bd:nonlinear:CG0G0H}
    \|\mathcal{C}(G_0,G_0,H)\|_{\alpha_3',\beta_3'}
    &\lesssim \| H \|_{\alpha,\beta},
    & \alpha_3'&=\alpha,
    & \beta_3'&=\min\left(\frac 53,\beta+\frac 13\right),\\
    \label{bd:nonlinear:CG0HH}
    \|\mathcal{C}(G_0,H,H)\|_{\alpha_4',\beta_4'}
    &\lesssim \| H \|_{\alpha,\beta}^2,
    & \alpha_4'&=\max\left(\alpha,2\alpha-1\right),
    & \beta_4'&=\min\left(\frac 53,\beta+\frac 12,2\beta-\frac 56\right).
  \end{align}
\end{cor}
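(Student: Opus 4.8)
The plan is to deduce \cref{propQ} from \cref{thm:collision-trilinear-estimate}, exactly as \cref{propC} is its diagonal case $f=g=h$. The first step is to rewrite the quadratic and cubic perturbation terms through the fully symmetric trilinear operator $\mathcal{C}$ and the two fixed building blocks $\mathfrak f(\omega)=\omega^{-7/6}$ and $G_0$. Since $\mathcal{C}(f)=\mathcal{C}(f,f,f)$ is multilinear and symmetric, expanding around $\mathfrak f$ gives $\mathcal{Q}(g)=3\,\mathcal{C}(\mathfrak f,g,g)$, hence the polarizations $\mathcal{Q}(H,H)=3\,\mathcal{C}(\mathfrak f,H,H)$ and $\mathcal{Q}(G_0,H)=3\,\mathcal{C}(\mathfrak f,G_0,H)$, while the last two quantities are already trilinear. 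Thus all four expressions are of the form $\mathcal{C}(u_1,u_2,u_3)$ with each $u_i$ equal to $\mathfrak f$, $G_0$, or $H$.

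Next I record the relevant memberships. One has $\mathfrak f\in E_{7/6,7/6}$, and since $7/6<5/4$ and $7/6\neq 1$, the slot filled by $\mathfrak f$ is admissible for \cref{thm:collision-trilinear-estimate}. The function $G_0(\omega)=\chi(\log\omega)\,\omega^{-7/6}$ is supported in $\omega\gtrsim 1$, so $\sup_{0<\omega<1}\omega^{s}|G_0(\omega)|=0$ for every $s\in\R$ while $\sup_{\omega>1}\omega^{7/6}|G_0(\omega)|\le 1$; hence $\|G_0\|_{s,7/6}\lesssim 1$ for all $s$, and in each application I am free to choose the small-$\omega$ exponent of $G_0$ — in practice $s=0$ — so that $G_0$ contributes no loss at the origin.

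The core step is the symmetrized trilinear bound: by \eqref{id:polarized} and $|\bar{\mathcal C}|\le\tilde{\mathcal C}$ from \eqref{eq:collision-3-4-trilinear-abs}, $|\mathcal{C}(u_1,u_2,u_3)|$ is bounded by the average of $\tilde{\mathcal C}$ over the six permutations of $(u_1,u_2,u_3)$, so applying \cref{thm:collision-trilinear-estimate} to each permutation yields
\begin{equation*}
  \|\mathcal{C}(u_1,u_2,u_3)\|_{\alpha',\beta'}
  \lesssim \|u_1\|_{\alpha_1,\beta_1}\|u_2\|_{\alpha_2,\beta_2}\|u_3\|_{\alpha_3,\beta_3},
\end{equation*}
with
\begin{equation*}
  \alpha'=\max\bigl(\alpha_1,\alpha_2,\alpha_3,\ \alpha_1+\alpha_2+\alpha_3-2,\ \alpha_1+\alpha_2-1,\ \alpha_1+\alpha_3-1,\ \alpha_2+\alpha_3-1\bigr),
  \qquad
  \beta'=\min\bigl(\beta_1+\beta_2+\beta_3-2,\ \min(\beta_1,\beta_2,\beta_3)+\tfrac12\bigr),
\end{equation*}
valid whenever each $\alpha_i\in[0,5/4)\setminus\{1\}$, each $\beta_i>1$, and $\beta_i+\beta_j\neq 5/2$ for $i\neq j$. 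Feeding in $(\alpha_i,\beta_i)=(7/6,7/6)$ in the $\mathfrak f$-slot, $(0,7/6)$ in each $G_0$-slot, and $(\alpha,\beta)$ in each $H$-slot, the admissibility conditions reduce to $\alpha\in[0,5/4)\setminus\{1\}$ together with $2\beta\neq5/2$ and $7/6+\beta\neq5/2$ — the latter two arising from the pairs $(\beta,\beta)$ and $(7/6,\beta)$ that occur across the four terms — i.e. to $\beta\notin\{5/4,4/3\}$, which is precisely the hypothesis of the corollary.

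Finally one simplifies the $\max$/$\min$ in each of the four cases. For example, for $\mathcal{Q}(H,H)=3\,\mathcal{C}(\mathfrak f,H,H)$ the display above gives $\alpha'=\max(7/6,\alpha,2\alpha-5/6,\alpha+1/6,2\alpha-1)=\max(7/6,2\alpha-5/6)$ and $\beta'=\min(2\beta-5/6,\min(7/6,\beta)+\tfrac12)=\min(5/3,\beta+1/2,2\beta-5/6)$, which are $(\alpha_1',\beta_1')$; the reductions for $\mathcal{Q}(G_0,H)$, $\mathcal{C}(G_0,G_0,H)$, and $\mathcal{C}(G_0,H,H)$ are entirely analogous, using $s=0$ in the $G_0$-slots so that they drop out of $\alpha'$ (giving $\alpha_3'=\alpha$, $\alpha_4'=\max(\alpha,2\alpha-1)$) and contribute only $\beta=7/6$ to $\beta'$ (giving the $\min$ with $5/3=7/6+1/2$ and with $\beta+\tfrac13$ or $2\beta-\tfrac56$). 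I do not expect a genuine obstacle: the argument is bookkeeping on top of \cref{thm:collision-trilinear-estimate}, the only points requiring care being the symmetrization (needed because \cref{thm:collision-trilinear-estimate} is asymmetric in its three arguments), the optimal placement of $G_0$ at small frequencies, and checking that the excluded threshold values $\alpha=1$ and $\beta\in\{5/4,4/3\}$ are exactly those at which some permutation would hit a logarithmically divergent case.
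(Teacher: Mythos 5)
Your proposal is correct and follows the same route as the paper: use the polarization identity \eqref{id:polarized} to reduce each term to the trilinear operator, bound it by $\tilde{\mathcal C}$, and apply \cref{thm:collision-trilinear-estimate} to all permutations of the arguments with $\mathfrak f\in E_{7/6,7/6}$ and $G_0\in E_{0,7/6}$, then simplify the resulting $\max/\min$ of exponents. Your bookkeeping of the exponents and of the excluded values $\alpha=1$, $\beta\in\{5/4,4/3\}$ matches the paper's (terser) proof.
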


\begin{proof}
  Recall the identity \eqref{id:polarized}. The first inequality
  \eqref{bd:nonlinear:QHH} is obtained by applying Lemma
  \ref{thm:collision-trilinear-estimate} to
  $(f,g,h)\in \{ (\mathfrak f,H,H),(H,\mathfrak f,H),(H,H,\mathfrak
  f)\}$, and with $(\alpha_f,\alpha_h,\beta_h)$ and
  $(\beta_f,\beta_g,\beta_h)$ the corresponding permutations of
  $(7/6,\alpha,\alpha)$ and $(7/6,\beta,\beta)$. The remaining
  estimates are obtained similarly, using in addition that
  $G_0\in E_{0,7/6}$.
\end{proof}

\subsection{Nonlinear estimates involving $g_0$}

\begin{prop}\label{pr:G0estimates}
  The function $\mathcal{C}(g_0)$ belongs to $E_{0,5/3}$ and the
  function $\mathcal{C}(\mathfrak{f},g_0,g_0)$ belongs to
  $E_{7/6,5/3}$.
\end{prop}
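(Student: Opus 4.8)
The plan is to use the change of variables $\omega=e^x$, under which $g_0(\omega)=\chi(\log\omega)\,\omega^{-7/6}$, so that $g_0\in E_{0,7/6}$, and to set
$r:=\mathfrak f-g_0=(1-\chi(\log\omega))\,\omega^{-7/6}$, which coincides with $\omega^{-7/6}$ on $(0,1]$ and vanishes on $[e,\infty)$; in particular $r\in E_{7/6,\beta}$ for every $\beta>1$ (I would use, say, $\beta=2$). I would then treat the regions $\{\omega<1\}$ and $\{\omega>1\}$ separately: near the origin the direct nonlinear estimates of \cref{propC} and \cref{propQ} already suffice, while near infinity one must exploit the exact cancellation $\mathcal C(\mathfrak f)=0$ (\cref{lem:KZisstationary}).

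Near $\omega=0$: since $g_0\in E_{0,7/6}$, \cref{propC} with $\alpha=0$, $\beta=7/6$ gives $\mathcal C(g_0)=\mathcal C(g_0,g_0,g_0)\in E_{0,3/2}$, hence $\sup_{0<\omega<1}|\mathcal C(g_0)(\omega)|<\infty$; and since $\mathcal C(\mathfrak f,g_0,g_0)=\tfrac13\mathcal Q(g_0,g_0)$, the estimate \eqref{bd:nonlinear:QHH} of \cref{propQ} with $H=g_0$ gives $\mathcal C(\mathfrak f,g_0,g_0)\in E_{7/6,3/2}$, hence $\sup_{0<\omega<1}\omega^{7/6}|\mathcal C(\mathfrak f,g_0,g_0)(\omega)|<\infty$. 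These are exactly the bounds near the origin required for membership in $E_{0,5/3}$, respectively $E_{7/6,5/3}$.

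Near $\omega=\infty$: by multilinearity and symmetry of $\mathcal C(\cdot,\cdot,\cdot)$ together with $\mathcal C(\mathfrak f,\mathfrak f,\mathfrak f)=\mathcal C(\mathfrak f)=0$, I would write
\begin{align*}
  \mathcal C(g_0) &= -3\,\mathcal C(\mathfrak f,\mathfrak f,r) + 3\,\mathcal C(\mathfrak f,r,r) - \mathcal C(r), \\
  \mathcal C(\mathfrak f,g_0,g_0) &= -2\,\mathcal C(\mathfrak f,\mathfrak f,r) + \mathcal C(\mathfrak f,r,r).
\end{align*}
Because $r$ is supported in $(0,e]$, the constraint $\omega_1+\omega_2=\omega_3+\omega_4$ forces $\mathcal C(r)=\mathcal C(r,r,r)$ to vanish for $\omega>2e$, while on $(1,2e]$ it is bounded by \cref{thm:collision-trilinear-estimate}; so it contributes nothing at infinity. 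For $\mathcal C(\mathfrak f,\mathfrak f,r)$ and $\mathcal C(\mathfrak f,r,r)$ I would apply \cref{thm:collision-trilinear-estimate} to each summand of the polarization \eqref{id:polarized}, with $\mathfrak f\in E_{7/6,7/6}$ and $r\in E_{7/6,2}$. Whatever the assignment of the three arguments to the slots $(f,g,h)$, the output exponent at infinity is $\beta'=\min(\beta_f+\beta_g+\beta_h-2,\ \beta_h+\tfrac12)$; here $\beta_f+\beta_g+\beta_h-2\ge \tfrac76+\tfrac76+2-2=\tfrac73$, and the slot $h$ can always be filled by an $\mathfrak f$, giving $\beta_h+\tfrac12=\tfrac53$, so $\beta'=\tfrac53$ in the worst case. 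Hence $\mathcal C(\mathfrak f,\mathfrak f,r)$ and $\mathcal C(\mathfrak f,r,r)$ decay like $\omega^{-5/3}$ at infinity, and so do $\mathcal C(g_0)$ and $\mathcal C(\mathfrak f,g_0,g_0)$; combined with the previous step this yields $\mathcal C(g_0)\in E_{0,5/3}$ and $\mathcal C(\mathfrak f,g_0,g_0)\in E_{7/6,5/3}$.

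The main obstacle is that the naive estimate --- applying \cref{thm:collision-trilinear-estimate} directly to $\mathcal C(g_0,g_0,g_0)$, with $g_0\in E_{0,7/6}$ --- gives decay only $\omega^{-3/2}$ at infinity, the binding term being $\beta_f+\beta_g+\beta_h-2=3\cdot\tfrac76-2=\tfrac32<\tfrac53$. The way around it is that $g_0$ coincides with the exact steady state $\mathfrak f$ for large $\omega$, so the ``defect'' $\mathcal C(g_0)$ is really controlled by the compactly supported $r=\mathfrak f-g_0$: replacing one factor $g_0$ by $r$ both removes a power-law tail and keeps the smallest of $(\omega_1,\dots,\omega_4)$ of order one while $\omega_1\to\infty$, so that the kernel $W=\min(\sqrt{\omega_1},\dots,\sqrt{\omega_4})/\sqrt{\omega_1}$ contributes an extra $\omega_1^{-1/2}$. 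This is exactly the alternative $\beta_h+\tfrac12$ in \cref{thm:collision-trilinear-estimate}, and it is where the improved exponent $\tfrac53=\tfrac76+\tfrac12$ comes from.
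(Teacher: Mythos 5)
Your proof is correct and follows essentially the same strategy as the paper: direct application of \cref{propC}/\cref{propQ} for small $\omega$, and for large $\omega$ the cancellation $\mathcal C(\mathfrak f)=0$ combined with the compactly supported difference $r=\mathfrak f-g_0$ (the paper's $g_1$), with the improved decay coming from the $\beta_h+\tfrac12$ alternative in \cref{thm:collision-trilinear-estimate}. The only cosmetic differences are that you expand the mixed terms in $(\mathfrak f,r)$ rather than $(g_0,g_1)$ and dispose of $\mathcal C(r)$ by a support argument instead of using $g_1\in E_{7/6,M}$ with $M$ large, neither of which changes the substance.
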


\begin{proof}

  We first deal with $\mathcal{C}(g_0)$. Notice that
  $g_0 \in E_{0,7/6}$, so that by applying \cref{propC} we
  get
  \begin{equation} \label{id:mnonlinearg03}
    \mathcal{C}(g_0)\in E_{0,3/2}.
  \end{equation}
  This shows that $\mathcal C(g_0)$ stays uniformly bounded for
  $\omega$ small. To deal with large frequencies $\omega \gg 1$, we
  let
  $$
  g_1 = \mathfrak{f} - g_0 \qquad \mbox{so that } \supp g_1 = [0,c], \mbox{ for some } c>0
  $$
  and we rely on the identity $\mathcal{C}(\mathfrak{f}) = 0$, whose
  proof is recalled in \cref{basiccancellation} to write
  \begin{equation} \label{id:mnonlinearg01}
    \mathcal{C}(g_0) = - \mathcal{C}(g_1) - 3 \mathcal{C}(g_0,g_1,g_1) - 3 \mathcal{C}(g_0,g_0,g_1) .
  \end{equation}
  Above, we have $g_1\in E_{7/6,M}$ where $M>0$ is any arbitrarily
  large constant, so that applying \cref{propC} to $g_1$, and then the
  estimates \eqref{bd:nonlinear:CG0G0H} and \eqref{bd:nonlinear:CG0HH}
  we obtain
  \begin{equation}\label{id:mnonlinearg02}
    \mathcal{C}(g_1)  \in E_{3/2,M}, \qquad  \mathcal{C}(g_0,g_1,g_1)\in E_{4/3,5/3}, \qquad  \mathcal{C}(g_0,g_0,g_1)\in E_{7/6,5/3}.
  \end{equation}
  Injecting \eqref{id:mnonlinearg02} in \eqref{id:mnonlinearg01} shows
  $\mathcal{C}(g_0)\in E_{3/2,5/3}$. Combining with
  \eqref{id:mnonlinearg03} we obtain
  $\mathcal{C}(g_0)\in E_{0,3/2}\cap E_{3/2,5/3}=E_{0,5/3}$. This is
  the first estimate of the proposition.

  We now deal with $\mathcal{C}(\mathfrak f, g_0,g_0)$ in a similar
  way. As $g_0\in E_{0,7/6}$, applying estimate
  \eqref{bd:nonlinear:QHH} gives
  \begin{equation} \label{id:mnonlinearg04}
    \mathcal{C}(\mathfrak f, g_0,g_0)\in E_{7/6,3/2}.
  \end{equation}
  Again, this is sufficient for $\omega$ small. For $\omega$ large we
  once more use the identity $\mathcal{C}(\mathfrak{f}) = 0$ to
  produce
  \begin{equation} \label{id:mnonlinearg05}
    \mathcal{C}(\mathfrak{f},g_0,g_0) = - 2 \mathcal{C}(\mathfrak{f},g_0,g_1) - \mathcal{C}(\mathfrak{f},g_1,g_1).
  \end{equation}
  We apply \eqref{bd:nonlinear:QG0H} and \eqref{bd:nonlinear:QHH} and get
  \begin{equation} \label{id:mnonlinearg06}
    \mathcal{C}(\mathfrak{f},g_0,g_1) \in E_{4/3,5/3}, \qquad
    \mathcal{C}(\mathfrak{f},g_1,g_1)\in E_{3/2,5/3}.
  \end{equation}
  Injecting \eqref{id:mnonlinearg06} in \eqref{id:mnonlinearg05} shows
  $ \mathcal{C}(\mathfrak{f},g_0,g_0) \in E_{3/2,5/3}$. Combined with
  \eqref{id:mnonlinearg04} this shows
  $\mathcal{C}(\mathfrak f, g_0,g_0)\in E_{7/6,5/3}$. This is the
  second estimate and ends the proof.
\end{proof}

\subsection{The basic cancellation}
\label{basiccancellation}

The KZ spectrum $\mathfrak f=\omega^{-7/6}$ is a stationary solution
of \eqref{KWE}. We give here a proof relying solely on the scaling
invariance of the equation and on the locality of the integrals of
collision and mass flux (in the spirit of the dimensional analysis
argument of \cite{ZLF} and
\cite[Prop.~2.41]{escobedo-velazquez-2015-schroedinger}), before
recalling the classical one.

\begin{lem}\label{lem:KZisstationary}
  For all $\omega>0$ one has $\mathcal C(\mathfrak f)(\omega)=0$.
\end{lem}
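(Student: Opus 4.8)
The plan is to give the two proofs announced just above. Since $1<7/6<5/4$, \cref{proplocal} ensures that $\mathfrak f=\omega^{-7/6}\in E_{7/6,7/6}$ is local, so $\mathcal C(\mathfrak f)(\omega)$ is defined by an absolutely convergent integral for every $\omega>0$, and the idea is to pin this function down using only scaling and the mass flux. First I would combine the two homogeneities at our disposal: the cubic operator satisfies $\mathcal C(\mu f)=\mu^3\mathcal C(f)$, while the scaling invariance of the equation reads $\mathcal C[f(\lambda\cdot)](\omega)=\lambda^{-2}\mathcal C[f](\lambda\omega)$. Since $\mathfrak f(\lambda\cdot)=\lambda^{-7/6}\mathfrak f$, applying these in turn gives $\lambda^{-7/2}\mathcal C(\mathfrak f)(\omega)=\mathcal C[\mathfrak f(\lambda\cdot)](\omega)=\lambda^{-2}\mathcal C(\mathfrak f)(\lambda\omega)$, whence $\mathcal C(\mathfrak f)(\omega)=c\,\omega^{-3/2}$ with $c:=\mathcal C(\mathfrak f)(1)$, as an exact pointwise identity.

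It remains to show $c=0$, which requires a second input, since scaling alone cannot distinguish the mass-cascade exponent from, e.g., the energy-cascade exponent $3/2$. Here I would invoke the mass flux: by \cref{lem:localflux} the integrals \eqref{def:J1}--\eqref{def:J4} defining $J_M(\mathfrak f)$ converge absolutely, and rescaling the integration variables shows $J_M(\mathfrak f)(\omega)$ is independent of $\omega$, equal to $-j_M^*$ as in \eqref{id:KZflux}. Because $\mathfrak f$ is local, the symmetrization and rearrangement leading to \eqref{id:massfluxduality} are legitimate for $f=\mathfrak f$, so for every $\varphi\in\mathcal C_0^\infty((0,\infty))$,
\[
  c\int_0^\infty\omega^{-1}\varphi\,\dd\omega=\int_0^\infty\sqrt{\omega}\,\mathcal C(\mathfrak f)\,\varphi\,\dd\omega=\int_0^\infty J_M(\mathfrak f)\,\varphi'\,\dd\omega=-j_M^*\int_0^\infty\varphi'\,\dd\omega=0 .
\]
Choosing $\varphi\ge 0$ not identically zero forces $c=0$, hence $\mathcal C(\mathfrak f)(\omega)=0$ for all $\omega>0$.

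For the classical proof I would instead carry out the Zakharov transformation: by homogeneity of $W$ and of $\mathfrak f$, $\mathcal C(\mathfrak f)(\omega_1)=\omega_1^{-3/2}I$ for a fixed integral $I$ over the normalized collision region, which one splits into the parts distinguished by which of $\omega_2,\omega_3,\omega_4$ plays a given role and on each of which one performs the conformal change of variables $\omega_i\mapsto\omega_1^2/\omega_i$ together with the appropriate transposition of the remaining variables. Each part is mapped to the first up to an explicit power of the ratios $\omega_i/\omega_1$, and summing the four contributions the integrand collapses to $0$ precisely because $7/6$ solves the resulting algebraic identity (the same computation produces the RJ exponents $0,1$ and the energy exponent $3/2$); this is the computation of \cite[Prop.~2.41]{escobedo-velazquez-2015-schroedinger}, valid here since $7/6<5/4$ ensures absolute convergence throughout. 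The one genuinely delicate point in the first proof is recognizing that scaling must be supplemented by the flux computation in order to exclude $c\neq 0$, together with checking that \eqref{id:massfluxduality} survives the passage from localized $f$ to $\mathfrak f$ --- which is exactly what the locality threshold $7/6<5/4$ buys; in the classical proof the only care needed is the bookkeeping of the change of variables over the subdomains and the attendant exponent arithmetic.
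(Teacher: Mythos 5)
Your proposal is correct and follows essentially the same route as the paper: locality of $\mathcal C(\mathfrak f)$ and of the flux integrals, scale invariance giving $J_M(\mathfrak f)(\omega)\equiv J_M(\mathfrak f)(1)$, and the duality identity \eqref{id:massfluxduality} forcing $\sqrt{\omega}\,\mathcal C(\mathfrak f)=0$ (your second sketch is the paper's separate recollection of the classical Zakharov argument). Your extra preliminary step, deriving $\mathcal C(\mathfrak f)(\omega)=c\,\omega^{-3/2}$ from homogeneity, is a harmless refinement that upgrades the paper's ``almost everywhere'' conclusion to the stated pointwise one.
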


\begin{proof}
  By \cref{proplocal} and \cref{lem:localflux}, we
  have for all $\omega>0$ that $\mathcal C(\mathfrak f)(\omega)$ as
  well as $J_M(\mathfrak f)(\omega)$ are well-defined, i.e. by
  convergent integrals. By rescaling the variables in the integrals in
  \eqref{def:JM}, we get that
  $J(\mathfrak f)(\omega)=J(\mathfrak f)(1)$ is actually independent
  of $\omega>0$. Hence by differentiating,
  $\partial_\omega J(\mathfrak f)=0$. Thus \eqref{id:massfluxduality}
  shows that \(\sqrt{\omega}\mathcal C(\mathfrak
  f)(\omega)=0\) almost everywhere.
\end{proof}

We recall now the classical conformal change of coordinates due to
Zakharov~\cite{zakharov-1972-langmuir}, which shows that
$\omega^{-7/6}$ and $\omega^{-3/2}$ are solutions of
$\mathcal{C}(f) = 0$. In the former case, the spectrum is local
(\cref{proplocal}), and the computation is rigorously
justified; in the latter case, it is merely formal.

We assume for the solution the form $f(\omega) = \omega^\alpha$ so
that the collision operator \eqref{collisionoperator} reads
\begin{equation*}
  \mathcal{C}(f)(\omega_1)
  = \iint_{\omega_2,\omega_3,\omega_4 \geq 0}
  W \omega_1^{\alpha} \omega_2^\alpha \omega_3^\alpha \omega_4^\alpha
  \left[ \omega_1^{-\alpha} + \omega_2^{-\alpha} - \omega_3^{-\alpha}
    - \omega_4^{-\alpha} \right]
  \, \dd\omega_3 \,\dd\omega_4.
\end{equation*}
Then we split the domain of the integral defining the collision
operator~\eqref{collisionoperator} into four subdomains as
\begin{equation*}
  \mathcal{C}(f)
  = \int_{\substack{\omega_2,\omega_3,\omega_4 \geq 0 \\
      \omega_1+\omega_2 = \omega_3 + \omega_4}}
  \dots \dd\omega_3 \, \dd\omega_4
  = \sum_{i=1}^4 \int_{\Delta_i} \dots \dd\omega_3 \, \dd\omega_4,
\end{equation*}
where
\begin{itemize}
\item $\Delta_1 = \{ (\omega_3,\omega_4) \; \mbox{such that} \; \omega_3 + \omega_4 \geq \omega_1, \; 0 \leq \omega_3 \leq \omega_1,  \; 0 \leq \omega_4 \leq \omega_1 \}$
\item $\Delta_2 = \{ (\omega_3,\omega_4) \; \mbox{such that} \; \omega_3 \geq \omega_1,  \; 0 \leq \omega_4 \leq \omega_1 \}$
\item $\Delta_3 = \{ (\omega_3,\omega_4) \; \mbox{such that} \; \omega_3 \geq \omega_1, \; \omega_4 \geq \omega_1 \}$
\item $\Delta_4 = \{ (\omega_3,\omega_4) \; \mbox{such that}  \; 0 \leq \omega_3 \leq \omega_1, \;  \omega_4 \geq \omega_1 \}$
\end{itemize}
which is illustrated in \cref{fig:conformal}.

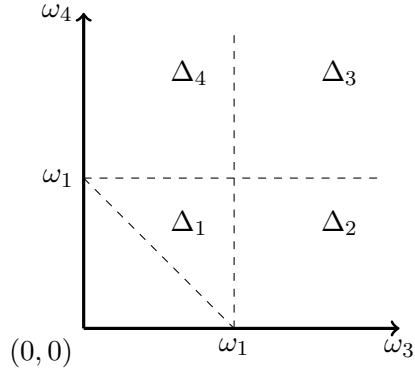
\begin{figure}[htb]
  \centering
  \begin{tikzpicture}[scale=2]
    \draw[very thick,->] (0,0) -- (2.1,0) node[anchor=north] {\(\omega_3\)};
    \draw[very thick,->] (0,0) -- (0,2.1) node[anchor=east]
    {\(\omega_4\)};
    \draw[dashed] (0,1) -- (1,0);
    \draw[dashed] (0,1) -- (2,1);
    \draw[dashed] (1,0) -- (1,2);
    \draw (0.7,0.7) node {\(\Delta_1\)};
    \draw (1.7,0.7) node {\(\Delta_2\)};
    \draw (1.7,1.7) node {\(\Delta_3\)};
    \draw (0.7,1.7) node {\(\Delta_4\)};
    \draw (0,0) node[anchor=north east] {\((0,0)\)};
    \draw (1,0) node[anchor=north] {\(\omega_1\)};
    \draw (0,1) node[anchor=east] {\(\omega_1\)};
  \end{tikzpicture}
  \caption{Illustration of the regions \(\Delta_1\), \(\Delta_2\),
    \(\Delta_3\), \(\Delta_4\) in the collision integral for
    \((\omega_3,\omega_4)\). Through an appropriate change of
    variables they will all mapped onto \(\Delta_1\) where they can
    cancel.}
  \label{fig:conformal}
\end{figure}

The region \(\Delta_2\) can be mapped to \(\Delta_1\) by the the change of variables
\begin{equation*}
  (\omega_1,\omega_2',\omega_3',\omega_4')
  = \frac{\omega_1}{\omega_3} (\omega_3,\omega_4,\omega_1,\omega_2).
\end{equation*}
The Jacobian is
\begin{equation*}
  \det \frac{\partial(\omega_3',\omega_4')}{\partial(\omega_3,\omega_4)}
  = - \frac{\omega_3^{'3}}{\omega_1^3}
\end{equation*}
and the new coordinates satisfy the conservation of momentum as
$\omega_1+ \omega_2' = \omega_3' + \omega_4'$.  By applying this
change of variables and omitting the primes, we thus find
\begin{equation*}
  \begin{aligned}
    \int_{\Delta_2}
    &W \omega_1^{\alpha} \omega_2^\alpha \omega_3^\alpha \omega_4^\alpha
    \left[ \omega_1^{-\alpha} + \omega_2^{-\alpha} - \omega_3^{-\alpha}
    - \omega_4^{-\alpha} \right]
    \,\dd\omega_3 \, \dd\omega_4 \\
    &= - \int_{\Delta_1}
      W \omega_1^{\alpha} \omega_2^\alpha \omega_3^\alpha
      \omega_4^\alpha
      \left[ \omega_1^{-\alpha} + \omega_2^{-\alpha} -
      \omega_3^{-\alpha} - \omega_4^{-\alpha} \right]
      \left( \frac{\omega_1}{\omega_3} \right)^{3\alpha + \frac{7}{2}}
      \,\dd\omega_3 \, \dd\omega_4.
  \end{aligned}
\end{equation*}

The region \(\Delta_3\) can be mapped to \(\Delta_1\) by the the
change of variables
\begin{equation*}
  (\omega_1,\omega_2',\omega_3',\omega_4')
  = \frac{\omega_1}{\omega_2} (\omega_2,\omega_1,\omega_3,\omega_4).
\end{equation*}
The Jacobian is
\begin{equation*}
  \det \frac{\partial(\omega_3',\omega_4')}{\partial(\omega_3,\omega_4)}
  = - \frac{\omega_2^{'3}}{\omega_1^3}
\end{equation*}
and the new coordinates satisfy the conservation of momentum as
$\omega_1+ \omega_2' = \omega_3' + \omega_4'$.  By applying this
change of variables and omitting the primes, we thus find
\begin{equation*}
  \begin{aligned}
    \int_{\Delta_3}
    &W \omega_1^{\alpha} \omega_2^\alpha \omega_3^\alpha \omega_4^\alpha
    \left[ \omega_1^{-\alpha} + \omega_2^{-\alpha} - \omega_3^{-\alpha}
    - \omega_4^{-\alpha} \right]
    \,\dd\omega_3 \, \dd\omega_4 \\
    &= \int_{\Delta_1}
      W \omega_1^{\alpha} \omega_2^\alpha \omega_3^\alpha
      \omega_4^\alpha
      \left[ \omega_1^{-\alpha} + \omega_2^{-\alpha} -
      \omega_3^{-\alpha} - \omega_4^{-\alpha} \right]
      \left( \frac{\omega_1}{\omega_2} \right)^{3\alpha + \frac{7}{2}}
      \,\dd\omega_3 \, \dd\omega_4.
  \end{aligned}
\end{equation*}

The integral over \(\Delta_4\) is symmetric to \(\Delta_2\). Hence
collecting the mappings we obtain that
\begin{align*}
  \mathcal{C}(f)
  & = \int_{\Delta_1}  W \omega_1^{\alpha} \omega_2^\alpha \omega_3^\alpha \omega_4^\alpha \left[ \omega_1^{-\alpha} + \omega_2^{-\alpha} - \omega_3^{-\alpha} - \omega_4^{-\alpha} \right] \\
  & \qquad \qquad\qquad \qquad \left[1+ \left( \frac{\omega_1}{\omega_2} \right)^{3\alpha + \frac{7}{2}} - \left( \frac{\omega_1}{\omega_3} \right)^{3\alpha + \frac{7}{2}} - \left( \frac{\omega_1}{\omega_4} \right)^{3\alpha + \frac{7}{2}}  \right] \,\dd\omega_3 \, \dd\omega_4.
\end{align*}
From this expression, one can immediately read off the values of
$\alpha$ for which $k^{-\alpha}$ is a (formal) stationary solution:
\begin{itemize}
\item Either $-\alpha = 0$ or $1$, which gives the RJ solutions
  $\alpha = 0$ and $\alpha = - 1$.
\item Or $-3\alpha - \frac{7}{2} = 0$ or
  $-3\alpha - \frac{7}{2} = -1$, which gives the KZ solution
  $\alpha = - \frac{7}{6}$ and $\alpha = - \frac{3}{2}$.
\end{itemize}

\section*{Acknowledgements}

CC is supported by the CY Initiative of Excellence Grant
``Investissements d'Avenir'' ANR-16-IDEX-0008. Part of this work was
done while CC was visiting the Courant Institute for Mathematical
Sciences that he would like to thank, as well as the Simons
Collaboration on Wave Turbulence for its financial support.

HD would like to thank the Isaac Newton Institute for Mathematical
Sciences, Cambridge, for support and hospitality during the programme.
This work was supported by EPSRC grant no EP/R014604/1 and a grant
from the Simons Foundations.  HD acknowledge the grant
ANR-18-CE40-0027 of the French National Research Agency (ANR).

PG is supported by the Simons collaborative grant on wave turbulence.

\bibliographystyle{abbrv}
\bibliography{lit}

\end{document}